\definecolor{light-gray}{gray}{0.7}
\DeclareMathAlphabet{\mathcalligra}{T1}{calligra}{m}{n}
\DeclareFontShape{T1}{calligra}{m}{n}{<->s*[1.5]callig15}{}
\newtheorem{theorem}{Theorem}[section]
\newtheorem{lemma}[theorem]{Lemma}
\newtheorem{proposition}[theorem]{Proposition}
\newtheorem{corollary}[theorem]{Corollary}
\theoremstyle{definition}
\newtheorem{definition}[theorem]{Definition}
\newtheorem{construction}[theorem]{Construction}
\newtheorem{example}[theorem]{Example}
\newtheorem{remark}[theorem]{Remark}
\newtheorem{theorem-definition}[theorem]{Theorem-Definition}
\newtheorem{lemma-definition}[theorem]{Lemma-Definition}
\newtheorem{variant}[theorem]{Variant}
\numberwithin{equation}{section}
\newcommand{\CC} {\mathbb{C}}
\newcommand{\RR} {\mathbb{R}}
\newcommand {\shQ} {\mathcal{Q}}
\newcommand {\shS} {\mathcal{S}}
\newcommand {\sE} {\mathscr{E}}
\newcommand {\sF} {\mathscr{F}}
\newcommand {\sG} {\mathscr{G}}
\newcommand {\sH} {\mathscr{H}}
\newcommand {\sO} {\mathscr{O}}
\newcommand {\bG} {\mathbf{G}}
\DeclareMathAlphabet{\mathcalligra}{T1}{calligra}{m}{n}
\DeclareFontShape{T1}{calligra}{m}{n}{<->s*[1.5]callig15}{}
\newcommand{\blank}{\underline{\hphantom{A}}}
\newcommand {\D} {\operatorname{D}}
\newcommand {\Ext} {\operatorname{Ext}}
\newcommand{\sExt}{\mathscr{E} \kern -1pt xt}
\newcommand {\Hom} {\operatorname{Hom}}
\newcommand {\sHom}{\mathscr{H}\kern-5pt\mathcalligra{om}}
\newcommand {\Spec} {\operatorname{Spec}}
\newcommand{\sTor}{\mathscr{T} \kern -3pt or}
\newcommand{\Quot} {\operatorname{Quot}}
\newcommand{\Hilb} {\operatorname{Hilb}}
\newcommand{\Dqc}{\mathrm{D}_{\mathrm{qc}}}
\newcommand{\Perf} {\mathrm{Perf}}
\newcommand{\cn}{\mathrm{cn}}
\newcommand{\Ind}{\operatorname{Ind}}
\newcommand{\Map}{\mathrm{Map}}
\newcommand{\CAlgDelta}{\operatorname{SCR}}
\newcommand{\Mod}{\operatorname{Mod}}
\newcommand {\perf}{\mathrm{perf}}
\newcommand*\bigcdot{\mathpalette\bigcdot@{.5}}
\newcommand*\bigcdot@[2]{\mathbin{\vcenter{\hbox{\scalebox{#2}{$\m@th#1\bullet$}}}}}
\newcommand*\bigdot{\mathpalette\bigdot@{.5}}
\newcommand*\bigdot@[2]{\mathbin{\hbox{\scalebox{#2}{$\m@th#1\bullet$}}}}
\title[]{Abel maps for integral curves via a derived perspective}
\author[]{Qingyuan Jiang}
\address{Department of Mathematics,
The Hong Kong University of Science and Technology, Clearwater Bay, Kowloon, Hong Kong.} 
\email{jiangqy@ust.hk}
\begin{document}
\maketitle

\begin{abstract}  
We develop a general framework for Abel maps associated with a family $X/S$ of integral curves using derived algebraic geometry.  For compactified Picard schemes, our approach yields relative quasi-smooth derived enhancements of the Quot schemes $\mathrm{Quot}_{\omega/X/S}^d$ and, in the Gorenstein case, of the Hilbert schemes of points $\mathrm{Hilb}_{X/S}^d$ on $X/S$. These constructions naturally generalize to higher rank torsion-free sheaves and their coherent systems.

We obtain unified semiorthogonal decompositions for the derived categories of these derived moduli spaces, broadly extending previous results for symmetric powers, varieties of linear series, and Thaddeus pairs to torsion-free sheaves on integral curves.

Central to our approach are two novel tools of independent interest: the $\mathcal{Q}$-complex, a derived generalization of Grothendieck's $Q$-module and the Altman–Kleiman $H$-module, and a derived theory of moduli of extensions that extends Lange's classical framework.
\end{abstract}

\tableofcontents

\section{Introduction}

Let $X \to S$ be a flat, projective map of locally Noetherian schemes, whose geometric fibers are integral curves of arithmetic genus $p_a \ge 1$.  For simplicity, we assume that the smooth locus of $X \to S$ admits a section $x \colon S \to X$.

The (rigidified) Picard functor $\mathrm{Pic}_{X/S}$ assigns to each $S$-scheme $T$ the isomorphism classes
\[
\mathrm{Pic}_{X/S}(T) = \left\{ (\mathscr{L}, u) \mid \text{$\mathscr{L}$ is a line bundle on $X_T: = X \times_S T$},\ u \colon \mathscr{O}_T \xrightarrow{\simeq} x_T^* (\mathscr{L}) \right\} / \simeq.
\]
This functor is represented by a disjoint union $\coprod_{d \in \mathbb{Z}} \mathrm{Pic}_{X/S}^d$ of $S$-schemes, where each $\mathrm{Pic}_{X/S}^d$ parametrizes line bundles of degree $d$ on the fibers of $X/S$ (\cite{Gro,Kl05}). However, the Picard scheme $\mathrm{Pic}_{X/S}^d$ is generally not proper over $S$ unless $X/S$ is smooth.

Altman and Kleiman \cite{AK79,AK80} constructed a compactification $\overline{\mathrm{Pic}}_{X/S}$ of $\mathrm{Pic}_{X/S}$, which assigns to each $S$-scheme $T$ the set of isomorphism classes
\[
\overline{\mathrm{Pic}}_{X/S}(T) = \left\{ (\mathscr{I}, u) \mid
\begin{array}{l}
\text{$\mathscr{I}$ is a $T$-flat coherent sheaf on $X_T$ that is rank $1$,}\\
\text{torsion-free on fibers of $X_T/T$,}~u \colon \mathscr{O}_T \xrightarrow{\simeq} x_T^*(\mathscr{I})
\end{array}
\right\} / \simeq.
\]
By \cite[Thm. 3.4(3)]{AK79} and \cite[Thm. 8.5]{AK80}, this functor is represented by a disjoint union $\coprod_{d \in \mathbb{Z}} \overline{\mathrm{Pic}}_{X/S}^d$ of projective $S$-schemes, where each $\overline{\mathrm{Pic}}_{X/S}^d$ parametrizes rank $1$, torsion-free sheaves of degree $d$ on the fibers of $X/S$. Each $\overline{\mathrm{Pic}}_{X/S}^d$ contains $\mathrm{Pic}_{X/S}^d$ as an open subscheme.

A key tool in studying these compactified Picard schemes is the \emph{Abel map} \cite[(8.2)]{AK80}
\[
A_\omega^d \colon \Quot_{\omega/X/S}^{2p_a-2-d} \to \overline{\mathrm{Pic}}_{X/S}^{d}.
\]
Here, for an integer $m$, $\Quot_{\omega/X/S}^{m}$ denotes the Quot scheme parametrizing degree $m$ torsion quotients of the relative dualizing sheaf $\omega$ on fibers of $X/S$. Concretely, for any $S$-scheme $T$,
\[
\Quot_{\omega/X/S}^{m}(T) = \left\{ \omega_{X_T/T} \twoheadrightarrow \mathscr{G} \mid \mathscr{G} \text{ is $T$-flat, rank $0$, degree $m$ on fibers of $X_T/T$} \right\} / \simeq,
\]
and the Abel map sends such a quotient to its kernel. The fiber of $A_\omega^d$ over a point $[I] \in  \overline{\mathrm{Pic}}^{d}(X_s)$ is the projective space $\mathbb{P}(\operatorname{Hom}_{X_s}(I, \omega_{X_s})^\vee) \simeq \mathbb{P}(H^1(X_s, I))$, where $X_s:=X \times_S \{s\}$.

It is often useful to consider the ``modified" Abel map (\cite[Definition 5.0.8]{Kas13})
\[
A^d \colon \Quot_{\omega/X/S}^d \to \overline{\mathrm{Pic}}_{X/S}^d,
\]
obtained by composing $A_\omega^{2p_a-2-d}$ with the duality isomorphism $\sHom(-, \omega)\colon \overline{\mathrm{Pic}}_{X/S}^{2p_a-2-d} \xrightarrow{\sim} \overline{\mathrm{Pic}}_{X/S}^d$. The fiber of $A^d$ over a point $[I] \in \overline{\mathrm{Pic}}^d(X_s)$ is the projective space $\mathbb{P}(H^0(X_s, I)^\vee)$.


\begin{theorem}[Theorem \ref{thm:SOD:curve}]
\label{thm:intro:SOD:curve}
There are natural derived enhancements of the Abel maps,
\[
\mathbf{A}^d \colon \mathbf{Quot}_{\omega/X/S}^d \to \overline{\mathrm{Pic}}_{X/S}^d, \qquad
\mathbf{A}_\omega^d \colon \mathbf{Quot}_{\omega/X/S}^{2p_a-2-d} \to \overline{\mathrm{Pic}}_{X/S}^{d},
\]
which are quasi-smooth (i.e., locally complete intersection in the derived sense) for all $d \in \mathbb{Z}$. Furthermore, for every integer $d \geq p_a - 1$, there is a semiorthogonal decomposition
\[
\mathrm{D}(\mathbf{Quot}_{\omega/X/S}^d) = \left\langle \mathrm{D}(\mathbf{Quot}_{\omega/X/S}^{2p_a-2-d}),\ \text{$(1-p_a+d)$ copies of } \mathrm{D}(\overline{\mathrm{Pic}}_{X/S}^d) \right\rangle,
\]
where $\mathrm{D}$ denotes any of the standard derived categories: $\Dqc$, $\mathrm{D}^-_{\mathrm{coh}}$, $\mathrm{D}^\mathrm{b}_{\mathrm{coh}}$, or $\mathrm{Perf}$.
\end{theorem}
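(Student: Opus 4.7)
The plan is to realize both derived Quot schemes as derived projectivizations (in the sense developed earlier in the paper) of a single perfect complex on $\overline{\Pic}^d_{X/S}$, namely the relevant $\mathcal{Q}$-complex, and then invoke the general semiorthogonal decomposition theorem for derived projectivizations of perfect complexes of Tor-amplitude $[0,1]$. Let $\mathscr{I}$ denote the universal rank-one torsion-free sheaf on $X \times_S \overline{\Pic}^d_{X/S}$, and let $\pi$ be the projection to $\overline{\Pic}^d_{X/S}$. The relevant $\mathcal{Q}$-complex is $\mathcal{Q} := R\pi_* \mathscr{I}$, a perfect complex of Tor-amplitude $[0,1]$ with virtual rank $r := 1 - p_a + d$ by relative Riemann--Roch.

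The first step is to identify $\mathbf{Quot}^d_{\omega/X/S}$ with the derived projectivization $\mathbb{P}_{\overline{\Pic}^d_{X/S}}(\mathcal{Q})$, with $\mathbf{A}^d$ as its structure map to $\overline{\Pic}^d_{X/S}$. Over a classical point $[I]$, the classical fiber is $\mathbb{P}(H^0(X_s, I)^\vee)$, matching the fiber description of the classical $A^d$ recorded in the introduction; the universal property of the derived $\mathcal{Q}$-complex, a derived generalization of Grothendieck's $Q$-module and the Altman--Kleiman $H$-module established earlier in the paper, should promote this fiberwise identification to a canonical equivalence of derived $\overline{\Pic}^d_{X/S}$-stacks. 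Relative Serre--Grothendieck duality on the family $X/S$ of integral curves yields $R\pi_* \sHom(\mathscr{I}, \omega_{X/S}) \simeq \mathcal{Q}^\vee[-1]$; here the rank-one torsion-free hypothesis on $\mathscr{I}$ is crucial, since it ensures that $\sHom(\mathscr{I}, \omega_{X/S})$ is a line bundle, concentrated in degree zero in families. Combined with the duality isomorphism $\sHom(-, \omega) \colon \overline{\Pic}^{2p_a-2-d}_{X/S} \xrightarrow{\sim} \overline{\Pic}^d_{X/S}$, this identifies $\mathbf{Quot}^{2p_a-2-d}_{\omega/X/S}$ with the dual derived projectivization of $\mathcal{Q}$ over $\overline{\Pic}^d_{X/S}$.

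Quasi-smoothness of $\mathbf{A}^d$ and $\mathbf{A}_\omega^d$ is then automatic from the general principle that derived projectivizations of perfect complexes of Tor-amplitude $[0,1]$ are quasi-smooth over the base; locally, representing $\mathcal{Q}$ by a two-term complex $[\mathscr{E}^{-1} \to \mathscr{E}^0]$ of locally free sheaves realizes $\mathbb{P}(\mathcal{Q})$ as the derived zero locus of a regular section in the classical projective bundle associated to $\mathscr{E}^0$. With $r \geq 0$ under the hypothesis $d \geq p_a - 1$, the general SOD theorem for derived projectivizations of perfect complexes of Tor-amplitude $[0,1]$ yields an SOD of $\D(\mathbb{P}(\mathcal{Q}))$ into the dual derived projectivization together with $r = 1 - p_a + d$ copies of $\D(\overline{\Pic}^d_{X/S})$; this is precisely the claimed decomposition after substituting the identifications from the previous step.

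The principal obstacle is the identification step itself: rigorously matching the derived moduli-theoretic definition of $\mathbf{Quot}^d_{\omega/X/S}$ with $\mathbb{P}_{\overline{\Pic}^d_{X/S}}(\mathcal{Q})$ over the typically singular, non-Gorenstein base $\overline{\Pic}^d_{X/S}$, and checking that its classical truncation reproduces $\Quot^d_\omega$ as a functor of points. This is exactly the content of the universal property of the $\mathcal{Q}$-complex, whose construction and compatibility with both dualizing sheaves and Abel-map structures is one of the central technical contributions of the paper.
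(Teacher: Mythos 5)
Your proposal follows essentially the same route as the paper: realize $\mathbf{Quot}^d_{\omega/X/S}$ and $\mathbf{Quot}^{2p_a-2-d}_{\omega/X/S}$ as derived projectivizations over $\overline{\Pic}^d_{X/S}$ of $\mathcal{Q}$-complexes, use the duality theory of the $\mathcal{Q}$-complex (relative Serre duality on the universal family, plus the coherent-duality involution $\iota$ of the compactified Picard scheme) to relate the two, and then apply the general semiorthogonal decomposition theorem for derived projectivizations of perfect complexes of Tor-amplitude $[0,1]$; quasi-smoothness follows from the same general theory. Two small corrections are worth noting. First, the connective perfect complex that one projectivizes is the $\mathcal{Q}$-complex $\mathcal{Q}_f(\mathscr{O}_{\mathscr{X}},\mathscr{J})\simeq(R\pi_*\mathscr{J})^\vee$, not $R\pi_*\mathscr{I}$ itself; taken literally, your $\mathcal{Q}:=R\pi_*\mathscr{I}$ is the dual of the right object and is not connective, so $\mathbf{P}(\mathcal{Q})$ in Grothendieck's quotient convention would parametrize the wrong data (your own two-term local model $[\mathscr{E}^{-1}\to\mathscr{E}^0]$ is already the model for the dual, so this is a notational slip rather than a conceptual one). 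Second, $\sHom(\mathscr{I},\omega)$ is a rank-one torsion-free sheaf, not a line bundle in general; the crucial point is, as you correctly emphasize, its concentration in degree zero, which holds because rank-one torsion-free sheaves on integral Cohen--Macaulay curves are maximal Cohen--Macaulay.
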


This theorem provides a categorification of the fiberwise Riemann--Roch formula:
\[
\dim_{\kappa(s)} H^0(X_s, I) - \dim_{\kappa(s)} H^1(X_s, I) = 1 - p_a + d,
\]
where $I \in \overline{\mathrm{Pic}}^d(X_s)$ is a rank-one, torsion-free sheaf of degree $d$ on the curve $X_s$.

Our strategy in \S\ref{subsec:Abel:cPic} also extends to suitable derived enhancements of $\overline{\mathrm{Pic}}^d(X_s)$.

It is worth noting that generally the compactified Picard schemes $\overline{\mathrm{Pic}}^d(X_s)$ can be highly singular, even when endowed with their natural derived structures. Nevertheless, the derived Abel maps constructed here are always quasi-smooth, and the theorem applies even when neither the source nor the target of the Abel map is quasi-smooth.

Several notable cases of the theorem include:
\begin{enumerate}
    \item \emph{Gorenstein case  (\S \ref{subsec:Abel:Gorenstein}):} If all the geometric fibers $X_s$ of $X/S$ are Gorenstein,  the relative dualizing sheaf $\omega$ is invertible, and tensoring with $\omega$ gives a canonical isomorphism
    \[
    \Hilb_{X/S}^d \xrightarrow{\sim} \Quot_{\omega/X/S}^d.
    \]
For clarity, we switch the upper and lower indices from the Quot scheme case and define $A_d = A^d \circ (\underline{\hphantom{A}} \otimes \omega)$ and $A^\omega_d = A^d_\omega \circ (\underline{\hphantom{A}} \otimes \omega)$. The Abel maps then become
\begin{align*}
A_d   \colon \Hilb_{X/S}^d \to \overline{\mathrm{Pic}}_{X/S}^d, \qquad
A^\omega_d \colon \Hilb_{X/S}^{2p_a-2-d} \to \overline{\mathrm{Pic}}_{X/S}^{d},
\end{align*}
where a zero-dimensional subscheme $Z \subseteq X_s$ of length $d$ (respectively, length $2p_a-2-d$) is sent to the sheaf $\sHom(I_Z, \mathscr{O}_{X_s})$ (respectively, $I_Z \otimes \omega$), with $I_Z$ the ideal sheaf of $Z$. 

 The fiber of the Abel map $A_d$ over a point $J \in \overline{\mathrm{Pic}}^d(X_s)$ is the projective space $\mathbb{P}(H^0(X_s, J)^\vee)$, which coincides with the complete linear system $|J|$ of {\em generalized divisors} of the rank-$1$, torsion-free sheaf $J$, studied by Hartshorne \cite{Har86, Har94, Har07}.

    In this case, our theorem provides natural derived enhancements $\mathbf{Hilb}_{X/S}^d$ of the classical Hilbert schemes $\Hilb_{X/S}^d$ for all $d$, and tensoring with $\omega$ induces an isomorphism of derived stacks $\mathbf{Hilb}_{X/S}^d \xrightarrow{\sim} \mathbf{Quot}_{\omega/X/S}^d$. \footnote{Note that we define $\mathbf{Hilb}_{X/S}^d$ to be $\mathbf{Quot}_{\mathscr{O}_X/X/S}^d$, in line with the classical construction of \cite{AK80}. This differs from the approach in \cite{CK01, CK02}, where $\mathbb{R}\mathrm{Hilb}$ and $\mathbb{R}\mathrm{Quot}$ generally have distinct derived structures.} 
    The semiorthogonal decomposition of Theorem~\ref{thm:intro:SOD:curve} holds verbatim after replacing all $\mathbf{Quot}^d_{\omega/X/S}$ by $\mathbf{Hilb}^d_{X/S}$ (see Corollary~\ref{cor:Abel:Gorenstein}). Notably, while the classical Hilbert schemes can be highly singular, the derived schemes $\mathbf{Hilb}_{X/S}^d$ are always quasi-smooth over $\overline{\mathrm{Pic}}_{X/S}^d$.

    \item \emph{Locally planar case (\S \ref{subsec:Abel:planar}):} If $X/S$ is a family of locally planar curves, then the derived Hilbert schemes are classical (Lemma \ref{lem:planar:classical}):
    \[
    \mathbf{Hilb}_{X/S}^d = \Hilb_{X/S}^d,
    \]
    and the theorem gives a semiorthogonal decomposition for all $d \geq p_a - 1$:
    \[
    \mathrm{D}(\Hilb_{X/S}^d) = \left\langle \mathrm{D}(\Hilb_{X/S}^{2p_a-2-d}),\ \text{$(d-p_a+1)$ copies of } \mathrm{D}(\overline{\mathrm{Pic}}_{X/S}^d) \right\rangle.
    \]
    This greatly extends the semiorthogonal decompositions for symmetric powers of smooth complex projective curves due to Toda~\cite[Corollary~5.11]{Tod2} (see also \cite[Corollary~3.10]{JL18}, \cite{BK19}, and \cite[Example~3.33]{J22a}). 
    \end{enumerate}

   As discussed in \cite{J23}, the above semiorthogonal decomposition can be regarded as a categorification of the Beilinson--Bernstein--Deligne--Gabber decomposition theorem for the map $A^d$ in this context.    
    In this sense, our result provides a coherent counterpart to the generalized Macdonald formula of Maulik--Yun \cite{MY} and Migliorini--Shende \cite{MS} (see also \cite{Ren}).
    
    Notably, in contrast to previous approaches in the constructible setting \cite{MY, MS, MT}, which typically rely on the smoothing techniques of \cite{She12}, our use of derived techniques enables us to work directly with singular families $X/S$ without smoothing.
    
When $X/S$ is a Gorenstein integral curve $C/\mathbb{C}$ lying on a Calabi--Yau 3-fold, Theorem~\ref{thm:intro:SOD:curve} can be viewed as a categorification of the wall-crossing formula of Pandharipande and Thomas~\cite{PT}:
\begin{align*}
    P_{m,C} - P_{-m,C} = (-1)^{m-1} m N_{1,C},
\end{align*}
where $P_{\pm m,C}$ and $N_{1,C}$ denote the local stable pairs and local DT invariants, respectively. A categorification of this formula in the context of categorical DT theory has been obtained by Toda~\cite[Theorem 5.1.9, Example 5.1.11]{Tod24}.

We also note that families of integral curves and their compactified Picard schemes arise naturally in various contexts, including the study of moduli spaces of torsion sheaves on K3 surfaces \cite{ADM} and  in Hitchin fibrations for Higgs bundles \cite{MS24, MSY25, HMMS, PT24}.

Finally, we expect the results of the theorem to be optimal: for a smooth projective curve $C/\mathbb{C}$, Lin~\cite{Lin} shows that $\mathrm{D}(\operatorname{Sym}^d(C))$ is indecomposable for $d \leq g(C) - 1$. Similarly, we anticipate that $\mathrm{D}(\mathbf{Quot}_{\omega/X/S}^d)$ is indecomposable over $S$ for $d \leq p_a- 1$.

\subsection{Abel maps, torsion-free sheaves and coherent systems}
To prove the Theorem \ref{thm:intro:SOD:curve}, we introduce a general framework for Abel maps associated to families of integral curves $X/S$ (\S\ref{subsec:Abel:general}). Specifically, for any suitable derived moduli stack $\mathscr{M}$, we define Abel maps and establish corresponding semiorthogonal decompositions (Theorems~\ref{thm:SOD:abel:LinSys}\& \ref{thm:SOD:abel:GLinSys}).

This generality allows us to treat not only compactified Picard schemes, but also higher rank torsion-free sheaves, and \emph{coherent systems} (i.e., Brill--Noether pairs in the sense of~\cite{KN95}). In particular, our framework encompasses generalizations of the classical varieties of linear series $G_d^\ell$ studied in ~\cite[Chapter~IV]{ACGH} (see Corollary~\ref{cor:SOD:Gdl}), as well as extensions of the moduli of Thaddeus pairs~\cite{Tha} from vector bundles to torsion-free sheaves.

For example, when $\mathscr{M} = \underline{\mathrm{Coh}}_{X/S}^{\mathrm{tf}}(\rho, d)$, the moduli stack of torsion-free sheaves of rank $\rho$ and degree $d$ on $X/S$, we obtain derived enhancements of the moduli spaces $G(\rho, d, \ell)$ of $\ell$-dimensional coherent systems considered in \cite{KN95}. The Abel maps for these systems are
\begin{align*}
\mathbf{A}^{\rho, d, \ell} \colon  \mathbf{G}_{X/S}(\rho, d, \ell) &\to  \underline{\mathrm{Coh}}_{X/S}^{\mathrm{tf}}(\rho, d), \\
\mathbf{A}^{\rho, d, \ell}_\omega \colon  \mathbf{G}^\omega_{X/S}(\rho, d, \ell) &\to  \underline{\mathrm{Coh}}_{X/S}^{\mathrm{tf}}(\rho, d),
\end{align*}
where, for a morphism $s \colon \Spec\kappa(s) \to S$, the $s$-points of $  \mathbf{G}_{X/S}(\rho, d, \ell)$ and $\mathbf{G}^\omega_{X/S}(\rho, d, \ell)$  parametrize coherent systems $(E, V)$ and $(E, W)$: here $E$ is a torsion-free sheaf of rank $\rho$ and degree $d$ on $X_s$, and $V \subseteq H^0(X_s, E)$, $W \subseteq H^1(X_s, E)^\vee$ are linear subspaces of dimension~$\ell$. The Abel maps simply record the underlying torsion-free sheaf $E$ of each coherent system.

\begin{theorem}[Corollary~\ref{cor:SOD:coh.sys}]
\label{thm:intro:SOD:coh.sys}
The Abel maps $\mathbf{A}^{\rho, d, \ell}$ and $\mathbf{A}^{\rho, d, \ell}_\omega$ are quasi-smooth. If $\ell = 1$ or if $S$ is a $\mathbb{Q}$-scheme, then for all $d \geq (p_a - 1)\rho$, there is an $\mathscr{M}$-linear semiorthogonal decomposition:
\[
\D\left(\mathbf{G}_{X/S}(\rho, d, \ell)\right) =
\left\langle
    \text{$\binom{(1-p_a)\rho+d}{j}$ copies of }
    \D\left(\mathbf{G}^\omega_{X/S}(\rho, d, \ell-j)\right)
    \,\Big|\, 0 \leq j \leq \ell
\right\rangle.
\]
\end{theorem}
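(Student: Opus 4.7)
The proof is a direct application of the general semiorthogonal decomposition theorems \ref{thm:SOD:abel:LinSys} and \ref{thm:SOD:abel:GLinSys} to the moduli stack $\mathscr{M} = \underline{\mathrm{Coh}}_{X/S}^{\mathrm{tf}}(\rho, d)$. My plan is first to verify that $\mathscr{M}$ fits the general framework: it carries a universal torsion-free sheaf $\shE$ on $X \times_S \mathscr{M}$, and since the fibers of $X/S$ are integral curves of arithmetic genus $p_a$, the derived pushforward $\mathbf{R}\pi_*\shE$ (with $\pi$ the projection onto $\mathscr{M}$) is a perfect complex of tor-amplitude $[0,1]$. By Riemann--Roch, its virtual rank is $\chi = d + \rho(1-p_a) = (1-p_a)\rho + d$, which is exactly the upper index in the binomial coefficient of the statement.

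Next I would identify the derived Abel sources as derived Grassmannians of the $\mathcal{Q}$-complex. Concretely, $\mathbf{G}_{X/S}(\rho, d, \ell) \simeq \mathbf{Gr}_{\mathscr{M}}(\ell, \mathbf{R}\pi_*\shE)$ by matching the moduli-of-coherent-systems functor with the defining functor of the derived Grassmannian at the level of $T$-points. For the $\omega$-side, I would use Serre duality $H^1(X_s, E)^\vee \cong \Hom(E, \omega_{X_s})$ together with the derived theory of moduli of extensions developed in the paper to identify $\mathbf{G}^\omega_{X/S}(\rho, d, \ell)$ with the Grassmannian of the Serre-dual shifted complex. Once these identifications are in place, quasi-smoothness of both Abel maps is automatic from the quasi-smoothness of derived Grassmannians of perfect two-term complexes over a base.

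The semiorthogonal decomposition then follows by feeding this data into the general theorems: the hypothesis $d \geq (p_a - 1)\rho$ ensures $\chi \geq 0$, placing us in the regime where the SOD takes the stated form, with $\mathscr{M}$-linear summands indexed by $0 \leq j \leq \ell$ and multiplicities $\binom{\chi}{j}$ equal to the rank of $\Lambda^j$ of a virtual rank-$\chi$ bundle. The $\ell = 1$ case reduces to a projective-bundle formula (Theorem \ref{thm:SOD:abel:LinSys}), valid in arbitrary characteristic; the general $\ell$ case invokes the Schur functor machinery of Theorem \ref{thm:SOD:abel:GLinSys}, which is the source of the $\mathbb{Q}$-scheme hypothesis. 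As a consistency check, Vandermonde's identity $\sum_j \binom{\chi}{j}\binom{h^1}{\ell-j} = \binom{h^0}{\ell}$ confirms that the total number of Schubert pieces on fibers matches the classical Grassmannian decomposition of $\mathrm{Gr}(\ell, H^0(X_s, E))$.

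The main obstacle is the compatibility in the second step: one must check that the modular interpretation of $\mathbf{G}^\omega_{X/S}(\rho, d, \ell)$ as pairs $(E, W)$ with $W \subseteq H^1(X_s, E)^\vee$ agrees, at the derived level, with the Grassmannian of the Serre-dual complex, rather than merely on classical truncations. This is precisely where the derived moduli of extensions enters, supplying the universal extension whose classifying data recovers $W$ and ensuring that the two constructions coincide even when the underlying sheaves and moduli are highly singular. With this compatibility established, the rest of the argument is a formal unpacking of the general SOD theorems.
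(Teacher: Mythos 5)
Your proposal is correct and follows essentially the same route as the paper: apply the general semiorthogonal decomposition theorems (Theorem~\ref{thm:SOD:abel:LinSys} for $\ell=1$, Theorems~\ref{thm:SOD:GExt}/\ref{thm:SOD:abel:GLinSys} for general $\ell$ in characteristic zero) to $\mathscr{M}=\underline{\mathrm{Coh}}_{X/S}^{\mathrm{tf}}(\rho,d)$ with universal sheaf $\mathscr{E}$, using relative Serre duality to handle the $\omega$-side. One small caveat: the Grassmannian ``identification'' you flag as the main obstacle is in fact a definition in the paper, namely $\mathbf{G}_{X/S}(\rho,d,\ell):=\mathbf{GExt}_f^0(\mathscr{O}_X,\mathscr{E};\ell)=\mathbf{Grass}_{\mathscr{M}}(\mathcal{Q}_f(\mathscr{O}_X,\mathscr{E});\ell)$, and under the paper's Grothendieck quotient convention this is the derived Grassmannian of the connective $\mathcal{Q}$-complex $\mathcal{Q}_f(\mathscr{O}_X,\mathscr{E})\simeq (\mathbf{R}\pi_*\mathscr{E})^\vee$ rather than of $\mathbf{R}\pi_*\mathscr{E}$ itself, so your phrasing ``$\mathbf{Gr}_{\mathscr{M}}(\ell,\mathbf{R}\pi_*\mathscr{E})$'' should be read as the dual/subspace Grassmannian to match; likewise the Serre-duality compatibility you worry about is supplied directly by Proposition~\ref{prop:Serre:Q}, so no extra argument is needed there.
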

The $\mathscr{M}$-linearity ensures that the semiorthogonal decomposition is preserved under base change along any morphism to $\mathscr{M}$.  This, in particular, allows one to restrict the decomposition to open substacks such as $\mathcal{B}un_{X/S}(\rho, d)$, the moduli stack of vector bundles, as well as to any other substacks of interest.

Our results considerably extend the range of known semiorthogonal decompositions for moduli spaces associated with curves. In particular, this framework generalizes earlier results for symmetric powers of smooth curves~\cite{Tod2, JL18, BK19, J22a, KT} and incorporates recent developments for varieties of linear series on general curves~\cite{J23, Tod23}. In the case where $\ell = 1$, the theorem above also extends a result of Koseki and Toda \cite{KT} on semiorthogonal decomposition for moduli of Thaddeus pairs for stable vector bundles on smooth curves to the context of torsion-free sheaves on integral curves.

We also emphasize that, although the main focus of this paper is on the case of curves, the general methodology of this paper applies more broadly, including to stable pair moduli spaces, surfaces, two-dimensional categories, and Calabi--Yau families; see \S\ref{subsec:SOD:Ext} for further details.

\subsection{The $\mathcal{Q}$-complex and moduli of extensions}

We work within the framework of derived algebraic geometry, as developed by Lurie, To{\"e}n, Vezzosi, and others~\cite{DAG, HTT, SAG, ToenDAG, GRI}. A central role in our approach is played by the theory of derived projectivizations and Grassmannians, introduced by the author in~\cite{J22a, J22b, J23}.

Our methodology is based on two key technical ingredients:
\begin{enumerate}
    \item The $\mathcal{Q}$-complex, developed in Section~\ref{sec:Q-complex}, which is a derived generalization of  Grothendieck's $Q$-module~\cite[III$_2$~\S7.7]{EGA} and the Altman--Kleiman $H$-module~\cite[(1.1)]{AK80}.
    \item A derived theory of moduli of extensions, presented in Section~\ref{sec:extension}, extending Lange's classical theory~\cite{Lag83} to the derived context.
\end{enumerate}

Specifically, in Section~\ref{sec:Q-complex}, for a morphism $f \colon \mathscr{X} \to \mathscr{M}$ of derived stacks and complexes $\mathscr{E}, \mathscr{F} \in \Dqc(\mathscr{X})$, we construct, under mild assumptions, a complex $\shQ_f(\mathscr{E}, \mathscr{F})$ that serves as a dual to the pushforward $f_* \sHom(\mathscr{E}, \mathscr{F})$, without assuming perfectness of $\mathscr{F}$ or finite Tor-amplitude for $f$. The formation of $\shQ_f(\mathscr{E}, \mathscr{F})$ is compatible with base change, and we provide fiberwise criteria for its Tor-amplitude (Corollary~\ref{cor:Ext:fiberwise}).

The $\mathcal{Q}$-complex is a versatile tool with broad applicability and independent interest. For example, it yields an explicit description of the cotangent complex of the derived moduli stack of coherent sheaves $\underline{\mathbf{Coh}}(X)$ on a proper scheme $X$ (Lemma~\ref{lem:cotangent:Coh}), complementing the foundational existence result of Porta--Sala~\cite[Lemma 2.21]{PS}.

In Section~\ref{sec:extension}, we use the $\mathcal{Q}$-complex, together with derived projectivizations and Grassmannians, to develop a natural framework for moduli stacks of extensions. Specifically, given $f \colon \mathscr{X} \to \mathscr{M}$ and $\mathscr{E}, \mathscr{F} \in \Dqc(\mathscr{X})$ as above, we construct derived moduli stacks $\mathbf{PExt}_f^i(\mathscr{E}, \mathscr{F})$ parametrizing non-split $i$th extensions of $\mathscr{E}$ by $\mathscr{F}$ over $\mathscr{M}$ (\S \ref{subsec:PExt}), as well as Grassmannian-type moduli stacks of extensions $\mathbf{GExt}_f^i(\mathscr{E}, \mathscr{F}; \ell)$ (\S \ref{subsec:GExt}).

These constructions are especially useful in Brill--Noether contexts (see \S\ref{sec:BN}), extending the Hecke moduli spaces studied in \cite{Neg19, Neg22, J22b}. 

In \S\ref{subsec:SOD:Ext}, we further establish semiorthogonal decompositions for these moduli stacks of extensions (Theorems~\ref{thm:SOD:PExt} and~\ref{thm:SOD:GExt}).

\subsection{Related works}
The study of compactified Picard schemes and Abel maps has a long and rich history; see \cite{AIK77, AK79, AK80, D'so79, Reg80, Kas13} and references therein for background. Our approach in the derived setting parallels many of the foundational arguments of Altman and Kleiman~\cite{AK79, AK80}, particularly their analysis of the module $H(I,F)$ and the study of moduli of linear systems.

Coherent systems (Brill--Noether pairs) for vector bundles and torsion-free sheaves on curves have also been widely studied; see, for instance, \cite{BG91, Ber94, LePot93, Tha, KN95, Bra09, BGMN03, Bho09} for further references.

Our derived approach builds on the theory of projectivizations and Grassmannians of complexes, developed extensively by the author in \cite{JL18, J20, J21, J22a, J22b, J19, J23}. The current work significantly broadens the applicability of these results.

In their seminal works \cite{CK01, CK02}, Ciocan-Fontanine and Kapranov introduced derived Quot and Hilbert schemes. Our approach utilizing derived projectivizations differs from theirs. Nevertheless, we expect that for suitable derived structures on the compactified Picard schemes, our derived Quot scheme $\mathbf{Quot}_{\omega/X/S}^d$ matches their $\mathbb{R}\mathrm{Quot}$.

As discussed, Theorem~\ref{thm:intro:SOD:curve} gives a coherent analogue of the generalized Macdonald formula due to Maulik--Yun~\cite{MY}, Migliorini--Shende~\cite{MS}; see also Rennemo \cite{Ren}. It also yields a categorification of the wall-crossing formula of Pandharipande and Thomas~\cite{PT}, paralleling the approach in categorical DT theory developed by Toda in \cite[Theorem 5.1.9]{Tod24}.

Families of integral curves and their compactified Picard schemes naturally arise in various settings, such as in the study of moduli spaces of torsion sheaves on K3 surfaces \cite{ADM}, and in Hitchin fibrations for Higgs bundles over the elliptic locus \cite{MY, Ari, MS24, MSY25, HMMS, PT24}. We hope to explore related applications in future work.

As mentioned, our results on semiorthogonal decompositions for coherent systems (Theorem~\ref{thm:intro:SOD:coh.sys}) extend previous work for smooth curves: see \cite{Tod2, JL18, BK19, J22a, KT} for symmetric powers, \cite{J21, J23, Tod23} for linear series, and \cite{KT} for Thaddeus pairs.

Our constructions in Section~\ref{sec:extension} extend the classical framework of \cite{Lag83} to the derived setting and are closely linked to geometric representation theory. Porta and Sala \cite{PS} have studied derived moduli of extensions, which are fundamental in their theory of categorified Hall algebras; see also \cite{DPS22, DPSV23}. Derived Hecke moduli spaces (see Example~\ref{eg:Hecke}) were studied in \cite{Neg19, Neg22, J22b}, and have been used in \cite{MMSV} to explore the COHA of zero-dimensional sheaves on surfaces.
The geometry of moduli of extensions is closely linked to Brill--Noether theory (see \S\ref{sec:BN}) \cite{BCJ, Bay15, Z24b} and to wall-crossing phenomena \cite{Kos22, Z24a}. Our constructions in \S\ref{subsec:SOD:Ext} are also related to the study of stable pair moduli spaces (see Example~\ref{eg:stable.pairs}), as developed by Pandharipande--Thomas \cite{PT09, PT}, Bridgeland \cite{Bri11}, and Bryan--Steinberg \cite{BS16, DPSV23}, as well as to Brill--Noether settings for two-dimensional categories, as studied in \cite{Che21} and \cite[\S 5.3]{J19}.

 Recent work by the author with Li and Zhao~\cite{JLZ} reveals profound representation-theoretic structures underlying the semiorthogonal decompositions of derived Grassmannians, with notable applications to instanton moduli on blowups of surfaces studied in \cite{LQ99, NY}.  Combined with the general framework developed in this paper, these developments provide a robust approach to a wide range of moduli problems, which we plan to pursue in future work.
\subsection{Notations and conventions}

We adopt the following notational and terminological conventions throughout this paper, aiming to make our exposition accessible to readers familiar with classical algebraic geometry as well as those versed in derived settings.

\begin{itemize}

\item {\bf Derived conventions for functors.} 
All functors are interpreted in the \emph{derived} sense. For example, for a morphism $f \colon X \to Y$ of derived stacks, we write the pullback as $f^* \colon \Dqc(Y) \to \Dqc(X)$ and the tensor product in $\Dqc(X)$ as $\otimes$, where $\Dqc(X)$ denotes the stable $\infty$-category $\mathrm{QCoh}(X)$ defined in \cite[\S 6.2.2]{SAG}.

When $f \colon X \to Y$ is a morphism of underived schemes, the homotopy category of $\Dqc(X)$ recovers the usual triangulated quasi-coherent derived category $D_{QCoh}(\mathscr{O}_X)$ of \cite[\href{https://stacks.math.columbia.edu/tag/06YZ}{Tag 06YZ}]{stacks-project}. In this case, our notation $f^*$ corresponds to the classical derived pullback $\mathbb{L}f^*$, $f_*$ to the derived pushforward $\mathbb{R}f_*$, $\otimes$ to the derived tensor product $\otimes^\mathbb{L}$, and $\sHom_X(-,-)$ to the derived sheaf Hom $\mathbb{R}\sHom_X(-,-)$, whenever defined.

The only exception is that we use $\operatorname{Hom}(A, B) = \Ext^0(A,B)$ for the classical Hom group in abelian categories, and $\mathbb{R}\operatorname{Hom}(A, B)$ for the total derived Hom group.

\item {\bf Classical terminology for sheaves and complexes.} 
To avoid confusion for classical readers, we refer to objects of $\Dqc(X)$ as \emph{quasi-coherent complexes}, even when $X$ is a derived stack. Note, however, that such objects are not necessarily represented by actual complexes. We reserve the terminology ``sheaves" for discrete objects in $\Dqc(X)$, that is, those whose homotopy groups (or, classically, sheaf cohomology) vanish in all nonzero degrees. This aligns with classical terminology for sheaves and complexes on schemes, but differs from the conventions in Lurie~\cite{SAG}.

\item {\bf Grading conventions.} 
We adopt homological grading conventions, as in Lurie~\cite{DAG, HA, SAG}, rather than the cohomological grading of~\cite{SGA6} and the Stacks project~\cite{stacks-project}. For instance, for a classical ring $R$, an $R$-module $M$ has Tor-amplitude in $[a, b]$ in our convention if and only if it has Tor-amplitude in $[-b, -a]$ in the conventions of~\cite[Expos\'e III]{SGA6} and~\cite[\href{https://stacks.math.columbia.edu/tag/0651}{Tag 0651}]{stacks-project}.

\item {\bf Grothendieck’s convention.} 
For projectivizations and Grassmannians, we will use Grothendieck’s quotient convention as in~\cite{J22a, J22b}. For example, for a $\kappa$-vector space $V$, the projectivization $\mathbb{P}(V)$ parametrizes one-dimensional quotients of $V$, so that $\mathbb{P}(V)(\kappa) \simeq (V^\vee \setminus \{0\})/\kappa^\times$, where $V^\vee = \operatorname{Hom}_\kappa(V, \kappa)$ denotes the dual vector space.

\item {\bf Derived stacks.} 
We use boldface letters such as $\mathbf{Hilb}_{X/S}^d$ and $\mathbf{Quot}_{\omega/X/S}^d$ for derived enhancements of classical moduli stacks. The corresponding classical stacks are denoted with standard roman letters, e.g., $\Hilb_{X/S}^d$ and $\Quot_{\omega/X/S}^d$.

\end{itemize}
In Sections~\ref{sec:Q-complex} and~\ref{sec:extension}, where we develop the necessary theories within the framework of derived algebraic geometry, we will use terminology that is largely standard and mainly follows Lurie~\cite{DAG, HA, SAG}, subject to the conventions described above.

Let $\CAlgDelta$ denotes the $\infty$-category of simplicial commutative rings (that is, animated rings) as defined in \cite[\S 25.1]{SAG}. A {\em prestack} is a functor $X$ from $\CAlgDelta$ to the $\infty$-category $\mathcal{S}$ of spaces (i.e., $\infty$-groupoids). A morphism $f \colon X \to Y$ between prestacks is defined as a natural transformation of functors. The morphism $f$ is called representable if for each $\Spec A \to Y$, the base change $X_A = X \times_Y \Spec A$ is a derived scheme. 

A \emph{derived stack} $X$ is a prestack satisfying {\'e}tale descent. Let $X_{\mathrm{cl}} \colon \CAlgDelta^{\heartsuit} \to \mathcal{S}$ denote the sheafification of the restriction $X|_{ \CAlgDelta^{\heartsuit}}$ to discrete commutative rings; we call $X_{\mathrm{cl}}$ the \emph{classical truncation} of $X$. If $X = (|X|, \mathscr{O}_X)$ is a derived scheme, then $X_{\mathrm{cl}} = (|X|, \pi_0 \mathscr{O}_X)$.

A {\em point} of a derived stack $X$ is an equivalence class of morphisms $x \colon \operatorname{Spec} \kappa \to X$, where $\kappa$ is a field. By abuse of notation, we write $x \in X$ for such a point, and choose a representative $x \colon \operatorname{Spec} \kappa(x) \to X$. If $X$ is a derived scheme, points correspond to those of the underlying topological space $|X|$, and $\kappa(x)$ may be taken as the residue field at $x \in |X|$.

A morphism $X \to Y$ between derived stacks is called \emph{representable} if, for any $\operatorname{Spec} A \to Y$, where $A \in \CAlgDelta$, the fiber product $X_{A} : =X \times_{Y} \operatorname{Spec} A$ is a derived scheme. 

For a derived stack $X$, we consider the following derived $\infty$-categories:
\[
\Perf(X) \subseteq \mathrm{D}^-_{\mathrm{coh}}(X) \subseteq \mathrm{D}^b_{\mathrm{coh}}(X) \subseteq \Dqc(X).
\]
Here, $\Dqc(X)$ denotes the stable $\infty$-category of quasi-coherent complexes on $X$ as defined in \cite[\S 6.2.2]{SAG}, and $\Perf(X)$ is its full subcategory of perfect complexes. Both categories admit explicit descriptions as limits:
\[
\Dqc(X) \simeq \varprojlim_{\Spec A \to X} \Dqc(\Spec A), \qquad
\Perf(X) \simeq \varprojlim_{\Spec A \to X} \Perf(\Spec A),
\]
where $\Dqc(\Spec A)$ is the $\infty$-category $\mathrm{Mod}_{A^\circ}$ of module spectra over the underlying $\mathbb{E}_\infty$-ring $A^\circ$ of $A$ (\cite[Notation 7.1.1.1]{HA}), and $\Perf(\Spec A)$ is the stable subcategory of perfect objects (\cite[Definition 7.2.4.1]{HA}). The subcategories $\mathrm{D}^-_{\mathrm{coh}}(X)$ and $\mathrm{D}^b_{\mathrm{coh}}(X)$ consist of almost perfect complexes (see \cite[Proposition 6.2.5.2(6)]{SAG}) and locally truncated (see \cite[Proposition 3.5.3]{DAG}, \cite[Notation 6.4.1.1]{SAG}) almost perfect complexes, respectively.

For a derived stack $X$, $\Dqc(X)$ admits a natural $t$-structure $(\Dqc(X)_{\ge 0}, \Dqc(X)_{\le 0})$, where $\Dqc(X)_{\ge 0} = \Dqc(X)^{\mathrm{cn}}$ is the subcategory of connective complexes.
For an integer $n$, we let $\Dqc(X)_{\ge n} = \Dqc(X)_{\ge 0}[n]$ and $\Dqc(X)_{\le n} = \Dqc(X)_{\le 0}[n]$, where $[n] = \Sigma^{\circ n}$ is the $n$th suspension functor. We let $\tau_{\ge n} \colon \Dqc(X) \to \Dqc(X)_{\ge n}$ (resp. $\tau_{\le n} \colon \Dqc(X) \to \Dqc(X)_{\le n}$) denote the right (resp. left) adjoint functor of the inclusion functor. We let $\Dqc(X)^{\heartsuit} = \Dqc(X)_{\ge 0} \cap \Dqc(X)_{\le 0}$ and refer to its objects as {\em (discrete) sheaves}. We let 
	$$\pi_{n} = \tau_{\ge 0} \circ \tau_{\le 0} \circ [-n] \simeq \tau_{\le 0} \circ \tau_{\ge 0} \circ [-n] \colon \Dqc(X) \to \Dqc(X)^{\heartsuit}.$$


A morphism $f \colon X \to Y$ of derived stacks is called \emph{smooth} if it is locally almost of finite presentation and its relative cotangent complex $\mathbb{L}_{X/Y}$ is perfect of Tor-amplitude $\le 0$.

Following \cite[\S 1.3.3]{TVe08}, we define by induction the notation of \emph{$n$-geometric} derived stacks for $n \ge -1$. The $(-1)$-geometric derived stacks (and morphisms) are precisely the derived affine schemes (and affine morphisms). Assuming the notions of $(n-1)$-geometric stacks and morphisms have been defined, a derived stack $X$ is $n$-geometric if its diagonal is $(n-1)$-geometric and there exists a disjoint union of derived affine schemes $U$ together with a smooth $(n-1)$-geometric epimorphism $U \to X$. A morphism $f \colon X \to Y$ of derived stacks is $n$-geometric if for any morphism $\operatorname{Spec} A \to Y$ with $A \in \CAlgDelta$, the base change $X \times_{Y} \operatorname{Spec} A$ is $n$-geometric. 

A derived stack $X$ is called \emph{geometric} if it is $n$-geometric for some $n$. This definition agrees with those in \cite{ToenDAG, GRI, PS} and is equivalent to the notion of derived Artin stack in \cite{Khan}. By \cite[Proposition 2.2.5.1]{TVe08}, a morphism $f \colon X \to Y$ of derived geometric stacks is smooth if and only if it is a geometric smooth morphism in the sense of \cite[\S 2.2.3]{TVe08}.

A derived stack $X$ is said to be a {\em perfect stack} (\cite[Definition 3.2]{BFNjams}) 
  if it has affine diagonal and the inclusion $\Perf(X) \subseteq \Dqc(X)$ induces an equivalence 
	\[\Ind \Perf(X) \simeq \Dqc(X).\] 
A morphism of derived stacks $X \to Y$ is said to be \emph{perfect} if for all morphisms $U= \operatorname{Spec} (A) \to Y$ from an affine derived scheme $U$, the fibers $X \times_U Y$ are perfect. 

For a perfect stack $X$, the stable $\infty$-category $\Dqc(X)$ is compactly generated, with compact objects given by the perfect complexes. Moreover, the natural $t$-structure on $\Dqc(X)$ is accessible, both left and right complete, and compatible with filtered colimits; see \cite[Corollary 9.1.3.2(2), (3), (4)]{SAG}.

Given a morphism $f \colon X \to Y$ of derived stacks, we write $f^* \colon \Dqc(Y) \to \Dqc(X)$ for the pullback functor. If $f$ is perfect, $f^*$ admits a right adjoint, the pushforward functor $f_* \colon \Dqc(X) \to \Dqc(Y)$, which preserves small colimits. The projection formula and base-change formula hold for perfect morphisms; see \cite[Proposition 9.1.5.7]{SAG}.

For the reader’s convenience, we note that most morphisms considered in this paper are perfect. This includes, for instance, all morphisms between perfect stacks \cite[Corollary 3.23]{BFNjams}, as well as morphisms that are relative quasi-compact, separated derived algebraic spaces (see \cite[Proposition 9.6.1.1]{SAG} or \cite[Proposition 3.19]{BFNjams}). A special case of the latter case is that of locally (quasi-)projective morphisms.


For a perfect stack $X$, since $\Dqc(X)$ is presentable and the tensor product $\otimes \colon \Dqc(X) \times \Dqc(X) \to \Dqc(X)$ commutes with small colimits separately in each variable, one obtains that for every pair $\sE, \sF \in \Dqc(X)$, there exists a quasi-coherent complex $\sHom_X(\sE,\sF) \in \Dqc(X)$, called {\em (internal) $\Hom$-complex}, which satisfies the following universal property: there is an evaluation map $\sE \otimes \sHom_X(\sE,\sF) \to \sF$ such that, for every $\sG \in \Dqc(X)$,
the composite map
	\begin{align*}
		\Map_X(\sG, \sHom_X(\sE, \sF)) \to \Map_X(\sE \otimes \sG, \sE \otimes \sHom_X(\sE, \sF)) \to \Map_X(\sE  \otimes \sG, \sF)	
	\end{align*}
is a homotopy equivalence; see \cite[\S 9.5.3]{SAG} for more details.

\subsection{Acknowledgement}
The author thanks Arend Bayer, Richard Thomas, Yukinobu Toda, Tom Bridgeland, Junliang Shen, Qizheng Yin, Tudor Paradariu, Yu Zhao, and Francesco Sala for  insightful discussions related to this work. Thanks are also due to Adeel Khan for many helpful conversations on derived stacks. Special thanks go to Mauro Porta for his careful reading of an early draft and for numerous valuable comments and suggestions.
This work was supported by the Hong Kong Research Grants Council ECS grant (Grant No. 26311724).

\section{The $\mathcal{Q}$-complex}
\label{sec:Q-complex}
Let $f \colon X \to S$ be a map of derived stacks, and let $\mathscr{E}, \mathscr{F} \in \Dqc(X)$. Under suitable assumptions, we construct a complex $\shQ_{f}(\sE,\sF)$ (Construction \ref{constr:Q}) that serves as a dual to the pushforward of Hom complex $f_* \sHom_X(\mathscr{E}, \mathscr{F})$. 
This construction generalizes the classical Grothendieck $Q$-module \cite[III$_2$ Theorem 7.7.6]{EGA} and the Altman--Kleiman's module $H(\mathscr{E}, \mathscr{F})$ \cite[(1.1)]{AK80} to the derived setting (see Remark \ref{rmk:H(I,F)}). 

 The formation of $\shQ_{f}(\sE,\sF)$ is compatible with base change in $S$ (Proposition \ref{thm:Q}). We establish fiberwise criteria for the connectivity and Tor-amplitude of $\shQ_{f}(\sE, \sF)$ (Corollary~\ref{cor:Ext:fiberwise}), and formulate versions of relative Serre duality and coherent duality in terms of the complex $\mathcal{Q}$ (Propositions~\ref{prop:Serre:Q} \& \ref{prop:coh:duality}). Furthermore, the $\mathcal{Q}$-complex offers a derived framework for simplicity for complexes (\S\ref{subsec:simple}), and provides an explicit description of the cotangent complexes of derived moduli stacks of coherent sheaves $\underline{\mathbf{Coh}}(X)$ on proper complex schemes (Lemma~\ref{lem:cotangent:Coh}).

A key notion in this context is that of relative perfectness, which replaces the absolute perfectness assumption on $\mathscr{F}$; see \cite[Exposé III]{SGA6} for the classical theory. 

\subsection{Relative Tor-amplitudes}
We now consider relative Tor-amplitude and relative perfectness in the derived setting, generalizing the classical notions of \cite[Exposé III, \S 3]{SGA6}.

\begin{definition}[Relative Tor-amplitude and relative perfectness; see also {\cite[\S 2.1]{PS}}] 
\label{def:relative.Tor}
Let $f \colon X \to S$ be a morphism of derived stacks, and let $\sF \in \Dqc(X)$. Let $a \le b$ and $n$ be integers.
\begin{enumerate}
    \item We say that $\sF$ has \emph{relative Tor-amplitude in $[a,b]$} (respectively, \emph{relative Tor-amplitude $\le n$}) over $S$ if, for every discrete sheaf $\sG \in \Dqc(S)^\heartsuit$, 
    \[
        \pi_i(\sF \otimes f^*(\sG)) = 0 \quad \text{for } i \not\in [a,b] \quad (\text{respectively, for } i > n).
    \]
    We say that $\sF$ has \emph{relative finite Tor-amplitude over $S$} if there exist integers $a \le b$ such that $\sF$ has relative Tor-amplitude in $[a,b]$ over $S$. We say that $\sF$ has \emph{locally finite relative Tor-amplitude} over $S$ if for every morphism $\operatorname{Spec} A \to S$ with $A \in \CAlgDelta$, the base change of $\sF$ to $X \times_S \operatorname{Spec} A$ has finite Tor-amplitude over $\operatorname{Spec} A$.
    \item Suppose further that $f$ is locally almost of finite presentation. We say that $\sF$ has \emph{relative perfect amplitude in $[a,b]$} (respectively, $\le n$) over $S$ if $\sF$ is almost perfect and has relative Tor-amplitude in $[a,b]$ (respectively, $\le n$) over $S$. We say that $\sF$ is \emph{relatively perfect over $S$} if it is almost perfect and has locally finite relative Tor-amplitude over $S$.
\end{enumerate}
\end{definition}

When $f = \operatorname{id}_X$, these definitions reduce to the usual notions of Tor-amplitude and perfectness on $X$. Note also that $\sF$ has relative Tor-amplitude in $[a,b]$ if and only if $\sF$ has relative Tor-amplitude $\le b$ and $\pi_i(\sF) = 0$ for $i < a$.

Let $\mathrm{Perf}(X/S)$ denote the full subcategory of $\Dqc(X)$ consisting of complexes that are relatively perfect over $S$. 
If $X \to \operatorname{Spec} k$ is a Noetherian scheme over a field, then $\mathrm{Perf}(X/k) = \mathrm{D^b_{coh}}(X)$, the category bounded coherent complexes.
If $f =  \operatorname{id}_X$, then $\mathrm{Perf}(X/X) = \mathrm{Perf}(X)$ is the usual category of perfect complexes on $X$.

\begin{remark}
\label{rmk:relative-Tor_preserves_truncated}
If $S$ is a perfect stack, the condition that $\sF$ has relative Tor-amplitude $\le n$ over $S$ is equivalent to the following apparently stronger statement:
\begin{itemize}
    \item The functor $\sG \mapsto \sF \otimes f^*(\sG)$ restricts to
    \[
        \sF \otimes f^*(\blank) \colon \Dqc(S)_{\le 0} \to \Dqc(X)_{\le n}.
    \]
\end{itemize}
In fact, consider any $\sG \in \Dqc(S)_{\le 0}$. Since the $t$-structure on $\Dqc(S)$ is right complete (\cite[Corollary 9.1.3.2.(4)]{SAG}), we can write $\sG = \varinjlim \tau_{\ge -m}(\sG)$. Using the fiber sequence
\[
    \tau_{\ge 1-m}(\sG) \to \tau_{\ge -m}(\sG) \to \pi_{-m} \sG[-m],
\]
and induction on $m \ge 0$, 
we obtain that $\pi_i(\sF \otimes f^*(\sG)) = 0$ for all $i > n$.
\end{remark}

The next lemma is a derived analogue of \cite[{E}xpos{\'e} III, Corollaries 3.7.1 and 4.8.1]{SGA6}:

\begin{lemma}
\label{lem:relative-perf}
Let $f \colon X \to S$ be a representable morphism of derived stacks, locally almost of finite presentation, and let $\sF \in \Dqc(X)$.
\begin{enumerate}
    \item \label{lem:relative-perf-1}
    If $\sF$ has finite relative Tor-amplitude over $S$, then $f_*(\sF)$ has finite (absolute) Tor-amplitude.
    \item \label{lem:relative-perf-2}
    If $f$ is proper and $\sF$ is relatively perfect over $S$, then $f_*(\sF)$ is perfect.
\end{enumerate}
\end{lemma}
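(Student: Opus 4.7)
The plan is to prove both parts by reducing to an affine base and combining the projection formula with a bound on the cohomological amplitude of $f_*$. The question is local on $S$, so I would first reduce to the case $S = \Spec A$ with $A \in \CAlgDelta$; by representability, $X$ is then a derived scheme over $\Spec A$. For part (2) properness ensures that $f$ is quasi-compact and separated, and in part (1) we tacitly assume enough quasi-compactness for $f_*$ to be well-behaved on $\Dqc$. Under these hypotheses $f$ is perfect in the sense of the notational conventions, so the projection formula
\[
f_*(\sF) \otimes_S \sG \simeq f_*(\sF \otimes_X f^*(\sG))
\]
holds for every $\sG \in \Dqc(S)$, and there exists an integer $N \ge 0$ such that $f_*$ carries $\Dqc(X)_{\le n}$ into $\Dqc(S)_{\le n+N}$ for every $n$ (obtainable, for instance, via \v{C}ech cohomology on a finite affine cover of $X$).

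For part (1), suppose $\sF$ has relative Tor-amplitude in $[a,b]$ over $S$. For any discrete $\sG \in \Dqc(S)^{\heartsuit}$ the projection formula gives
\[
f_*(\sF) \otimes_S \sG \simeq f_*\bigl(\sF \otimes_X f^*(\sG)\bigr),
\]
and the hypothesis places $\sF \otimes_X f^*(\sG)$ in $\Dqc(X)_{[a,b]}$. The amplitude bound on $f_*$ then puts the right-hand side in $\Dqc(S)_{[a,b+N]}$. Since the range $[a,b+N]$ does not depend on $\sG$, it follows that $f_*(\sF)$ has absolute Tor-amplitude in $[a,b+N]$.

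For part (2), assume $\sF$ is relatively perfect and $f$ is proper. The derived version of Grothendieck's coherence theorem (see, e.g., \cite[Theorem 5.6.0.2]{SAG}) implies that $f_*$ preserves almost perfect complexes, so $f_*(\sF)$ is almost perfect. Applying part (1) locally on $S$, $f_*(\sF)$ also has locally finite absolute Tor-amplitude. Since a complex that is simultaneously almost perfect and of locally finite Tor-amplitude is perfect, the conclusion follows.

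The main obstacle will be establishing the cohomological amplitude bound on $f_*$ in the derived-stack setting: for a general representable morphism this requires some form of quasi-compactness, and in the derived context one must check that the \v{C}ech-type argument truncates correctly in the presence of non-trivial higher homotopy on $X$ and $S$. Once that bound and the projection formula \cite[Proposition 9.1.5.7]{SAG} are in hand, the remainder is a formal manipulation of $t$-structures together with the standard characterisation of perfectness as almost perfect plus finite Tor-amplitude.
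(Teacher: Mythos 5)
Your overall strategy coincides with the paper's: reduce to $S$ affine, apply the projection formula, invoke a coherent-cohomology amplitude bound for $f_*$, and for part~(2) add that proper pushforward preserves almost perfect complexes (Lurie's version of Kiehl's theorem). The gap is in the amplitude bookkeeping. The paper works in \emph{homological} grading ($\Dqc(X)_{\ge 0}$ is the connective part), and in that convention the non-trivial finiteness of coherent cohomology is a bound at the \emph{bottom}: there exists $N\ge 0$ such that $f_*$ sends $\Dqc(X)_{\ge a}$ into $\Dqc(S)_{\ge a-N}$ (this is \cite[Proposition 2.5.4.4]{SAG}, or your \v{C}ech argument once translated). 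On the other side $f_*$ preserves $\Dqc(X)_{\le n}$ exactly, being right adjoint to the right $t$-exact $f^*$. Your stated bound ``$f_*$ carries $\Dqc(X)_{\le n}$ into $\Dqc(S)_{\le n+N}$'' is therefore a trivially true weakening of the left $t$-exactness of $f_*$ and gives no control on $\pi_i$ for small $i$; the intermediate assertion that $f_*(\sF\otimes f^*\sG)\in \Dqc(S)_{[a,b+N]}$ does not follow from it, and the correct conclusion in part~(1) is Tor-amplitude in $[a-N,b]$, not $[a,b+N]$. If you intended to argue cohomologically your inequalities would be internally consistent, but that contradicts the paper's explicitly stated homological convention. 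This is a one-line fix and does not affect the final statement (which only asserts finite Tor-amplitude), and part~(2) is fine as written.
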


\begin{proof}
Since these properties for $f_*(\sF)$ are local on $S$ and stable under base change, we may assume $S$ is a derived affine scheme. Let $a \le b$ be integers. To prove~\eqref{lem:relative-perf-1}, it suffices to show:
\begin{itemize}
    \item[(*)] If $\sF$ has relative Tor-amplitude in $[a,b]$ over $S$, then $f_*(\sF)$ has Tor-amplitude in $[a-N, b]$ for some integer $N \ge 0$.
\end{itemize}
For any $\sG \in \Dqc(S)^\heartsuit$, from projection formula we have
\(
    f_*(\sF) \otimes \sG \simeq f_*\big(\sF \otimes f^*(\sG)\big).
\)
Given our assumption on $\sF$, we have $\pi_i(\sF \otimes f^*(\sG)) = 0$ for $i \notin [a, b]$. By \cite[Proposition 2.5.4.4]{SAG} (see also \cite[Proposition 3.9.2]{Lip} for the classical case), there exists $N \ge 0$ so that $f_*$ sends $\Dqc(X)_{\ge a}$ into $\Dqc(S)_{\ge a-N}$. Thus,
\[
    \pi_i\big(f_*(\sF \otimes f^*(\sG))\big) = 0 \quad \text{for } i \notin [a-N, b],
\]
which proves~(*).

If $f$ is also proper and $\sF$ is almost perfect, then $f_*(\sF)$ is almost perfect by \cite[Theorem 5.6.0.2]{SAG} (classically, this is Kiehl's Finiteness Theorem \cite[Corollary 4.3.3.2]{Lip}). Thus, assertion~\eqref{lem:relative-perf-2} follows from~\eqref{lem:relative-perf-1}.
\end{proof}

\subsection{The $\shQ$-complex}
Let $f \colon X \to S$ be a morphism of derived stacks, and let $\mathscr{E}, \mathscr{F} \in \Dqc(X)$. In this subsection, we construct the $\mathcal{Q}$-complex $\shQ_{f}(\mathscr{E}, \mathscr{F})$ under suitable mild assumptions, which serves as a dual to $f_* \sHom_X(\mathscr{E}, \mathscr{F})$.

If $f$ has finite Tor-amplitude and $\mathscr{F}$ is perfect, the $\mathcal{Q}$-complex admits a simple description:
\[
\shQ_{f}(\mathscr{E}, \mathscr{F}) \simeq f_+\big(\mathscr{E} \otimes \mathscr{F}^\vee\big);
\]
see Remark~\ref{rmk:Q:basic}.(3), where $f_+$ denotes the left adjoint to $f^*$ constructed in \cite[\S 6.4.5]{SAG}. In this case, the formal properties of $\shQ_f(\mathscr{E},\mathscr{F})$ follow from those of $f_+$ in  \cite[\S 6.4.5]{SAG}.

Our goal is to establish analogous results without the assumptions of finite Tor-amplitude for $f$ or perfectness of $\mathscr{F}$. 

We first focus on the case where $f$ is a morphism between perfect stacks.

\begin{construction}[The $\mathcal{Q}$-complex: perfect stack case]
\label{constr:Q:perf}
Let $f \colon X \to S$ be a proper, representable morphism between perfect stacks, locally almost of finite presentation. Let $\sE, \sF \in \Dqc(X)$, and assume that $\sF$ is relatively perfect over $S$. Consider the functor
\[
    \shQ_{f}^{\perf}(\blank, \sF) \colon \Perf(X) \to \Dqc(S),
\]
\[
   \mathscr{E} \mapsto f_*(\sE^\vee \otimes \sF)^\vee \in \Perf(S) \subseteq \Dqc(S).
\]
This is well-defined by Lemma~\ref{lem:relative-perf}.\eqref{lem:relative-perf-2}. 
Since $\Dqc(X) \simeq \Ind \Perf(X)$, by the universal property of Ind-completionthe (\cite[Proposition 5.3.5.10]{HTT}), the finite-coproduct-preserving functor $\shQ_{f}^{\perf}(\blank, \sF)$ extends essentially uniquely to a colimit-preserving functor
\[
    \shQ_{f}(\blank, \sF) \colon \Dqc(X) \to \Dqc(S).
\]
\end{construction}

\begin{remark}\label{rmk:Q:basic}
\begin{enumerate}
    \item By definition, there are canonical equivalences
    \[
        \mathcal{Q}_f(\mathscr{E}[1], \mathscr{F}) \simeq \mathcal{Q}_f(\mathscr{E}, \mathscr{F}[-1]) \simeq \mathcal{Q}_f(\mathscr{E}, \mathscr{F})[1].
    \]
    Moreover, for any perfect complex $\mathscr{P} \in \mathrm{Perf}(X)$, we have a canonical equivalence
    \[
        \mathcal{Q}_f(\mathscr{E} \otimes \mathscr{P}, \mathscr{F}) \simeq \mathcal{Q}_f(\mathscr{E}, \mathscr{P}^\vee \otimes \mathscr{F}).
    \]
    \item For any $\mathscr{E} \in \Dqc(X)$, writing $\mathscr{E} = \varinjlim_{\alpha} \mathscr{E}_\alpha$ as a filtered colimits with $\mathscr{E}_\alpha \in \Perf(X)$, there is a canonical equivalence
    \[
        \shQ_{f}(\mathscr{E}, \mathscr{F}) \simeq \varinjlim_{\alpha} \left( f_*(\mathscr{E}_\alpha^\vee \otimes \mathscr{F})^\vee \right).
    \]
   \item If $\mathscr{F} \in \Perf(X)$ and $f$ is locally of finite Tor-amplitude, then $f^*$ admits a left adjoint $f_+$ (see \cite[Proposition 6.4.5.3]{SAG}), and
    \[
        \shQ_{f}(\mathscr{E}, \mathscr{F}) \simeq f_+(\mathscr{E} \otimes \mathscr{F}^\vee).
    \]
\end{enumerate}
\end{remark}

\begin{proposition}
\label{thm:Q}
Assume the setting of Construction~\ref{constr:Q:perf}. Then:
\begin{enumerate}
    \item \label{thm:Q-1} \textbf{(Universal property)}  
    For any $\sG \in \Dqc(S)$, there is a canonical equivalence
	 \begin{equation*}
		\sHom_S\big(\shQ_{f}(\sE,\sF), \sG\big) \simeq f_* \sHom_X\big(\sE ,\sF \otimes f^*(\sG)\big)
	\end{equation*}
    in $\Dqc(S)$, functorial with respect to $\mathscr{G} \in \Dqc(S)$. 
    
    In particular, applying global section functor, we obtain the following universal property for $\shQ_{f}(\sE,\sF)$: for all $\sG \in \Dqc(S)$, there are canonical equivalences
    \[
        \Map_S\big(\shQ_{f}(\sE,\sF), \sG\big) \simeq \Map_X\big(\sE, \sF \otimes f^*\sG\big).
    \]
    
    \item \label{thm:Q-2} \textbf{(Base change)}  
    Given any Cartesian diagram of perfect stacks
   \begin{equation*}
	\begin{tikzcd}
		X' \ar{d}{f'} \ar{r}{g'} & X \ar{d}{f} \\
		S' \ar{r}{g} & S,
	\end{tikzcd}
	\end{equation*}
    there is a canonical equivalence
    \[
       g^*(\shQ_{f}(\sE, \sF)) \simeq \shQ_{f'}(g'^* \sE,g'^* \sF).
    \]

    \item \label{thm:Q-3} \textbf{(Finiteness)}  
    If $\sE$ is almost perfect, then $\shQ_{f}(\sE,\sF)$ is also almost perfect.
\end{enumerate}
\end{proposition}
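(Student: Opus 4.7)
My strategy is to establish each of the three assertions first for $\sE \in \Perf(X)$, where $\shQ_f(\sE, \sF)$ has the explicit description $f_*(\sE^\vee \otimes \sF)^\vee$, and then extend to arbitrary $\sE \in \Dqc(X) \simeq \Ind\,\Perf(X)$ by exploiting that $\shQ_f(-, \sF)$ is, by Construction~\ref{constr:Q:perf}, the essentially unique colimit-preserving extension of its restriction to $\Perf(X)$.

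For the universal property (1), the perfect case unfolds as a chain of standard equivalences. Since $f_*(\sE^\vee \otimes \sF) \in \Perf(S)$ by Lemma~\ref{lem:relative-perf}\eqref{lem:relative-perf-2}, its dual evaluates against $\sG$ as a tensor product, and the projection formula for the perfect morphism $f$ between perfect stacks (\cite[Proposition~9.1.5.7]{SAG}) identifies the result with $f_*(\sE^\vee \otimes \sF \otimes f^* \sG) \simeq f_* \sHom_X(\sE, \sF \otimes f^* \sG)$, using that $\sHom_X(\sE, -) \simeq \sE^\vee \otimes (-)$ for perfect $\sE$. The extension to general $\sE = \varinjlim \sE_\alpha$ with $\sE_\alpha \in \Perf(X)$ is formal: both sides of the desired equivalence convert colimits in $\sE$ to limits, using that $\sHom_X(-, -)$ is continuous in its first argument, that $f_*$ preserves limits, and that $\sHom_S(-, \sG)$ converts the defining colimit of $\shQ_f(\sE, \sF)$ into a limit.

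For the base change (2) in the perfect case, I will combine three standard compatibilities: commutation of dualization with pullback on perfect complexes ($g'^*(\sE^\vee) \simeq (g'^* \sE)^\vee$, and $(f_*(\sE^\vee \otimes \sF))^\vee$ pulls back to the dual of its pullback), symmetric monoidality of $g'^*$, and the base-change equivalence $g^* f_* \simeq f'_* g'^*$ valid for perfect morphisms between perfect stacks. Assembling these yields $g^* \shQ_f(\sE, \sF) \simeq \shQ_{f'}(g'^* \sE, g'^* \sF)$ whenever $\sE \in \Perf(X)$. The extension to general $\sE$ is immediate since $g^*$, $g'^*$, and the $\shQ$-functors all preserve filtered colimits, so the comparison map is a colimit of equivalences.

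The main technical point is the finiteness assertion (3). I plan to invoke the approximation characterization of almost perfectness from \cite[Proposition~6.2.5.2(6)]{SAG}: for every integer $n$, there exists a perfect $\sP_n$ and a morphism $\sP_n \to \sE$ whose cofiber lies in $\Dqc(X)_{\ge n}$, so that $K_n := \fib(\sP_n \to \sE) \in \Dqc(X)_{\ge n-1}$. Since $\shQ_f(-, \sF)$ is exact on perfect complexes and extends continuously, one obtains a fiber sequence $\shQ_f(K_n, \sF) \to \shQ_f(\sP_n, \sF) \to \shQ_f(\sE, \sF)$ in $\Dqc(S)$ whose middle term is perfect by Lemma~\ref{lem:relative-perf}\eqref{lem:relative-perf-2}. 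To control the fiber, I apply the universal property from (1): for $\sG \in \Dqc(S)_{\le m}$, the relative Tor bound on $\sF$ (via Remark~\ref{rmk:relative-Tor_preserves_truncated}) places $\sF \otimes f^* \sG \in \Dqc(X)_{\le m+b}$ for a uniform $b$, and the highly connective $K_n$ then admits no nontrivial maps into this target once $n - 1 > m + b$. This forces $\shQ_f(K_n, \sF) \in \Dqc(S)_{\ge n - 1 - b}$, so the cofiber of $\shQ_f(\sP_n, \sF) \to \shQ_f(\sE, \sF)$ becomes arbitrarily connective as $n$ grows, giving the required perfect approximations of $\shQ_f(\sE, \sF)$. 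The step I expect to require the most care is precisely this $t$-structure bookkeeping, since the double dualization in the definition of $\shQ_f$ reverses the direction of amplitude bounds; passing through the universal property as above, rather than computing $K_n^\vee$ directly, is what makes the estimate tractable.
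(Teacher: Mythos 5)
Your proofs of (1) and (2) follow essentially the same path as the paper: for (1) you establish the chain of equivalences on $\Perf(X)$ and extend by continuity to $\Ind\Perf(X)$; for (2) you assemble the standard compatibilities (monoidality of pullback, duality commuting with pullback on perfect complexes, base-change for $f_*$), which is the ``prove it directly like (1)'' route the paper mentions, rather than the Yoneda deduction from (1) that the paper writes out in full. For (3) you take a genuinely different route. The paper invokes \cite[Proposition~9.1.5.1]{SAG} directly: almost perfectness of $\shQ_f(\sE,\sF)$ is equivalent to $\sG \mapsto \Map_S(\shQ_f(\sE,\sF),\sG) \simeq \Map_X(\sE, \sF\otimes f^*\sG)$ preserving filtered colimits on each $\Dqc(S)_{\le n}$, and this is verified by composing the colimit-preserving functor $\sF\otimes f^*(\blank)\colon \Dqc(S)_{\le n}\to\Dqc(X)_{\le n+N}$ (Remark~\ref{rmk:relative-Tor_preserves_truncated}) with $\Map_X(\sE,\blank)$, using the same proposition to say the latter preserves filtered colimits on truncated subcategories since $\sE$ is almost perfect. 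You instead build perfect approximations of $\shQ_f(\sE,\sF)$ out of perfect approximations of $\sE$ and use the universal property plus the $t$-structure orthogonality to push the connectivity estimate through. Your bookkeeping is correct: $\Map_S(\shQ_f(K_n,\sF),\sG)\simeq\Map_X(K_n,\sF\otimes f^*\sG)$ is contractible for $\sG\in\Dqc(S)_{\le m}$ once $n-1>m+b$, which forces $\shQ_f(K_n,\sF)\in\Dqc(S)_{\ge n-1-b}$ by testing against $\sG=\tau_{\le n-2-b}\shQ_f(K_n,\sF)$. Both routes hinge on the identical core observation (universal property + relative Tor-amplitude bound controls connectivity), so the mathematics is sound; the paper's formulation via the mapping-space/filtered-colimit criterion is slightly more economical because it never needs to produce the perfect truncations $\sP_n\to\sE$ over the whole stack $X$. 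Two minor points: your citation should go to the approximation characterization of almost perfectness (e.g.\ \cite[Proposition~7.2.4.11]{HA} in the affine case) rather than \cite[Proposition~6.2.5.2(6)]{SAG}, which identifies $\mathrm{D}^-_{\mathrm{coh}}$ with almost perfect complexes; and the existence of such global perfect approximations over the perfect stack $X$ deserves a word of justification, since it is automatic over affines but not purely formal for stacks (it holds here because $X$ is a perfect stack with $\Dqc(X)\simeq\Ind\Perf(X)$ and a $t$-structure compatible with filtered colimits, but is precisely the kind of point the paper's criterion avoids).
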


\begin{proof}
We first prove assertion \eqref{thm:Q-1}. We first prove the universal property~\eqref{thm:Q-1}. For $\sE \in \Perf(X)$, there are canonical equivalences in $\Dqc(S)$:
\begin{align*}
    \sHom_S(\shQ_{f}(\sE,\sF), \sG)
    &\simeq \sHom_S(f_*(\sE^\vee \otimes \sF)^\vee, \sG) \\
    &\simeq f_*(\sE^\vee \otimes \sF) \otimes \sG \\
    &\simeq f_*(\sE^\vee \otimes \sF \otimes f^*\sG) \\
    &\simeq f_*\sHom_X(\sE, \sF \otimes f^*\sG).
\end{align*}
Since $\Dqc(X) \simeq \Ind\Perf(X)$ and both sides commute with filtered colimits in $\sE$, the above equivalence extends to all $\sE \in \Dqc(X)$.
 
Assertion \eqref{thm:Q-2} can be proved similarly as \eqref{thm:Q-1}. Alternatively, it can be deduced from the universal property \eqref{thm:Q-1}: for any $\sH \in \Dqc(S)$, we have canonical equivalences
	\begin{align*}
		&\Map_{S'} (\shQ_{f'}(g'^*\sE,g'^*\sF), \sH) \\
		\simeq & \Map_{X'}(g'^*(\sE),g'^*(\sF) \otimes f'^*(\sH)) &(\text{universal property of $\shQ_{f'}$})\\
		\simeq & \Map_X(\sE, \sF \otimes g'_* f'^*(\sH)) & (\text{$g'^* \dashv g'_*$ and projection formula})\\
		\simeq & \Map_X(\sE, \sF \otimes f^* g_* (\sH)) & (\text{base-change formula}) \\
		\simeq & \Map_S(\shQ_{f}(\sE,\sF), g_* (\sH)) &(\text{universal property of $\shQ_{f}$}) \\
		\simeq & \Map_{S'}(g^*\shQ_{f}(\sE,\sF), \sH) & (\text{$g^* \dashv g_*$}).
	\end{align*} 
By Yoneda's lemma, this gives the desired equivalence.

For the finiteness property~\eqref{thm:Q-3}, by \cite[Proposition 9.1.5.1]{SAG}, it suffices to show that for each $n$, the functor
\[
    \sG \mapsto \Map_S(\shQ_{f}(\sE,\sF), \sG) \simeq \Map_X(\sE, \sF \otimes f^*\sG)
\]
preserves filtered colimits on $\Dqc(S)_{\le n}$. If $\sF$ has relative Tor-amplitude $\le N$ over $S$, then $\sG \mapsto \sF \otimes f^*\sG$ preserves filtered colimits and sends $\Dqc(S)_{\le n}$ to $\Dqc(X)_{\le n+N}$ (see Remark~\ref{rmk:relative-Tor_preserves_truncated}). Since the $t$-structure on $\Dqc(X)$ is compatible with filtered colimits (see \cite[Corollary 9.1.3.2(3)]{SAG}), and $\Map_X(\sE, \blank)$ commutes with filtered colimits on $\Dqc(X)_{\le n+N}$ (as $\sE$ is almost perfect, see \cite[Proposition 9.1.5.1]{SAG}), the claim follows.
\end{proof}

We now construct the $\mathcal{Q}$-complex in the general setting.

\begin{construction}[The $\mathcal{Q}$-complex]
\label{constr:Q}
Let $f \colon X \to S$ be a proper, representable, and locally almost finitely presented morphism of derived stacks, and let $\mathscr{E}, \mathscr{F} \in \Dqc(X)$ with $\mathscr{F}$ relatively perfect over $S$. For any morphism $\eta \colon \Spec A \to S$ from a derived affine scheme, \cite[Proposition 3.19]{BFNjams} ensures that the fiber product $X_A := X \times_S \Spec A$ is a perfect stack.

Let $f_A \colon X_A \to \Spec A$ be the base change of $f$, and let $\mathscr{E}_A$ and $\mathscr{F}_A$ denote the (derived) pullbacks of $\mathscr{E}$ and $ \mathscr{F}$ to $X_A$. By Proposition~\ref{thm:Q}\eqref{thm:Q-2}, the assignment
\[
    (\eta \colon \Spec A \to S) \mapsto \mathcal{Q}_{f_A}(\mathscr{E}_A, \mathscr{F}_A) \in \Dqc(\Spec A)
\]
defines a global quasi-coherent complex $\mathcal{Q}_{f}(\mathscr{E}, \mathscr{F}) \in \Dqc(S)$, where $\mathcal{Q}_{f_A}(\mathscr{E}_A, \mathscr{F}_A)$ is as in Construction~\ref{constr:Q:perf}. Thus, the construction $(\sE, \sF) \mapsto \shQ_f(\sE, \sF)$ defines an exact bifunctor
\[
    \shQ_f(\underline{\hphantom{A}}, \underline{\hphantom{A}}) \colon \Dqc(X) \times \mathrm{Perf}(X/S)^{\mathrm{op}} \to \Dqc(S).
\]

This construction recovers Construction~\ref{constr:Q:perf} when $X$ and $S$ are perfect stacks. Moreover, the formation of $\mathcal{Q}_f(\mathscr{E}, \mathscr{F})$ commutes with arbitrary base change. Finally, it follows from Proposition~\ref{thm:Q}\eqref{thm:Q-3} that if $\mathscr{E}$ is almost perfect, then so is $\mathcal{Q}_f(\mathscr{E}, \mathscr{F})$.\end{construction}

\begin{remark}[\textbf{Universal property of $\mathcal{Q}$-complexes without perfectness}]
\label{rmk:Q:univ}
Consider the setting of Construction~\ref{constr:Q}, and assume that $\mathscr{E}$ is almost perfect and $S$ is a derived geometric stack. Let $\mathrm{dAff}_{\mathrm{Sm}/S}$ denote the $\infty$-category of pairs $(U, \phi)$ with $U = \Spec A$, $A \in \CAlgDelta$, and $\phi \colon U \to S$ a smooth morphism. By \cite[Lemma 5.2.1]{DAG}, we have:
\[
\Dqc(S) \simeq \varprojlim_{(U \to S) \in \mathrm{dAff}_{\mathrm{Sm}/S}} \Dqc(U).\footnote{Note that by \cite[Corollary 3.2.7]{Khan25}, $\Dqc(S)$ can also be computed as a limit over the subcategory where $U \to U'$ are required to be smooth.}
\]
For each $U \to S$ in $\mathrm{dAff}_{\mathrm{Sm}/S}$, write $f_{U} \colon X_{U} = X \times_S U \to U$ the base change of $f$, and denote by $(-)|_{U}$ and $(-)|_{X_{U}}$ the respective pullbacks. Since $\coprod_{(U \to S) \in \mathrm{dAff}_{\mathrm{Sm}/S}} X_{U} \to X$ is a smooth epimorphism, any morphism $\Spec B \to X$ locally factors through some $X_{U} \to X$. Thus:
\[
\Dqc(X) \simeq \varprojlim_{(U \to S) \in \mathrm{dAff}_{\mathrm{Sm}/S}} \Dqc(X_U).
\]
Every morphism $U \to U'$ in $\mathrm{dAff}_{\mathrm{Sm}/S}$ is quasi-smooth and thus has finite Tor-amplitude; the same holds for $X_U \to X_{U'}$. Therefore, for any locally truncated complex $\mathscr{G} \in \Dqc(S)$, \cite[Lemma 6.5.3.7]{SAG} ensures that the formation of the Hom-complexes
\[
\sHom_{U}\big(\mathcal{Q}_{f}(\mathscr{E}, \mathcal{F})|_{U},\, \mathscr{G}|_{U}\big)
\quad\text{and}\quad
\sHom_{X_{U}}\big(\mathscr{E}|_{X_{U}},\, \mathscr{F}|_{X_{U}} \otimes f_U^*(\mathscr{G}|_{U})\big)
\]
commutes with base changes $U \to U'$ and $X_{U} \to X_{U'}$, respectively, for any morphism $U \to U'$ in $\mathrm{dAff}_{\mathrm{Sm}/S}$. Hence, these constructions descend to global objects  
\[\sHom_S\big(\mathcal{Q}_f(\mathscr{E}, \mathcal{F}), \mathscr{G}\big) \quad \text{and} \quad \sHom_X\big(\mathscr{E}, \mathscr{F} \otimes f^*(\mathscr{G})\big)\]
in $\Dqc(S)$ and $\Dqc(X)$, respectively.
Applying the universal property in the affine case ($S = U$) from Proposition~\ref{thm:Q}\eqref{thm:Q-1}, together with Proposition~\ref{thm:Q}\eqref{thm:Q-2}, we obtain functorial equivalences
\[
\sHom_{U}\big(\mathcal{Q}_{f_U}(\mathscr{E}, \mathcal{F})|_{U},\, \mathscr{G}|_{U}\big)
  \simeq f_{U*} \sHom_{X_{U}}\big(\mathscr{E}|_{X_{U}},\, \mathscr{F}|_{X_{U}} \otimes f_U^*(\mathscr{G}|_{U})\big)
\]
for each $U \to S$ in $\mathrm{dAff}_{\mathrm{Sm}/S}$, compatible with base change along morphisms $U \to U'$ in $\mathrm{dAff}_{\mathrm{Sm}/S}$. Consequently, $\mathcal{Q}_{f}(\mathscr{E}, \mathcal{F})$ satisfies the following universal property: for any locally truncated complex $\mathscr{G} \in \Dqc(S)$, there are canonical equivalences
\begin{align*}
\sHom_S\big(\mathcal{Q}_f(\mathscr{E}, \mathcal{F}), \mathscr{G}\big) & \simeq f_* \sHom_X\big(\mathscr{E}, \mathscr{F} \otimes f^*(\mathscr{G})\big).
\end{align*}

\end{remark}

\begin{remark}[Grothendieck's  $Q$-module and Altman--Kleiman's $H$-module]
\label{rmk:H(I,F)}
Let $f \colon X \to S$ be a finitely presented, proper morphism of classical schemes, and let $\mathscr{E}$ and $\mathscr{F}$ be locally finitely presented $\mathscr{O}_X$-modules, with $\mathscr{F}$ flat over $S$. By Corollary~\ref{cor:Ext:fiberwise} below, the complex $\mathcal{Q}_f(\mathscr{E}, \mathscr{F})$ is connective. Thus, for any quasi-coherent $\mathscr{O}_S$-module $\mathscr{G}$, we have
\[
\operatorname{Map}_S\big(\mathcal{Q}_f(\mathscr{E}, \mathscr{F}), \mathscr{G}\big) \simeq \operatorname{Map}_S\big(\pi_0(\mathcal{Q}_f(\mathscr{E}, \mathscr{F})), \mathscr{G}\big).
\]
The universal property of $\mathcal{Q}$-complex implies a canonical isomorphism of classical Hom groups:
\[
\operatorname{Hom}_S\big(\pi_0(\mathcal{Q}_f(\mathscr{E}, \mathscr{F})), \mathscr{G}\big) \simeq \operatorname{Hom}_X(\mathscr{E}, \mathscr{F} \otimes_{\mathscr{O}_S} \mathscr{G}).
\]
Thus, the truncation $\pi_0(\mathcal{Q}_f(\mathscr{E}, \mathscr{F}))$ of the complex $\mathcal{Q}_f(\mathscr{E},\mathscr{F})$ recovers the module $H(\mathscr{E}, \mathscr{F})$ of Altman--Kleiman \cite[(1.1)]{AK80}, generalizing Grothendieck's $Q$-module \cite[III$_2$ Thm.~7.7.6]{EGA}.
\end{remark}

\subsection{Fiberwise criteria for perfectness of $\mathcal{Q}$-complexes}
We now provide fiberwise criteria for the connectivity and Tor-amplitude of the complex $\shQ_{f}(\sE,\sF)$ of Construction~\ref{constr:Q}.

Since the connectivity and Tor-amplitude for $\shQ_{f}(\sE,\sF)$ are local in the flat topology and stable under arbitrary base change, it suffices to consider the case where $S$ is a quasi-compact, separated derived scheme (or even affine). 

For any point $s \in S$, we use the following convention: form the Cartesian square
	\begin{equation*}
	\begin{tikzcd}		
	X_s =X \times_S \{s\} \ar{d}{f_s} \ar{r}{i_{X_s}} & X \ar{d}{f} \\
		\{s\} \ar{r}{i_s} & S.
	\end{tikzcd}
	\end{equation*}
and denote the pullbacks by $\sE|_{X_s} :=i_{X_s}^* \sE$ and $\sF|_{X_s} :=i_{X_s}^* \sF$.
	
\begin{corollary}[Fiberwise criteria of connectivity and perfectness]
\label{cor:Ext:fiberwise}
Let $f \colon X \to S$ be a proper, locally almost finitely presented morphism between quasi-compact, separated derived schemes. Let $\sE, \sF \in \Dqc(X)$, with $\sE$ almost perfect, and $\sF$ relative perfect over $S$. Then:
\begin{enumerate}
	\item 
	\label{cor:Ext:fiberwise-1}
	 (\emph{Connectivity})  
	Let $n$ be an integer. The following are equivalent:
		\begin{enumerate}[label=(\roman*), ref=$\arabic{enumi}\roman*$]
			\item 
			\label{cor:Ext:fiberwise-1i}
      			  $\shQ_{f}(\sE,\sF)$ is $n$-connective, i.e., $\pi_j(\shQ_{f}(\sE,\sF)) = 0$ for all $j < n$.
			\item
			\label{cor:Ext:fiberwise-1ii}
			 For each point $s \in S$, $\Ext_{X_s}^j(\sE|_{X_s}, \sF|_{X_s}) = 0$ for all $j < n$.
			\item
			\label{cor:Ext:fiberwise-1iii}
			 For each closed point $s \in S$, $\Ext_{X_s}^j(\sE|_{X_s}, \sF|_{X_s}) = 0$ for all $j < n$.
		\end{enumerate}
	\item
	\label{cor:Ext:fiberwise-2}
 	(\emph{Perfectness and Tor-amplitude})  
    	Let $a \leq b$ be integers. The following are equivalent:	
	\begin{enumerate}[label=(\roman*), ref=$\arabic{enumi}\roman*$]
			\item 
			\label{cor:Ext:fiberwise-2i}
			$\shQ_{f}(\sE,\sF)$ is perfect with Tor-amplitude in $[a,b]$. 
			\item 
			\label{cor:Ext:fiberwise-2ii}
			For each point $s \in S$, $\Ext_{X_s}^j(\sE|_{X_s}, \sF|_{X_s}) = 0$ for  $j \not\in [a,b]$.
			\item 
			\label{cor:Ext:fiberwise-2iii}
			For each closed point $s \in S$, $\Ext_{X_s}^j(\sE|_{X_s}, \sF|_{X_s}) = 0$ for $j \not \in [a,b]$.
	\end{enumerate}
	If any of the above condition is satisfied, there is a canonical equivalence
		$$f_* \sHom_X(\sE, \sF) \simeq \shQ_{f}(\sE,\sF)^\vee,$$
	    and in particular, $f_* \sHom_X(\sE, \sF)$ is a perfect complex of Tor-amplitude in $[-b, -a]$.
\end{enumerate}
\end{corollary}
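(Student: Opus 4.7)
My plan is to reduce everything to the fiber $\mathcal{Q}_{f_s}(\mathscr{E}|_{X_s},\mathscr{F}|_{X_s})$ via base change, identify its homotopy groups with the fiberwise Ext spaces, and then apply standard fiberwise criteria for almost perfect complexes.

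\textbf{Reduction to fibers.}
By Proposition~\ref{thm:Q}\eqref{thm:Q-2}, for every point $s\in S$ there is an equivalence
\[
i_s^*\mathcal{Q}_f(\mathscr{E},\mathscr{F})\simeq\mathcal{Q}_{f_s}(\mathscr{E}|_{X_s},\mathscr{F}|_{X_s}).
\]
Applying the internal universal property from Proposition~\ref{thm:Q}\eqref{thm:Q-1} over $\Spec\kappa(s)$ with $\mathscr{G}=\kappa(s)$ (so that $f_{s*}$ is just global sections) yields
\[
\mathcal{Q}_{f_s}(\mathscr{E}|_{X_s},\mathscr{F}|_{X_s})^\vee \simeq \mathrm{R}\mathrm{Hom}_{X_s}(\mathscr{E}|_{X_s},\mathscr{F}|_{X_s}).
\]
Since $\mathscr{E}$ is almost perfect, so is $\mathcal{Q}_f(\mathscr{E},\mathscr{F})$ by Proposition~\ref{thm:Q}\eqref{thm:Q-3}, hence so is each $\mathcal{Q}_{f_s}$; over the field $\kappa(s)$ this forces each $\pi_j(\mathcal{Q}_{f_s})$ to be a finite-dimensional vector space. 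Combining the splitting $C\simeq\bigoplus_n\pi_n(C)[n]$ valid for any complex of $\kappa(s)$-vector spaces with the previous equivalence yields natural identifications
\[
\pi_j(\mathcal{Q}_{f_s})^\vee \cong \mathrm{Ext}^j_{X_s}(\mathscr{E}|_{X_s},\mathscr{F}|_{X_s}), \qquad j\in\mathbb{Z},
\]
and finite-dimensionality of both sides gives the equivalence $\pi_j(\mathcal{Q}_{f_s})=0 \iff \mathrm{Ext}^j_{X_s}(\mathscr{E}|_{X_s},\mathscr{F}|_{X_s})=0$.

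\textbf{From fibers to $\mathcal{Q}_f$.}
The implications \eqref{cor:Ext:fiberwise-1i}$\Rightarrow$\eqref{cor:Ext:fiberwise-1ii} and \eqref{cor:Ext:fiberwise-2i}$\Rightarrow$\eqref{cor:Ext:fiberwise-2ii} now follow from the reduction together with the fact that derived pullback is right $t$-exact (respectively, preserves Tor-amplitude bounds), and \eqref{cor:Ext:fiberwise-1ii}$\Rightarrow$\eqref{cor:Ext:fiberwise-1iii} as well as \eqref{cor:Ext:fiberwise-2ii}$\Rightarrow$\eqref{cor:Ext:fiberwise-2iii} are trivial. For the remaining converses I invoke the standard fiberwise criteria for almost perfect complexes on a derived scheme: such a complex is $n$-connective (respectively, has Tor-amplitude in $[a,b]$) as soon as its derived pullback to every closed residue field is so---these are derived Nakayama-type statements, see e.g.\ \cite[Proposition~2.7.3.2, Corollary~6.1.4.7]{SAG}. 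Together with the $\pi$--$\mathrm{Ext}$ translation from the previous step this furnishes \eqref{cor:Ext:fiberwise-1iii}$\Rightarrow$\eqref{cor:Ext:fiberwise-1i} and \eqref{cor:Ext:fiberwise-2iii}$\Rightarrow$\eqref{cor:Ext:fiberwise-2i}; the fact that almost perfect plus finite Tor-amplitude is perfect then promotes the Tor-amplitude statement to perfectness.

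\textbf{Duality formula and main obstacle.}
A quasi-compact separated derived scheme is a perfect stack by \cite[Proposition~3.19]{BFNjams}, so Proposition~\ref{thm:Q}\eqref{thm:Q-1} applies with $\mathscr{G}=\mathscr{O}_S$ to give
\[
\mathcal{Q}_f(\mathscr{E},\mathscr{F})^\vee \simeq f_*\mathcal{H}om_X(\mathscr{E},\mathscr{F}),
\]
and dualizing a perfect complex of Tor-amplitude $[a,b]$ produces one of Tor-amplitude $[-b,-a]$, giving the final assertion. The step requiring the most care will be the derived Nakayama argument, namely lifting fiberwise vanishing of Ext groups at closed points to a global vanishing statement for $\mathcal{Q}_f(\mathscr{E},\mathscr{F})$ on $S$; everything else is formal from the universal property, base change, and the semisimplicity of the derived category over a field.
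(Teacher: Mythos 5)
Your proof is correct and follows essentially the same route as the paper's: base change the $\mathcal{Q}$-complex to the fiber, use the universal property with $\mathscr{G}=\kappa(s)$ to identify its $\kappa(s)$-dual with $\mathrm{RHom}_{X_s}(\mathscr{E}|_{X_s},\mathscr{F}|_{X_s})$, and then invoke the SAG fiberwise criteria for an almost perfect complex to be $n$-connective (respectively, to have Tor-amplitude in $[a,b]$). One small remark: for the connectivity criterion the paper cites \cite[Corollary~2.7.4.3]{SAG} (noting in a footnote that the statement there is for all points but the argument extends to closed points), whereas you cite Proposition~2.7.3.2; you should double-check that this is the intended reference, but the argument itself is sound.
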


\begin{proof}
For any point $s \in S$, let $\kappa(s)$ denote its residue field and consider the Cartesian diagram above. By Proposition~\ref{thm:Q} \eqref{thm:Q-1}\&\eqref{thm:Q-2}, there is a canonical equivalence
\[
    \sHom_{\kappa(s)}(i_s^* \shQ_{f}(\sE,\sF), \kappa(s)) \simeq \sHom_{X_s}(\sE|_{X_s}, \sF|_{X_s}) \in \D(\kappa(s)).
\]
The implications \eqref{cor:Ext:fiberwise-1i} $\Rightarrow$ \eqref{cor:Ext:fiberwise-1ii} and \eqref{cor:Ext:fiberwise-2i} $\Rightarrow$ \eqref{cor:Ext:fiberwise-2ii} follow immediately: the (derived) fiber of $\shQ_{f}(\sE,\sF)$ at $s$ detects the relevant vanishing.

The implications \eqref{cor:Ext:fiberwise-1ii} $\Rightarrow$ \eqref{cor:Ext:fiberwise-1iii} and \eqref{cor:Ext:fiberwise-2ii} $\Rightarrow$ \eqref{cor:Ext:fiberwise-2iii} are tautological. 

For the implications ``$\eqref{cor:Ext:fiberwise-1iii}  \implies \eqref{cor:Ext:fiberwise-1i}$" and  ``$\eqref{cor:Ext:fiberwise-2iii} \implies \eqref{cor:Ext:fiberwise-2i}$", we may assume $S$ is affine. Then, by the equivalence above and the assumptions \eqref{cor:Ext:fiberwise-1iii} or \eqref{cor:Ext:fiberwise-2iii}, we have that $i_s^* \shQ_{f}(\sE,\sF)$ is $n$-connective (resp., has Tor-amplitude in $[a,b]$) for all closed points $s$. Since $\shQ_{f}(\sE,\sF)$ is almost perfect by Proposition~\ref{thm:Q}\eqref{thm:Q-3}, these desired implications follow from \cite[Corollary 2.7.4.3]{SAG}\footnote{Notice that while \cite[Corollary 2.7.4.3]{SAG} proves the assertions under the connectivity assumptions over all points $s$, the same argument applies to closed points.} (for connectivity) and \cite[Corollary 6.1.4.7]{SAG} (for Tor-amplitude). In the classical (underived) setting, these results follow from \cite[\href{https://stacks.math.columbia.edu/tag/068V}{Tag 068V}]{stacks-project}.

Finally, if \eqref{cor:Ext:fiberwise-2i} holds, the equivalence of Proposition \ref{thm:Q}.\eqref{thm:Q-1} applied to $\sG = \sO_S$ gives a canonical equivalence $f_* \sHom_X(\sE, \sF) \simeq \shQ_{f}(\sE,\sF)^\vee$, and hence $f_* \sHom_X(\sE, \sF)$ is perfect of Tor-amplitude in $[-b, -a]$.
\end{proof}

\subsection{Relative Serre duality via the $\mathcal{Q}$-complex}
We now present a version of relative Serre duality expressed in terms of the $\mathcal{Q}$-complex.

Let $f \colon X \to S$ be a morphism of derived stacks. Suppose that $f$ is proper, representable, locally almost of finite presentation, and locally of finite Tor-amplitude. Under these conditions, the pushforward functor $f_*$ admits a right adjoint $f^! \colon \Dqc(S) \to \Dqc(X)$ (\cite[Theorem 3.8.(2a)]{J22a}; cf. \cite[Proposition 6.4.2.1]{SAG}). We define the \emph{relative dualizing complex} as 
	\[\omega_f^\bullet := f^!(\mathscr{O}_S) \in \Dqc(X).\] 
The formation of $f^!$ and $\omega_f^\bullet$ commutes with arbitrary base change, and there is a canonical equivalence of functors:
\[
f^!(\underline{\hphantom{A}}) \simeq f^*(\underline{\hphantom{A}}) \otimes \omega_f^\bullet.
\]
See \cite[\S 6.4.2]{SAG} (or \cite[Theorem 3.8.(2b)\&(2c)]{J22a}) for more details.

\begin{proposition}[Relative Serre duality]
\label{prop:Serre:Q}
Let $f \colon X \to S$ be a morphism of derived stacks that is proper, representable, locally almost of finite presentation, and locally of finite Tor-amplitude.
\begin{enumerate}
    \item  
    \label{prop:Serre:Q-1} 
    The relative dualizing complex $\omega_f^\bullet := f^!(\mathscr{O}_S)$ is relatively perfect over $S$.
    \item 
    \label{prop:Serre:Q-2}
    The canonical map $\mathcal{Q}_{f}(\omega_f^\bullet, \omega_f^\bullet) \to \mathscr{O}_S$ is an equivalence.
    \item   
    \label{prop:Serre:Q-3}
    For any pair $\mathscr{E}, \mathscr{F} \in \Dqc(X)$, with $\mathscr{E}$ perfect and $\mathscr{F}$ relatively perfect over $S$, the complexes $\mathcal{Q}_f(\mathscr{E},\mathscr{F})$ and $\mathcal{Q}_{f}(\mathscr{F}, \mathscr{E} \otimes \omega_f^\bullet)$ are perfect, and there is a functorial equivalence
    \[
    \mathcal{Q}_{f}(\mathscr{F}, \mathscr{E} \otimes \omega_f^\bullet) \simeq \mathcal{Q}_{f}(\mathscr{E}, \mathscr{F})^\vee.
    \]
\end{enumerate}
\end{proposition}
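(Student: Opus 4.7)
The overall plan is to exploit the universal property of the $\mathcal{Q}$-complex (Proposition~\ref{thm:Q}\eqref{thm:Q-1}) together with the adjunction $f_* \dashv f^!$ and the formula $f^!(\blank) \simeq f^*(\blank) \otimes \omega_f^\bullet$. I would handle (1) first, since it ensures the $\mathcal{Q}$-complexes appearing in (2) and (3) are well-defined by Construction~\ref{constr:Q}. Then I would deduce (3) directly, and finally obtain (2) as a specialization of (3).

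For (1), I would first observe that $\omega_f^\bullet = f^!(\sO_S)$ is almost perfect: this is a standard preservation property of $f^!$ for $f$ proper, representable, and locally almost of finite presentation (cf.\ \cite[\S 6.4.5]{SAG}). For the finite relative Tor-amplitude, I would use that $f$ being locally of finite Tor-amplitude, proper and representable forces $f_*$ to have bounded cohomological amplitude (Lemma~\ref{lem:relative-perf}\eqref{lem:relative-perf-1}), and hence its right adjoint $f^!$ has bounded cohomological amplitude on locally truncated complexes as well. Applying this to discrete $\sG \in \Dqc(S)^\heartsuit$, the equivalence $f^!(\sG) \simeq \omega_f^\bullet \otimes f^*\sG$ yields the Tor-amplitude bound for $\omega_f^\bullet$. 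Alternatively, one can reduce to fibers by base-change compatibility of $f^!$ and Corollary~\ref{cor:Ext:fiberwise}, where the fiberwise dualizing complex of a proper scheme over a field is bounded.

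For (3), the universal property of Proposition~\ref{thm:Q}\eqref{thm:Q-1} yields, for any $\sG \in \Dqc(S)$, a chain of canonical equivalences
\[
\Map_S(\mathcal{Q}_f(\sF, \sE \otimes \omega_f^\bullet), \sG) \simeq \Map_X(\sF, \sE \otimes \omega_f^\bullet \otimes f^*\sG) \simeq \Map_X(\sE^\vee \otimes \sF, f^!\sG) \simeq \Map_S(f_*\sHom_X(\sE,\sF), \sG),
\]
where the second step uses dualizability of the perfect object $\sE$, the third is the adjunction $f_* \dashv f^!$, and the last identifies $\sE^\vee \otimes \sF \simeq \sHom_X(\sE,\sF)$. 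Thus $\mathcal{Q}_f(\sF, \sE \otimes \omega_f^\bullet) \simeq f_*\sHom_X(\sE, \sF)$. Now, since $\sE^\vee \otimes \sF$ is relatively perfect, Lemma~\ref{lem:relative-perf}\eqref{lem:relative-perf-2} gives that $f_*(\sE^\vee \otimes \sF)$ is perfect, and hence $\mathcal{Q}_f(\sE, \sF) \simeq f_*(\sE^\vee \otimes \sF)^\vee$ is perfect. Corollary~\ref{cor:Ext:fiberwise}\eqref{cor:Ext:fiberwise-2} then identifies $f_*\sHom_X(\sE,\sF) \simeq \mathcal{Q}_f(\sE,\sF)^\vee$, giving the desired functorial equivalence.

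Part (2) follows by specializing (3) to $\sE = \sO_X$ and $\sF = \omega_f^\bullet$, and identifying $\mathcal{Q}_f(\sO_X, \omega_f^\bullet)$ via the universal property combined with the $f_* \dashv f^!$ adjunction. The canonical map is pinned down by tracking $\id_{\omega_f^\bullet} \in \End_X(\omega_f^\bullet) \simeq \Map_S(\mathcal{Q}_f(\omega_f^\bullet, \omega_f^\bullet), \sO_S)$. I expect the main obstacle to lie precisely in this last step—identifying this canonical map with the Grothendieck trace and checking that it yields the equivalence claimed—since the direct calculation via (3) produces a map whose equivalence reflects a nontrivial duality statement; this likely requires either a base-change-to-fibers reduction combined with classical Serre duality, or an explicit construction of the trace via the counit of $f_* \dashv f^!$ evaluated at $\sO_S$.
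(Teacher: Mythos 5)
Your plan and execution are essentially the same as the paper's for parts (1) and (3). For (1), the paper likewise bounds the cohomological amplitude of $f_*$ (citing \cite[Corollary 3.4.2.3]{SAG} rather than Lemma~\ref{lem:relative-perf}\eqref{lem:relative-perf-1}, which is stated for relative Tor-amplitude of $f_*\sF$, not cohomological amplitude of $f_*$ per se, but the input is the same), passes to the right adjoint $f^! \simeq f^*(\blank)\otimes\omega_f^\bullet$, and combines with almost perfectness of $\omega_f^\bullet$. For (3), your chain of equivalences is identical to the paper's; the one thing you should be careful about is that Proposition~\ref{thm:Q}\eqref{thm:Q-1} is stated when $S$ is a perfect stack, so for a general derived stack $S$ the paper first base changes to an arbitrary affine $\eta\colon T\to S$, establishes the equivalence $\mathcal{Q}_{f_T}(\sF_T,\sE_T\otimes\omega_{f_T}^\bullet)\simeq \mathcal{Q}_{f_T}(\sE_T,\sF_T)^\vee$ over $T$, and observes that it is compatible with base change; invoking the universal property directly over $S$ as you do would require either $S$ perfect or the more restricted setting of Remark~\ref{rmk:Q:univ}.

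For (2), you stop short and worry about identifying the canonical map with the Grothendieck trace; the paper avoids this entirely by applying (3) a second time with $\sE=\sF=\sO_X$, giving $\mathcal{Q}_f(\sO_X,\omega_f^\bullet)\simeq \mathcal{Q}_f(\sO_X,\sO_X)^\vee$, so that together with your first application one gets $\mathcal{Q}_f(\omega_f^\bullet,\omega_f^\bullet)\simeq \mathcal{Q}_f(\sO_X,\omega_f^\bullet)^\vee\simeq \mathcal{Q}_f(\sO_X,\sO_X)$. The paper then declares $\mathcal{Q}_f(\sO_X,\sO_X)\simeq\sO_S$. Note, however, that by Construction~\ref{constr:Q:perf} this is $(f_*\sO_X)^\vee$, and the identification with $\sO_S$ amounts to $f_*\sO_X\simeq\sO_S$, which does not follow from the stated hypotheses alone (take $f$ the fold map $S\sqcup S\to S$). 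Your instinct that a genuine duality input is needed here is therefore sound: either one must add a cohomological connectedness hypothesis to the statement of (2), or one must invoke exactly the trace argument you sketch. In the applications of this paper (families of geometrically integral curves) one always has $f_*\sO_X\simeq\sO_S$, so the gap is harmless in context, but your caution is warranted.
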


\begin{proof}
For \eqref{prop:Serre:Q-1}, we may assume $S$ is affine. By \cite[Corollary 3.4.2.3]{SAG} (or \cite[Proposition 3.9.2]{Lip} in the classical case), there exists $N \geq 0$ such that $f_{*}$ sends $\Dqc(X)_{\geq 0}$ into $\Dqc(S)_{\geq -N}$. It follows that its right adjoint satisfies:
\[
f^! \simeq f^*(\underline{\hphantom{A}}) \otimes \omega_f^\bullet \colon \Dqc(S)_{\leq 0} \to \Dqc(X)_{\leq N}.
\]
Since $\omega_f^\bullet$ is almost perfect (\cite[Proposition 6.4.4.1]{SAG}), and by Remark~\ref{rmk:relative-Tor_preserves_truncated}, we conclude that $\omega_f^\bullet$ is relatively perfect .

For \eqref{prop:Serre:Q-3}, let $\eta \colon T \to S$ be any morphism from an affine derived scheme, and denote by $f_T \colon X_T \to T$ the base change. Let $\mathscr{E}_T$ and $\mathscr{F}_T$ be the pullbacks to $X_T$. For any $\mathscr{G} \in \Dqc(T)$, we have functorial equivalences:
\begin{align*}
\Map_{X_T}\big(\mathscr{F}_T, \mathscr{E}_T \otimes \omega_{f_T}^\bullet \otimes f_T^*(\mathscr{G}) \big)
&\simeq \Map_{X_T}\big(\mathscr{F}_T, \mathscr{E}_T \otimes f_T^!(\mathscr{G})\big) \\
&\simeq \Map_{X_T}\big(\mathscr{E}_T^\vee \otimes \mathscr{F}_T, f_T^!(\mathscr{G})\big) \\
&\simeq \Map_T\big(f_{T*}(\mathscr{E}_T^\vee \otimes \mathscr{F}_T), \mathscr{G}\big) \\
&\simeq \Map_T\big(\mathcal{Q}_{f_T}(\mathscr{E}_T, \mathscr{F}_T)^\vee, \mathscr{G}\big) 
\end{align*}
Thus, there is a canonical equivalence
\[
\mathcal{Q}_{f_T}(\mathscr{F}_T, \mathscr{E}_T \otimes \omega_{f_T}^\bullet) \simeq \mathcal{Q}_{f_T}(\mathscr{E}_T, \mathscr{F}_T)^\vee \in \mathrm{Perf}(T),
\]
compatible with arbitrary base change in $T$. The result follows.

For \eqref{prop:Serre:Q-2}, we apply \eqref{prop:Serre:Q-3} twice to obtain canonical equivalences:
\[
\mathcal{Q}_{f}(\omega_f^\bullet, \mathscr{O}_X \otimes \omega_f^\bullet)
\simeq \mathcal{Q}_{f}(\mathscr{O}_X, \omega_f^\bullet)^\vee
\simeq \mathcal{Q}_{f}(\mathscr{O}_X, \mathscr{O}_X)
\simeq \mathscr{O}_S.
\]
\end{proof}

\begin{remark}
In the context of Proposition~\ref{prop:Serre:Q}, let $a \leq b$, and assume $S$ is a derived scheme. The conditions \eqref{cor:Ext:fiberwise-2i}--\eqref{cor:Ext:fiberwise-2iii} of Corollary~\ref{cor:Ext:fiberwise}\eqref{cor:Ext:fiberwise-2} are equivalent, via the relative Serre duality, to:
\begin{enumerate}
    \item[$(i')$] $\mathcal{Q}_{f}(\mathscr{F}, \mathscr{E} \otimes \omega_f^\bullet)$ is perfect with Tor-amplitude in $[-b, -a]$.
    \item[$(ii')$] For each $s \in S$, $\Ext_{X_s}^j\big(\mathscr{F}|_{X_s}, \mathscr{E}|_{X_s} \otimes \omega_{X_s}^\bullet\big) = 0$ for $j \notin [-b, -a]$.
    \item[$(iii')$] For each closed point $s \in S$, $\Ext_{X_s}^j\big(\mathscr{F}|_{X_s}, \mathscr{E}|_{X_s} \otimes \omega_{X_s}^\bullet\big) = 0$ for $j \notin [-b, -a]$.
\end{enumerate}
Here, $\omega_{X_s}^\bullet := f_s^!(\mathscr{O}_{\kappa(s)})$ is the dualizing complex for the fiber $f_s \colon X_s \to \Spec \kappa(s)$.

This is compatible with Serre duality on fibers: for all $s \in S$ and for all $j \in \mathbb{Z}$, there are natural isomorphisms of $\kappa(s)$-vector spaces:
\[
\Ext_{X_s}^j\big(\mathscr{F}|_{X_s}, \mathscr{E}|_{X_s} \otimes \omega_{X_s}^\bullet\big) \simeq \Ext_{X_s}^{-j}\big(\mathscr{E}|_{X_s}, \mathscr{F}|_{X_s}\big)^\vee.
\]
\end{remark}

\subsection{Relative coherent duality via the $\mathcal{Q}$-complex}
We now formulate a version of relative coherent duality in terms of the $\mathcal{Q}$-complex.

\begin{proposition}[Relative coherent duality]
\label{prop:coh:duality}
Let $f \colon X \to S$ be a morphism of derived geometric stacks that is proper, representable, locally almost of finite presentation, and locally of finite Tor-amplitude. Assume further that $\omega_f^\bullet$ is locally truncated (e.g., if $\mathscr{O}_S$ is locally truncated).
\begin{enumerate}
    \item For any almost perfect complex $\mathscr{F}$, the internal Hom-complex
    \[
    \mathbb{D}(\mathscr{F}) := \sHom_{X}(\mathscr{F}, \omega_f^\bullet)
    \]
    defines a quasi-coherent complex on $X$, which is computed ``smooth locally" by
    \[
    \mathbb{D}(\mathscr{F})|_V = \sHom_{T}(\mathscr{F}|_V, (\omega_f^\bullet)|_V)
    \]
    for $V \to X$ ranging over either of the following categories: all smooth morphisms with $V$ derived affine, or $V = X \times_S U \to X$ where $U \to S$ is smooth and $U$ is derived affine.
    
    \item For any $\mathscr{E}, \mathscr{F} \in \Dqc(X)$, with $\mathscr{E}$ perfect and $\mathscr{F}$ relatively perfect over $S$, there is a functorial equivalence in $\Dqc(S)$:
    \[
    \mathcal{Q}_{f}\big(\mathbb{D}(\mathscr{F}), \mathbb{D}(\mathscr{E})\big) \xrightarrow{\simeq} \mathcal{Q}_{f}(\mathscr{E}, \mathscr{F}).
    \]
\end{enumerate}
\end{proposition}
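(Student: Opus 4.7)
The plan is to dispose of part (1) by smooth descent and then, for part (2), to show that both $\mathcal{Q}$-complexes in the statement are canonically equivalent to the single pushforward $f_*(\mathscr{E} \otimes \mathbb{D}(\mathscr{F}))$, using Proposition~\ref{prop:Serre:Q} on one side and the $f_* \dashv f^!$ adjunction on the other.

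For part~(1), I would proceed as in Remark~\ref{rmk:Q:univ}. Over any smooth chart $V \to X$ with $V$ a derived affine scheme, $V$ is a perfect stack, so the internal Hom $\sHom_V(\mathscr{F}|_V, \omega_f^\bullet|_V)$ is defined. Its formation commutes with smooth base change---in particular along the restriction $V = X \times_S U \to X$ coming from a smooth $U \to S$---by \cite[Lemma 6.5.3.7]{SAG}, using that smooth morphisms have finite Tor-amplitude, $\mathscr{F}|_V$ is almost perfect, and $\omega_f^\bullet|_V$ is locally truncated. Descent then produces a global $\mathbb{D}(\mathscr{F}) \in \Dqc(X)$ with the claimed local description.

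For part~(2), note that $\mathbb{D}(\mathscr{E}) = \mathscr{E}^\vee \otimes \omega_f^\bullet$ is relatively perfect over $S$ by Proposition~\ref{prop:Serre:Q}(1), so both $\mathcal{Q}$-complexes make sense. Proposition~\ref{prop:Serre:Q}(3) yields $\mathcal{Q}_f(\mathscr{F}, \mathscr{E} \otimes \omega_f^\bullet) \simeq \mathcal{Q}_f(\mathscr{E}, \mathscr{F})^\vee$, with both sides perfect. Since the universal property of $\mathcal{Q}_f$ applied with $\mathscr{G} = \mathscr{O}_S$ gives the general identity $\mathcal{Q}_f(A,B)^\vee \simeq f_* \sHom_X(A,B)$, dualizing and invoking $\sHom_X(\mathscr{F}, \mathscr{E} \otimes \omega_f^\bullet) \simeq \mathscr{E} \otimes \mathbb{D}(\mathscr{F})$ (by perfectness of $\mathscr{E}$) produces
\[
\mathcal{Q}_f(\mathscr{E}, \mathscr{F}) \;\simeq\; f_* \sHom_X(\mathscr{F}, \mathscr{E} \otimes \omega_f^\bullet) \;\simeq\; f_*(\mathscr{E} \otimes \mathbb{D}(\mathscr{F})).
\]
For the other target, the universal property (Remark~\ref{rmk:Q:univ}) gives, for any locally truncated $\mathscr{G} \in \Dqc(S)$,
\[
\sHom_S\bigl(\mathcal{Q}_f(\mathbb{D}(\mathscr{F}), \mathbb{D}(\mathscr{E})), \mathscr{G}\bigr) \simeq f_* \sHom_X\bigl(\mathbb{D}(\mathscr{F}), \mathbb{D}(\mathscr{E}) \otimes f^*\mathscr{G}\bigr).
\]
Substituting $\mathbb{D}(\mathscr{E}) \otimes f^*\mathscr{G} = \mathscr{E}^\vee \otimes f^!(\mathscr{G})$, then applying tensor-hom adjunction (valid because $\mathscr{E}$ is perfect) and the internal form of Grothendieck duality $f_* \sHom_X(-, f^!\mathscr{G}) \simeq \sHom_S(f_*(-), \mathscr{G})$ (which itself follows by Yoneda from the external adjunction $f_* \dashv f^!$ and the projection formula), the right-hand side becomes $\sHom_S(f_*(\mathscr{E} \otimes \mathbb{D}(\mathscr{F})), \mathscr{G})$. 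Yoneda then yields $\mathcal{Q}_f(\mathbb{D}(\mathscr{F}), \mathbb{D}(\mathscr{E})) \simeq f_*(\mathscr{E} \otimes \mathbb{D}(\mathscr{F}))$, and composing with the previous equivalence completes part~(2).

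The main technical obstacle is part~(1): verifying that the smooth-local Hom-complexes really do descend to a global object, which rests in an essential way on the local truncatedness assumption for $\omega_f^\bullet$ via \cite[Lemma~6.5.3.7]{SAG}. Once $\mathbb{D}(\mathscr{F})$ is available, part~(2) is a formal chase using only the universal property of $\mathcal{Q}_f$ (Proposition~\ref{thm:Q}(1) and Remark~\ref{rmk:Q:univ}), relative Serre duality (Proposition~\ref{prop:Serre:Q}), and the $f_* \dashv f^!$ adjunction.
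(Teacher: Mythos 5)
For part~(1) your argument coincides with the paper's, which simply points to the descent mechanism of Remark~\ref{rmk:Q:univ}. For part~(2), however, you take a genuinely different route. The paper first \emph{constructs} an explicit comparison morphism $\mathcal{Q}_f(\mathbb{D}(\mathscr{F}),\mathbb{D}(\mathscr{E})) \to \mathcal{Q}_f(\mathscr{E},\mathscr{F})$ from the evaluation pairing $\mathscr{F}\otimes\mathbb{D}(\mathscr{F}) \to \omega_f^\bullet$ and the universal property of $\mathcal{Q}_f$, and then proves \emph{that specific map} is an equivalence by a stepwise reduction: first to $\mathscr{E}=\mathscr{O}_X$, then to $\mathscr{G}=\mathscr{O}_S$ (both sides commute with colimits in $\mathscr{G}$, so one may take $\mathscr{G}$ perfect and then reduce further), then to $\mathscr{F}=\mathscr{O}_X$, where the statement collapses to the equivalence $\mathcal{Q}_f(\omega_f^\bullet,\omega_f^\bullet)\simeq\mathscr{O}_S$ of Proposition~\ref{prop:Serre:Q}\eqref{prop:Serre:Q-2}; finally it globalizes from derived affine $S$ by smooth descent. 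You instead avoid constructing a map a priori and identify both $\mathcal{Q}$-complexes with the single object $f_*(\mathscr{E}\otimes\mathbb{D}(\mathscr{F}))$: the complex $\mathcal{Q}_f(\mathscr{E},\mathscr{F})$ via Serre duality (Prop.~\ref{prop:Serre:Q}\eqref{prop:Serre:Q-3}) plus the identity $\mathcal{Q}_f(A,B)^\vee\simeq f_*\sHom_X(A,B)$; the complex $\mathcal{Q}_f(\mathbb{D}(\mathscr{F}),\mathbb{D}(\mathscr{E}))$ via tensor--hom and the internal form of Grothendieck duality for $f_*\dashv f^!$, then Yoneda. Both approaches are sound; yours is shorter and more conceptual, but it imports internal Grothendieck duality as an extra black box, whereas the paper stays entirely within the $\mathcal{Q}$-complex machinery already developed. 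One small technical caveat in your write-up: invoking Remark~\ref{rmk:Q:univ} for the universal property of $\mathcal{Q}_f(\mathbb{D}(\mathscr{F}),\mathbb{D}(\mathscr{E}))$ tacitly requires $\mathbb{D}(\mathscr{F})$ to be almost perfect, which is not automatic in this generality; the clean fix is to run the argument over derived affine $S$ first, where Proposition~\ref{thm:Q}\eqref{thm:Q-1} applies with no almost-perfectness hypothesis on the first slot, and then globalize by the base-change compatibility of $\mathcal{Q}_f$ — exactly the last sentence of the paper's own proof.
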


\begin{proof}
Part (1) follows by applying the argument of Remark~\ref{rmk:Q:univ} to the almost perfect complex $\mathscr{F}$ and the locally truncated complex $\omega_f^\bullet$.

For part (2), we first consider the case where $S$ is derived affine. By the definition of the internal Hom complex, the identity map $\mathbb{D}(\mathscr{F}) \to \mathbb{D}(\mathscr{F})$ corresponds to a canonical map $\mathscr{F} \otimes \mathbb{D}(\mathscr{F}) \to \omega_{f}^\bullet$. Thus, for any $\mathscr{G} \in \Dqc(S)$, there is an induced canonical map
\[
(\mathscr{E}^\vee \otimes f^*\mathscr{G}) \otimes \mathscr{F} \otimes \mathbb{D}(\mathscr{F}) 
\to 
(\mathscr{E}^\vee \otimes f^* \mathscr{G}) \otimes \omega_{f}^\bullet,
\]
which in turn induces
\[
\mathscr{E}^\vee \otimes \mathscr{F} \otimes f^*\mathscr{G} 
\to 
\sHom_X\left( \mathbb{D}(\mathscr{F}),\, \mathscr{E}^\vee \otimes \omega_{f}^\bullet \otimes f^* \mathscr{G} \right).
\]
This yields a functorial morphism
\begin{equation}\label{eqn:coh.duality}
f_* \sHom_X \left(\mathscr{E}, \mathscr{F} \otimes f^* \mathscr{G}\right)
\to 
f_* \sHom_X\left(\mathbb{D}(\mathscr{F}),\, \mathbb{D}(\mathscr{E}) \otimes f^* \mathscr{G} \right)
\end{equation}
which, by the universal property Proposition~\ref{thm:Q}~\eqref{thm:Q-1}, induces a canonical map 
\[
\mathcal{Q}_f\big(\mathbb{D}(\mathscr{F}), \mathbb{D}(\mathscr{E})\big) \to \mathcal{Q}_f(\mathscr{E},\mathscr{F}).
\] 
It remains to show that the morphism in~\eqref{eqn:coh.duality} is an isomorphism. First, we may assume $\mathscr{E} = \mathscr{O}_X$. Since $f_*$ preserves small colimits, and the formation of~\eqref{eqn:coh.duality} commutes with small colimits in the variable $\mathscr{G}$, we may assume $\mathscr{G}$ is perfect. As this also commutes with cones and direct sums in the variable  $\mathscr{G}$, we reduce to $\mathscr{G} = \mathscr{O}_S$. A similar argument reduces to the case $\mathscr{F} = \mathscr{O}_X$. In this situation, the desired isomorphism follows from the canonical isomorphism
\[
\mathcal{Q}_f(\omega_f^\bullet, \omega_f^\bullet) \to \mathscr{O}_S
\]
given by Proposition~\ref{prop:Serre:Q}~\eqref{prop:Serre:Q-2}.

For general $S$, since both sides of the canonical morphism $\mathcal{Q}_f\big(\mathbb{D}(\mathscr{F}), \mathbb{D}(\mathscr{E})\big) \to \mathcal{Q}_f(\mathscr{E},\mathscr{F})$
can be computed over smooth morphisms $U \to S$ with $U$ derived affine, and the formation is compatible with base change along morphisms $U \to U'$ in $ \mathrm{Aff}_{\mathrm{Sm}/S}$, the result follows from the derived affine case.
\end{proof}

The canonical map $\mathscr{F} \to \mathbb{D}(\mathbb{D}(\mathscr{F}))$ need not to be an equivalence in general. It is an equivalence, however, if $S$ is Gorenstein (i.e., if $S$ possesses an invertible absolute dualizing complex).

\subsection{Simplicity via the $\mathcal{Q}$-complex}
\label{subsec:simple}
Let $f \colon X \to S$ be a proper, representable, and locally almost finitely presented morphism of derived stacks, and let $\mathscr{E} \in \Dqc(X)$ be relatively perfect over $S$. The identity morphism $\mathscr{E} \to \mathscr{E}$ gives rise to a canonical map
\[
\mathcal{Q}_{f}(\mathscr{E}, \mathscr{E}) \to \mathscr{O}_S.
\]
We say that $\mathscr{E}$ is \emph{$S$-simple} if the fiber of this map is $1$-connective, i.e., its homotopy groups vanish in degrees $<1$. By Corollary~\ref{cor:Ext:fiberwise}, this condition admits the following  characterization:

\begin{lemma}
\label{lem:simple}
In the above setting, $\mathscr{E}$ is $S$-simple if and only if, for any $s \in S$, the natural map
\[
\kappa(s) \to\RR \operatorname{Hom}_{X_s}(\mathscr{E}|_{X_s}, \mathscr{E}|_{X_s})
\]
induces an isomorphism $\kappa(s) \simeq \mathrm{Ext}_{X_s}^0(\mathscr{E}|_{X_s}, \mathscr{E}|_{X_s})$, and $\mathrm{Ext}_{X_s}^j(\mathscr{E}|_{X_s}, \mathscr{E}|_{X_s}) = 0$ for all $j < 0$.
\end{lemma}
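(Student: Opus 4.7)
The plan is to rephrase the $S$-simplicity condition as the $1$-connectivity of an almost perfect complex, and then apply a fiberwise connectivity criterion in the spirit of Corollary~\ref{cor:Ext:fiberwise}. Set $C := \fib\bigl(\mathcal{Q}_{f}(\mathscr{E},\mathscr{E}) \to \mathscr{O}_S\bigr)$; by definition, $\mathscr{E}$ is $S$-simple iff $C$ is $1$-connective. Since $\mathscr{E}$ is relatively perfect over $S$, and hence almost perfect, Proposition~\ref{thm:Q}.\eqref{thm:Q-3} implies that $\mathcal{Q}_{f}(\mathscr{E},\mathscr{E})$ is almost perfect, and therefore so is $C$.

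The forward direction is immediate: pullback is right $t$-exact, so $1$-connectivity of $C$ implies $1$-connectivity of $i_s^{*} C$ for every point $s \in S$, and it will suffice to extract the claimed Ext conditions from this. Conversely, by \cite[Corollary 2.7.4.3]{SAG} (the same input used in the proof of Corollary~\ref{cor:Ext:fiberwise}), an almost perfect complex on $S$ is $1$-connective provided that its derived fiber at every closed point is $1$-connective. Thus the lemma reduces to showing that, for each (closed) point $s \in S$, the $1$-connectivity of $i_s^{*} C$ is equivalent to the fiberwise Ext conditions in the statement.

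Base-changing the fiber sequence defining $C$ along $i_s$ and invoking Proposition~\ref{thm:Q}.\eqref{thm:Q-2}, we obtain a fiber sequence
\[
i_s^{*} C \longrightarrow \mathcal{Q}_{f_s}(\mathscr{E}|_{X_s}, \mathscr{E}|_{X_s}) \xrightarrow{\;\alpha_s\;} \kappa(s)
\]
in $\D(\kappa(s))$. The map $\alpha_s$ corresponds, under the universal property of Proposition~\ref{thm:Q}.\eqref{thm:Q-1} (with $\mathscr{G}=\kappa(s)$), to the identity $\id_{\mathscr{E}|_{X_s}}$, since the analogous statement holds by construction for the map $\mathcal{Q}_{f}(\mathscr{E},\mathscr{E}) \to \mathscr{O}_S$. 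Passing to $\kappa(s)$-linear duals, the same universal property identifies $\mathcal{Q}_{f_s}(\mathscr{E}|_{X_s}, \mathscr{E}|_{X_s})^{\vee}$ with $\RR\Hom_{X_s}(\mathscr{E}|_{X_s}, \mathscr{E}|_{X_s})$, and $\alpha_s^{\vee}$ with the natural unit map $\kappa(s) \to \RR\Hom_{X_s}(\mathscr{E}|_{X_s}, \mathscr{E}|_{X_s})$ that sends $1$ to $\id$.

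Finally, running the long exact sequence on homotopy groups for the fiber sequence above (and using $\pi_j \kappa(s) = 0$ for $j \neq 0$), $i_s^{*} C$ is $1$-connective if and only if $\pi_j \mathcal{Q}_{f_s}(\mathscr{E}|_{X_s}, \mathscr{E}|_{X_s}) = 0$ for all $j < 0$ and $\pi_0 \alpha_s$ is an isomorphism. Since $\mathcal{Q}_{f_s}(\mathscr{E}|_{X_s}, \mathscr{E}|_{X_s})$ is almost perfect over the field $\kappa(s)$, all its homotopy groups are finite-dimensional, so $\kappa(s)$-linear duality is termwise exact and translates the above into $\Ext^j_{X_s}(\mathscr{E}|_{X_s}, \mathscr{E}|_{X_s}) = 0$ for $j < 0$ together with $\kappa(s) \xrightarrow{\sim} \Ext^0_{X_s}(\mathscr{E}|_{X_s}, \mathscr{E}|_{X_s})$. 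The only substantive bookkeeping is the identification $\alpha_s \leftrightarrow \id$ via the universal property; after that, the rest is a direct long-exact-sequence calculation.
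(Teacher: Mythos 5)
Your proof is correct and it fleshes out exactly the argument the paper leaves implicit (the paper simply writes "By Corollary~\ref{cor:Ext:fiberwise}, this condition admits the following characterization" and states the lemma with no proof). You correctly observe that Corollary~\ref{cor:Ext:fiberwise} does not apply verbatim — since the object of interest is $C=\fib(\mathcal{Q}_f(\mathscr{E},\mathscr{E})\to\mathscr{O}_S)$ rather than a $\mathcal{Q}$-complex itself — and instead rerun the same mechanism: $C$ is almost perfect, so connectivity can be checked on derived fibers at closed points via \cite[Cor.\ 2.7.4.3]{SAG} (after the standard reduction, implicit here as in the text before Corollary~\ref{cor:Ext:fiberwise}, to $S$ affine); the fiber sequence base-changes; and the universal property identifies $\alpha_s^\vee$ with the unit $\kappa(s)\to\RR\Hom_{X_s}(\mathscr{E}|_{X_s},\mathscr{E}|_{X_s})$, after which the long exact sequence and $\kappa(s)$-linear duality give exactly the stated Ext conditions. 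This is the intended argument.
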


Thus, our definition agrees with that of To\"en--Vaqui\'e \cite[Definition 5.2]{TV08}. Together with \cite[Lemma 2.36]{Mon25}, this shows that our definition is also compatible with Montagnani's \cite[Definition 2.34]{Mon25} in the non-archimedean setting. When $\mathcal{Q}_{f}(\mathscr{E}, \mathscr{E})$ is perfect, this equivalence follows immediately from Corollary~\ref{cor:Ext:fiberwise}\eqref{cor:Ext:fiberwise-2}: taking duals, our notion of $\mathscr{E}$ being $S$-simple is equivalent to requiring that the cofiber of $\mathscr{O}_S \to f_* \sHom_{X}(\mathscr{E}, \mathscr{E})$ has Tor-amplitude in strictly negative degrees, which is precisely the condition of \cite[Definition 2.34]{Mon25}.

\subsection{Cotangent complexes via the $\mathcal{Q}$-complex}

Let $X$ be a proper scheme over $\mathbb{C}$. Following \cite{PS}, consider the derived moduli stack $\underline{\mathbf{Coh}}(X)$, which assigns to each derived affine $\mathbb{C}$-scheme $T$ the full space spanned by almost perfect complexes on $X \times_{\mathbb{C}} T$ with relative Tor-amplitude $\le 0$ over $T$. Let
\[
f \colon \mathscr{X} := \underline{\mathbf{Coh}}(X) \times_{\mathbb{C}} X \to \underline{\mathbf{Coh}}(X)
\]
be the projection, and let $\mathscr{U} \in \Dqc(\mathscr{X})$ be the universal almost perfect complex.

Using the $\mathcal{Q}$-complex of Construction~\ref{constr:Q}, we obtain an explicit description of the cotangent complex of $\underline{\mathbf{Coh}}(X)$, whose existence was established by Porta--Sala \cite[Lemma 2.21]{PS}:

\begin{lemma}
\label{lem:cotangent:Coh}
In the above setting, the derived stack $\underline{\mathbf{Coh}}(X)$ admits an almost perfect cotangent complex over~$\mathbb{C}$, given explicitly by
\[
\mathbb{L}_{\underline{\mathbf{Coh}}(X)/\mathbb{C}} \simeq \mathcal{Q}_{f}(\mathscr{U}, \mathscr{U}[1]).
\]
\end{lemma}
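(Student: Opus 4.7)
The plan is to verify that $\mathcal{Q}_f(\mathscr{U}, \mathscr{U}[1])$ satisfies the same universal property as the cotangent complex, and then to match it with the object whose existence is supplied by Porta--Sala~\cite[Lemma 2.21]{PS}.

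First I would confirm that Construction~\ref{constr:Q} applies. The projection $f$ is proper (since $X/\mathbb{C}$ is proper), representable, and locally almost of finite presentation; by the defining moduli problem of $\underline{\mathbf{Coh}}(X)$, the universal complex $\mathscr{U}$ is almost perfect with relative Tor-amplitude $\le 0$ over $\underline{\mathbf{Coh}}(X)$, so $\mathscr{U}[1]$ satisfies the relative-perfectness hypothesis of the second argument. Thus $\mathcal{Q}_f(\mathscr{U}, \mathscr{U}[1]) \in \Dqc(\underline{\mathbf{Coh}}(X))$ is defined, and it is almost perfect by Proposition~\ref{thm:Q}\eqref{thm:Q-3}.

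Next I would test the object against $T$-points. Given any derived affine $T$ and morphism $x \colon T \to \underline{\mathbf{Coh}}(X)$ classifying a complex $\mathscr{U}_T$ on $X_T$, base change (Proposition~\ref{thm:Q}\eqref{thm:Q-2}) yields $x^*\mathcal{Q}_f(\mathscr{U}, \mathscr{U}[1]) \simeq \mathcal{Q}_{f_T}(\mathscr{U}_T, \mathscr{U}_T[1])$, and for any connective $\mathscr{G} \in \Dqc(T)^{\mathrm{cn}}$ the universal property (Proposition~\ref{thm:Q}\eqref{thm:Q-1}; cf.\ Remark~\ref{rmk:Q:univ}) gives a functorial equivalence
\[
\Map_T\bigl(x^*\mathcal{Q}_f(\mathscr{U},\mathscr{U}[1]),\, \mathscr{G}\bigr) \simeq \Map_{X_T}\bigl(\mathscr{U}_T,\, \mathscr{U}_T[1] \otimes f_T^*\mathscr{G}\bigr).
\]

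Finally I would identify the right-hand side with the derivation space $\operatorname{Der}_x(\underline{\mathbf{Coh}}(X)/\mathbb{C}; \mathscr{G})$, i.e.\ with the homotopy fiber at $x$ of the restriction map $\underline{\mathbf{Coh}}(X)(T[\mathscr{G}]) \to \underline{\mathbf{Coh}}(X)(T)$. This is standard derived deformation theory of almost perfect complexes: infinitesimal deformations of $\mathscr{U}_T$ along the trivial square-zero extension $T[\mathscr{G}]$ are classified by $\Map_{X_T}(\mathscr{U}_T,\mathscr{U}_T[1] \otimes f_T^*\mathscr{G})$, and they automatically preserve the relative Tor-amplitude $\le 0$ constraint defining $\underline{\mathbf{Coh}}(X)$. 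This identification, together with the existence of $\mathbb{L}_{\underline{\mathbf{Coh}}(X)/\mathbb{C}}$, is supplied by \cite[Lemma 2.21]{PS}, so Yoneda forces the equivalence $\mathbb{L}_{\underline{\mathbf{Coh}}(X)/\mathbb{C}} \simeq \mathcal{Q}_f(\mathscr{U}, \mathscr{U}[1])$, which is automatically functorial and base-change compatible. The main obstacle is this last identification: aligning the derivation functor of the stack $\underline{\mathbf{Coh}}(X)$ (with its built-in Tor-amplitude condition) with the naive mapping-space functor. This matching is precisely what Porta--Sala establish, so the substantive new content is the clean, manifestly base-change compatible and almost-perfect repackaging via the $\mathcal{Q}$-complex.
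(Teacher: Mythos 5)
Your proof is correct and follows essentially the same route as the paper's: both reduce the identification of the derivation functor with the mapping space $\Map_{X_T}(\mathscr{U}_T, \mathscr{U}_T[1] \otimes f_T^*\mathscr{G})$ to Porta--Sala's argument, then invoke the universal property of $\mathcal{Q}_f$ (Proposition~\ref{thm:Q}) to corepresent that functor, with base-change compatibility supplied by Proposition~\ref{thm:Q}\eqref{thm:Q-2}. The paper phrases this via the affine-chart derivation functor $F_\eta(M) = \mathrm{fib}(\underline{\mathbf{Coh}}(X)(A \oplus M) \to \underline{\mathbf{Coh}}(X)(A))$ while you phrase it via mapping spaces of $T$-points, but these are the same computation, and the only nontrivial input --- that deformations along square-zero extensions are governed by this Ext group and automatically respect the Tor-amplitude constraint --- is in both cases deferred to \cite[Lemma 2.21]{PS}.
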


In particular, the connectivity and Tor-amplitude of $\mathbb{L}_{\underline{\mathbf{Coh}}(X)/\mathbb{C}}$ are determined by the fiberwise criteria of Corollary~\ref{cor:Ext:fiberwise}.

\begin{proof}
Let $T = \operatorname{Spec} (A)$, where $A \in \CAlgDelta_{\mathbb{C}}$, and $\eta \in \underline{\mathbf{Coh}}(X)(A)$, corresponding to an almost perfect complex $\mathscr{F}_{T}$ on $X \times_{\mathbb{C}} T$ of relative Tor-amplitude $\le 0$ over $T$. To compute the cotangent complex, as in \cite[\S 3.2]{DAG}, consider the functor $F_\eta \colon \Mod_A^\cn \to \shS$,
\[
F_\eta(M) = \mathrm{fib}\big(\underline{\mathbf{Coh}}(X)(A \oplus M) \to \underline{\mathbf{Coh}}(X)(A)\big).
\]
The proof of \cite[Lemma 2.21]{PS} shows that this functor is computed by the mapping space
\[
F_{\eta}(M) \simeq \Map_{X \times_{\mathbb{C}} T}\big(\mathscr{F}_{T}, \mathscr{F}_{T}[1] \otimes p_T^*M\big).
\]
The universal property of the $\mathcal{Q}$-complex (Proposition~\ref{thm:Q}(\ref{thm:Q-1})) gives
\[ F_{\eta}(M) \simeq \Map_T\big(\mathcal{Q}_{f_T}(\mathscr{F}_{T}, \mathscr{F}_{T}[1]), M\big).\]
Thus, $F_\eta$ is corepresented by the almost perfect complex $\mathcal{Q}_{f_T}(\mathscr{F}_{T}, \mathscr{F}_{T}[1])$, whose formation commutes with arbitrary base change (Proposition~\ref{thm:Q}(\ref{thm:Q-2})), yielding the result.
\end{proof}

\begin{remark}
This argument applies to any proper, presentable, and locally almost finitely presented morphism $X \to S$, provided that the derived stack $\underline{\mathbf{Coh}}(X/S)$ \footnote{The derived moduli stack $\underline{\mathbf{Coh}}(X/S)$ assigns to each morphism $T = \operatorname{Spec} (A) \to S$ the full subspace spanned by almost perfect complexes on $X_T$ with relative Tor-amplitude $\le 0$ over $T$.} is infinitesimally cohesive. In this case, $\underline{\mathbf{Coh}}(X/S)$ admits an almost perfect relative cotangent complex over $S$:
\[
\mathbb{L}_{\underline{\mathbf{Coh}}(X/S)/S} \simeq \mathcal{Q}_{f}(\mathscr{U}, \mathscr{U}[1]),
\]
where $f \colon X \times_S \underline{\mathbf{Coh}}(X/S) \to \underline{\mathbf{Coh}}(X/S)$ is the projection and $\mathscr{U}$ is the universal complex.
\end{remark}

\section{Moduli of extensions}
\label{sec:extension}

Using the $\mathcal{Q}$-complex constructed in the previous section, together with the theory of derived affine cones, projectivizations, and Grassmannians \cite{J22a, J22b, J23}, we develop a natural derived framework for moduli stacks of extensions. This perspective provides a robust generalization of the classical constructions of \cite{Lag83} to the derived setting.

In this section, we fix a morphism of derived stacks
\[
f \colon \mathscr{X} \to \mathscr{M}
\]
which is proper, representable, locally almost of finite presentation, and locally of finite Tor-amplitude. We adopt this notation, rather than $X \to S$, since our main applications concern the case where $f$ is the universal family over a moduli stack $\mathscr{M}$.

For $\mathscr{E}, \mathscr{F} \in \Dqc(\mathscr{X})$ and any morphism $\eta \colon T \to \mathscr{M}$, we denote by $f_T \colon X_T \to T$ the base change of $f$ along $\eta$, where $X_T := \mathscr{X} \times_{\mathscr{M}} T$ is the derived fiber product. We write $\mathscr{E}_T$ and $\mathscr{F}_T$ for the derived pullbacks of $\mathscr{E}$ and $\mathscr{F}$ to $X_T$. Note that when $\mathscr{X}/\mathscr{M}$ is flat and $\mathscr{M}$ and $T$ are classical, $X_T$ coincides with the classical fiber product $\mathscr{X} \times_{\mathscr{M}}^{\mathrm{cl}} T$.

\subsection{Linear stacks of extensions $\underline{\mathbf{Ext}}_f^{i}$}
The constructions and results in this subsection, though not required for the subsequent applications in this paper, fit naturally into the overall framework developed here and hold promise for future applications.

The \emph{linear stack} $\mathbf{V}(\mathcal{G})$ of a complex $\mathcal{G} \in \Dqc(\mathscr{M})$ over a derived stack $\mathscr{M}$ (see \cite[\S 3.3]{ToenDAG}, \cite[\S 4.1]{J22a} or \cite[\S 2]{HKR}) is the derived moduli stack which, for any morphism $T \to \mathscr{M}$ from a derived affine scheme $T$, assigns the space (i.e., $\infty$-groupoid) of cosections $\mathcal{G}_T \to \mathscr{O}_T$:
\[
\mathbf{V}(\mathcal{G})(T) \simeq \Map_T( \mathcal{G}_T, \mathscr{O}_T).
\]

\begin{definition}
Let $\mathscr{E}, \mathscr{F} \in \Dqc(\mathscr{X})$, with $\mathscr{F}$ relatively perfect over $\mathscr{M}$. Let $\shQ_{f}(\sE,\sF)$ denote the complex of Construction~\ref{constr:Q}. For any integer $i \in \mathbb{Z}$, we define the linear stack
\[
\underline{\mathbf{Ext}}^i_f(\mathscr{E}, \mathscr{F}) := \mathbf{V}(\mathcal{Q}_{f}(\mathscr{E},\mathscr{F}[i])) \to \mathscr{M}.
\]
Explicitly, the functor of points of $\underline{\mathbf{Ext}}^i_f(\mathscr{E}, \mathscr{F})$ is described as follows: for any morphism $T \to \mathscr{M}$ from a derived affine scheme $T$, by Proposition~\ref{thm:Q}~\eqref{thm:Q-1}\&\eqref{thm:Q-2}, we have
\begin{align*}
\underline{\mathbf{Ext}}^i_f(\mathscr{E}, \mathscr{F})(T)
&= \Map_{T}\big( \mathcal{Q}_{f}(\mathscr{E},\mathscr{F}[i])_T, \mathscr{O}_T\big) \\
&\simeq \Map_{T}\big(\mathcal{Q}_{f}(\mathscr{E}_T,\mathscr{F}_T[i]), \mathscr{O}_T\big) \\
&\simeq \Map_{X_T}\big(\mathscr{E}_T, \mathscr{F}_T[i]\big),
\end{align*}
i.e., $\underline{\mathbf{Ext}}^i_f(\mathscr{E}, \mathscr{F})(T)$ is the space of homomorphisms from $\mathscr{E}_T$ to $\mathscr{F}_T[i]$ in $\Dqc(X_T)$. In particular, the set of connected components recovers the usual Ext-group:
\[
\pi_0\big(\underline{\mathbf{Ext}}_f^i (\mathscr{E},\mathscr{F})(T)\big) = \mathrm{Ext}_{X_T}^i(\mathscr{E}_T, \mathscr{F}_T).
\]
Thus, $\underline{\mathbf{Ext}}_f^i(\mathscr{E}, \mathscr{F})$ is the derived moduli stack parametrizing $i$th extensions of $\mathscr{E}$ by $\mathscr{F}$ over $\mathscr{M}$.
\end{definition}

The following properties are immediate from those of linear stacks (\cite[Proposition 4.10]{J22a}, \cite[Proposition 2.11]{HKR}; see also \cite[\S 3.3]{ToenDAG}):
\begin{lemma} For any given $i \in \mathbb{Z}$:
\begin{enumerate}
    \item The formation of $\underline{\mathbf{Ext}}^i_f(\mathscr{E}, \mathscr{F}) \to \mathscr{M}$ commutes with arbitrary base change in $\mathscr{M}$.
    \item If $\shQ_{f}(\sE,\sF)$ is $i$-connective, $\underline{\mathbf{Ext}}^i_f(\mathscr{E}, \mathscr{F}) \to \mathscr{M}$ is a relative derived affine scheme. If, in addition, $\mathscr{E}$ is almost perfect, $\underline{\mathbf{Ext}}^i_f(\mathscr{E}, \mathscr{F}) \to \mathscr{M}$ is locally almost of finite presentation.
    \item If $\shQ_{f}(\sE,\sF)$ is $n$-connective for some $n$, then $\underline{\mathbf{Ext}}^i_f(\mathscr{E}, \mathscr{F}) \to \mathscr{M}$ admits a relative cotangent complex $\mathbb{L}_{\underline{\mathbf{Ext}}^i_f(\mathscr{E}, \mathscr{F})/ \mathscr{M}}$, which is $(n-i)$-connective and given by the pullback of $\mathcal{Q}_{f}(\mathscr{E},\mathscr{F}[i]) \simeq \shQ_{f}(\sE,\sF)[-i]$ along the projection $\underline{\mathbf{Ext}}^i_f(\mathscr{E}, \mathscr{F}) \to \mathscr{M}$.
    \item If $\shQ_{f}(\sE,\sF)$ is perfect of Tor-amplitude in $[a, b]$, then $\mathbb{L}_{\underline{\mathbf{Ext}}^i_f(\mathscr{E}, \mathscr{F})/ \mathscr{M}}$ is perfect of Tor-amplitude in $[a-i, b-i]$. In particular, $\underline{\mathbf{Ext}}^i_f(\mathscr{E}, \mathscr{F}) \to \mathscr{M}$ is smooth if $b \le i$, and quasi-smooth if $b \le i+1$.
\end{enumerate}
\end{lemma}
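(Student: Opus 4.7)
The plan is to reduce all four statements to the structural properties of the linear-stack construction $\mathcal{G} \mapsto \mathbf{V}(\mathcal{G})$ recorded in the cited references \cite[Proposition~4.10]{J22a}, \cite[Proposition~2.11]{HKR}, and \cite[\S 3.3]{ToenDAG}, applied to $\mathcal{G} := \mathcal{Q}_f(\mathscr{E}, \mathscr{F}[i])$. The first move I would make is to record the shift identity
\[
\mathcal{Q}_f(\mathscr{E}, \mathscr{F}[i]) \simeq \mathcal{Q}_f(\mathscr{E}, \mathscr{F})[-i],
\]
which is immediate from Remark~\ref{rmk:Q:basic}, so that the hypotheses stated on $\mathcal{Q}_f(\mathscr{E}, \mathscr{F})$ transfer to $\mathcal{G}$ after a shift by $-i$ in grading.

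For (1), the functor $\mathbf{V}(-)$ is well known to commute with base change in its argument, so combining this with the base-change compatibility of the $\mathcal{Q}$-complex (Proposition~\ref{thm:Q}\eqref{thm:Q-2}, extended to the general setting in Construction~\ref{constr:Q}) gives the claim at once. For (2), when $\mathcal{Q}_f(\mathscr{E}, \mathscr{F})$ is $i$-connective the shift identity makes $\mathcal{G}$ connective, and the cited references identify $\mathbf{V}(\mathcal{G})$ over $\mathscr{M}$ with the relative derived spectrum of the connective animated algebra $\Sym_{\mathscr{M}}(\mathcal{G})$; this realises $\underline{\mathbf{Ext}}^i_f(\mathscr{E}, \mathscr{F})$ as a relative derived affine scheme over $\mathscr{M}$. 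If in addition $\mathscr{E}$ is almost perfect, Proposition~\ref{thm:Q}\eqref{thm:Q-3} shows that $\mathcal{Q}_f(\mathscr{E}, \mathscr{F})$, and therefore $\mathcal{G}$, is almost perfect, so $\Sym_{\mathscr{M}}(\mathcal{G})$ is locally almost of finite presentation over $\mathscr{O}_{\mathscr{M}}$.

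For (3) and (4), I would invoke the standard identification of the relative cotangent complex of a linear stack,
\[
\mathbb{L}_{\mathbf{V}(\mathcal{G})/\mathscr{M}} \simeq p^{*}\mathcal{G},
\]
where $p \colon \mathbf{V}(\mathcal{G}) \to \mathscr{M}$ is the structure map; this is again part of the cited package. Applying it with $\mathcal{G} = \mathcal{Q}_f(\mathscr{E}, \mathscr{F})[-i]$, $n$-connectivity of $\mathcal{Q}_f(\mathscr{E}, \mathscr{F})$ immediately yields $(n-i)$-connectivity of $\mathbb{L}_{\underline{\mathbf{Ext}}^i_f(\mathscr{E}, \mathscr{F})/\mathscr{M}}$, while perfectness with Tor-amplitude in $[a,b]$ yields perfectness with Tor-amplitude in $[a-i,b-i]$. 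The smoothness criterion (Tor-amplitude $\le 0$) then corresponds to $b \le i$, and the quasi-smoothness criterion (Tor-amplitude $\le 1$) to $b \le i+1$, in the homological grading fixed in the Notations.

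I do not anticipate a genuine obstacle, since each assertion is a mechanical consequence of a cited property of $\mathbf{V}(-)$ via the defining equivalence $\underline{\mathbf{Ext}}^i_f(\mathscr{E}, \mathscr{F}) = \mathbf{V}(\mathcal{Q}_f(\mathscr{E}, \mathscr{F}[i]))$. The only point that requires some care is the shift by $-i$: one must track consistently that $i$-connectivity of $\mathcal{Q}_f(\mathscr{E}, \mathscr{F})$ translates to plain connectivity of $\mathcal{G}$, and likewise for the Tor-amplitude bounds governing smoothness and quasi-smoothness.
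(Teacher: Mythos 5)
Your proposal is correct and follows essentially the same route as the paper, which simply asserts the lemma as "immediate from properties of linear stacks" with the same three citations. You fill in exactly the implied steps: the shift identity $\mathcal{Q}_f(\mathscr{E}, \mathscr{F}[i]) \simeq \mathcal{Q}_f(\mathscr{E}, \mathscr{F})[-i]$, base change and almost perfectness of the $\mathcal{Q}$-complex from Proposition~\ref{thm:Q} and Construction~\ref{constr:Q}, the relative-affine and finite-presentation criteria for $\mathbf{V}(-)$, and the standard identification $\mathbb{L}_{\mathbf{V}(\mathcal{G})/\mathscr{M}} \simeq p^*\mathcal{G}$, with the Tor-amplitude bookkeeping done correctly in homological grading.
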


\begin{variant}[Rigidified moduli of extensions for simple complexes]
Assume the setting of \S\ref{subsec:simple}, and let $\mathscr{E}$ be an $\mathscr{M}$-simple complex in $\Dqc(\mathscr{X})$. Consider the fiber of the canonical map
\[
\mathcal{Q}_{f}(\mathscr{E}, \mathscr{E})^{\mathrm{red}} := \mathrm{fib}\big(\mathcal{Q}_{f}(\mathscr{E}, \mathscr{E}) \to \mathscr{O}_\mathscr{M}\big).
\]
Then $\mathcal{Q}_{f}(\mathscr{E}, \mathscr{E})^{\mathrm{red}}[-1]$ is a connective, almost perfect complex. We consider the affine cone 
\[
\underline{\mathbf{Ext}}_f^1(\mathscr{E}, \mathscr{E})^{\mathrm{rig}} := \mathbf{V}\big(\mathcal{Q}_{f}(\mathscr{E}, \mathscr{E})^{\mathrm{red}}[-1]\big) \to \mathscr{M},
\]
which is a relative derived affine scheme over $\mathscr{M}$, whose formation commutes with arbitrary base change in $\mathscr{M}$.
Moreover, there is a Cartesian square
\begin{equation*}
\begin{tikzcd}
    \underline{\mathbf{Ext}}_f^1(\mathscr{E}, \mathscr{E}) \arrow[r] \arrow[d] & \underline{\mathbf{Ext}}_f^1(\mathscr{E}, \mathscr{E})^{\mathrm{rig}} \arrow[d] \\
    \mathscr{M} \arrow[r] & B_\mathscr{M}^2 \mathbb{G}_a := \mathbf{V}(\mathscr{O}_\mathscr{M}[-2]).
\end{tikzcd}
\end{equation*}
Here, $\mathscr{M} \to B_\mathscr{M}^2 \mathbb{G}_a$ is a smooth, effective epimorphism with fibers $B\mathbb{G}_a$. Consequently, the canonical map
\[
\underline{\mathbf{Ext}}_f^1(\mathscr{E}, \mathscr{E}) \to \underline{\mathbf{Ext}}_f^1(\mathscr{E}, \mathscr{E})^{\mathrm{rig}}
\]
a smooth, effective epimorphism with fibers $B\mathbb{G}_a$ (i.e., it is a $\mathbb{G}_a$-gerbe).
\end{variant}

\subsection{Moduli of classes of non-split extensions $\mathbf{PExt}_f^i$}
\label{subsec:PExt}
The \emph{derived projectivization} $\mathbf{P}(\mathcal{G})$ of a connective complex $\mathcal{G} \in \Dqc(\mathscr{M})$ over a derived stack $\mathscr{M}$ (see \cite[\S 4.2]{J22a}) is the derived moduli stack assigning to each morphism $\eta \colon T \to \mathscr{M}$, where $T$ is a derived affine scheme, the space of pairs $(u, \mathscr{L})$, where $\mathscr{L} \in \mathrm{Pic}(T)$, and $u : \eta^* \mathcal{G} \to \mathscr{L}$ is a morphism in $\Dqc(T)$ that is surjective on $\pi_0$ (i.e., a surjection on zeroth sheaf homology when $T$ is classical).
\begin{definition}
\label{defn:PExt^i}
Let $\mathscr{E}, \mathscr{F} \in \Dqc(\mathscr{X})$, with $\mathscr{E}$ almost perfect and $\mathscr{F}$ relatively perfect over $\mathscr{M}$.

Fix $i \in \mathbb{Z}$, and suppose $\mathcal{Q}_{f}(\mathscr{E},\mathscr{F})$ is $i$-connective. By Corollary~\ref{cor:Ext:fiberwise}, this is equivalent to:
\begin{itemize}
    \item For every point $s \in \mathscr{M}$, $\Ext_{X_s}^j(\mathscr{E}|_{X_s}, \mathscr{F}|_{X_s}) = 0$ for all $j < i$.
\end{itemize}

We define the moduli stack
\[
\pi \colon \mathbf{PExt}_f^i(\mathscr{E},\mathscr{F}) := \mathbf{P}(\mathcal{Q}_{f}(\mathscr{E},\mathscr{F}[i])) \to \mathscr{M},
\]
where $\mathbf{P}(\mathcal{G})$ denotes the derived projectivization of $\mathcal{G}$ over $\mathscr{M}$. Let $\mathscr{O}(1)$ be the universal line bundle on $\mathbf{PExt}_f^i(\mathscr{E},\mathscr{F})$. By construction, we have
\[
\mathbf{PExt}_f^i(\mathscr{E}, \mathscr{F}) = \mathbf{PExt}_f^{i+m}(\mathscr{E}, \mathscr{F}[-m]) \quad \text{for all $m \ge 0$.}
\]
\end{definition}

The functor of points of $ \mathbf{PExt}_f^i(\mathscr{E},\mathscr{F})$ are explicitly given as follows: for any morphism $\eta \colon T \to \mathscr{M}$ from a derived affine scheme (or  a perfect stack) $T$, $\mathbf{PExt}_f^i(\mathscr{E},\mathscr{F})(T)$ is the space of pairs $(u, \mathscr{L})$, where $\mathscr{L} \in \mathrm{Pic}(T)$ and $u : \eta^*\mathcal{Q}_{f}(\mathscr{E},\mathscr{F}[i]) \to \mathscr{L}$ is a morphism in $\Dqc(T)$ that is surjective on $\pi_0$. By Proposition~\ref{thm:Q}, such a morphism $u$ corresponds to a morphism in $\Dqc(X_T)$,
\[
v \colon \mathscr{E}_T \to \mathscr{F}_T[i] \otimes f_T^*\mathscr{L}.
\]
The surjectivity of $u$ on $\pi_0$ is equivalent, by Nakayama's Lemma, to the condition that for any point $t \in T$, the induced map $v|_{X_t} : \mathscr{E}|_{X_t} \to \mathscr{F}|_{X_t}[i]$ is nonzero. Indeed, for each $t \in T$,
\[
\mathbb{R}\operatorname{Hom}_{\kappa(t)}\left( i_t^*\mathcal{Q}_{f}(\mathscr{E},\mathscr{F}[i]), \kappa(t)\right) \simeq\RR \operatorname{Hom}_{X_t}\left(\mathscr{E}|_{X_t}, \mathscr{F}|_{X_t}[i]\right),
\]
where we use the Notation of Corollary~\ref{cor:Ext:fiberwise}. Therefore, the space of pairs $(u,\mathscr{L})$ as above is equivalent to the space of pairs $(v, \mathscr{L})$, where $v$ is nonzero when evaluated at every point.

We summarize this description as follows: 

\begin{definition}
\label{defn:fiberwise.nonzero}
A homomorphism $\phi \colon \mathscr{E} \to \mathscr{F}$ in $\Dqc(\mathscr{X})$ is said to be \emph{fiberwise nonzero over $\mathscr{M}$} if, for every point $s \in \mathscr{M}$, the induced map on the fiber $X_s = \mathscr{X} \times_\mathscr{M} \{s\}$,
\[
\phi|_{X_s} \colon \mathscr{E}|_{X_s} \to \mathscr{F}|_{X_s},
\]
is nonzero, where $\mathscr{E}|_{X_s}$ and $\mathscr{F}|_{X_s}$ denote the pullbacks to the fiber $X_s$ as in Corollary~\ref{cor:Ext:fiberwise}.
\end{definition}

\begin{lemma}
\label{lem:PExt:T-points}
In the context of Definition~\ref{defn:PExt^i}, let $\eta \colon T \to \mathscr{M}$ be a morphism from a perfect stack $T$. Then the space of $T$-points of $\mathbf{PExt}_f^i(\mathscr{E},\mathscr{F})$ (over $\eta$) consists of pairs $(v, \mathscr{L})$, where $\mathscr{L} \in \mathrm{Pic}(T)$ is a line bundle, and
\[
v \colon \mathscr{E}_T \to \mathscr{F}_T[i] \otimes f_T^*\mathscr{L}
\]
is homomorphism in $\Dqc(X_T)$ that is fiberwise nonzero over $T$. 
\end{lemma}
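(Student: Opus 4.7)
The plan is to unravel the definition of the derived projectivization $\mathbf{P}(-)$, use the universal property and base-change compatibility of the $\mathcal{Q}$-complex to reinterpret cosections $u$ as homomorphisms $v$ of quasi-coherent complexes on $X_T$, and finally show that the $\pi_0$-surjectivity condition built into the definition of $\mathbf{P}$ translates precisely into the fiberwise nonvanishing condition of Definition~\ref{defn:fiberwise.nonzero}.

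First, by Definition~\ref{defn:PExt^i} we have $\mathbf{PExt}_f^i(\mathscr{E},\mathscr{F}) = \mathbf{P}(\mathcal{Q}_f(\mathscr{E},\mathscr{F}[i]))$, whose argument is connective since $\mathcal{Q}_f(\mathscr{E},\mathscr{F}[i]) \simeq \mathcal{Q}_f(\mathscr{E},\mathscr{F})[-i]$ by Remark~\ref{rmk:Q:basic} and the hypothesis that $\mathcal{Q}_f(\mathscr{E},\mathscr{F})$ is $i$-connective. Hence a $T$-point of $\mathbf{PExt}_f^i(\mathscr{E},\mathscr{F})$ over $\eta \colon T \to \mathscr{M}$ is a pair $(u,\mathscr{L})$ with $\mathscr{L} \in \mathrm{Pic}(T)$ and $u \colon \eta^*\mathcal{Q}_f(\mathscr{E},\mathscr{F}[i]) \to \mathscr{L}$ a morphism in $\Dqc(T)$ surjective on $\pi_0$; this description extends from derived affine $T$ to the perfect stack $T$ by smooth descent along a derived affine atlas of $T$. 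Since $T$ is perfect, Proposition~\ref{thm:Q}\eqref{thm:Q-2} identifies $\eta^*\mathcal{Q}_f(\mathscr{E},\mathscr{F}[i]) \simeq \mathcal{Q}_{f_T}(\mathscr{E}_T,\mathscr{F}_T[i])$, and the universal property Proposition~\ref{thm:Q}\eqref{thm:Q-1}, applied to $f_T$ and $\mathscr{G} = \mathscr{L}$, gives a canonical equivalence between the space of such $u$'s (forgetting the $\pi_0$ condition) and the space of homomorphisms $v \colon \mathscr{E}_T \to \mathscr{F}_T[i] \otimes f_T^*\mathscr{L}$ in $\Dqc(X_T)$.

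It then remains to match the $\pi_0$-surjectivity condition on $u$ with the fiberwise-nonzero condition on $v$. Since $\mathscr{L}$ is a line bundle (so $\pi_0\mathscr{L}$ is invertible) and $\eta^*\mathcal{Q}_f(\mathscr{E},\mathscr{F}[i])$ is connective, $\pi_0 u$ is a morphism of discrete sheaves on $T$ with invertible target; by classical Nakayama, applied smooth-locally over a derived affine atlas, surjectivity of $\pi_0 u$ is equivalent to the nonvanishing of $\pi_0(t^*u)$ at every point $t \colon \operatorname{Spec}\kappa(t) \to T$. Applying Proposition~\ref{thm:Q}\eqref{thm:Q-1} and~\eqref{thm:Q-2} to the further base change to $\kappa(t)$ yields the duality
\[
\operatorname{Hom}_{\kappa(t)}\bigl(\pi_0 t^*\mathcal{Q}_{f_T}(\mathscr{E}_T,\mathscr{F}_T[i]),\, \kappa(t)\bigr) \simeq \operatorname{Ext}^i_{X_t}(\mathscr{E}|_{X_t},\mathscr{F}|_{X_t}),
\]
under which the class of $t^*u$, after trivializing $\mathscr{L}|_t \simeq \kappa(t)$, corresponds to the class of $v|_{X_t}$. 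Consequently $\pi_0 u$ is surjective at every $t$ precisely when $v|_{X_t}$ is nonzero for every $t \in T$, which is exactly the fiberwise-nonzero condition of Definition~\ref{defn:fiberwise.nonzero}.

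The main obstacle in this argument is the final translation: one must carefully package the residue-field duality provided by Proposition~\ref{thm:Q} together with Nakayama on a smooth derived affine atlas of $T$, in order to conclude that pointwise nonvanishing of the cosection $u$ coincides with fiberwise nonvanishing of the sheaf-level morphism $v$. Everything else is a direct unwinding of definitions and an application of the base-change and universal-property statements already established.
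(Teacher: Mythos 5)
Your proposal is correct and follows essentially the same approach as the paper: unwind the functor of points of the derived projectivization, apply the base-change and universal-property statements of Proposition~\ref{thm:Q} to translate cosections $u$ into morphisms $v$, and use Nakayama together with the residue-field duality to identify $\pi_0$-surjectivity with the fiberwise-nonzero condition. Your version is slightly more explicit about the descent from an affine atlas and the trivialization of $\mathscr{L}$ at a point, but the argument is the same.
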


For concreteness, let us consider two examples. For simplicity of exposition, assume that $\mathscr{M}$ is a perfect stack and $\shQ_{f}(\sE,\sF)$ is an $i$-connective perfect complex. Let $P = \mathbf{PExt}_f^i(\mathscr{E},\mathscr{F})$, and let $f_P \colon X_P \to P$ denote the base change of $\mathscr{X} \to \mathscr{M}$ along $P \to \mathscr{M}$.

\begin{example}[$\mathbf{PHom}_f(\mathscr{E},\mathscr{F})$]
\label{eg:PHom}
The stack $P =\mathbf{PHom}_f(\mathscr{E},\mathscr{F}) := \mathbf{PExt}_f^0(\mathscr{E},\mathscr{F})$ is the derived moduli stack parametrizing fiberwise nonzero homomorphisms $\mathscr{E} \to \mathscr{F}$, up to twisting by line bundles from $\mathscr{M}$. There is a universal fiberwise nonzero homomorphism in $\Dqc(X_P)$:
\[
v_{\mathrm{univ}} \colon \mathscr{E}_{P} \to \mathscr{F}_{P} \otimes f_P^*\mathscr{O}(1).
\]
\end{example}

\begin{example}[$\mathbf{PExt}_f^1(\mathscr{E},\mathscr{F})$]
The stack $P = \mathbf{PExt}_f^1(\mathscr{E},\mathscr{F})$ is the \emph{derived moduli stack of classes of fiberwise nonsplit extensions of $\mathscr{E}$ by $\mathscr{F}$}. For any morphism $T \to \mathscr{M}$ of perfect stacks, its $T$-points are given by pairs $(\zeta_T, \mathscr{L})$, where $\mathscr{L} \in \mathrm{Pic}(T)$ and $\zeta_T$ is an exact triangle in $\Dqc(X_T)$,
\[
\mathscr{F}_T \otimes f_T^*\mathscr{L} \to \mathscr{G}_T \to \mathscr{E}_T,
\]
which is \emph{fiberwise nonsplit over $T$}: that is, for every point $t \in T$, the induced connecting morphism
\(
v|_{X_t} \colon \mathscr{E}|_{X_t} \to \mathscr{F}|_{X_t}[1]
\)
defines a nonzero class in the extension group:
\[
v|_{X_t} \neq 0 \in \mathrm{Ext}^1_{X_t}(\mathscr{E}|_{X_t}, \mathscr{F}|_{X_t}).
\]
There is a \emph{universal exact triangle} in $\Dqc(X_P)$,
\[
\zeta_{\mathrm{univ}} \colon \mathscr{F}_{P} \otimes f_P^*\mathscr{O}_P(1) \to \mathscr{G}_{P} \to \mathscr{E}_{P},
\]
which is fiberwise nonsplit over $P$. In particular, $\mathbf{PExt}_f^1(\mathscr{E},\mathscr{F})$ provides a derived generalization of the classical scheme $P$ considered in \cite[\S 4]{Lag83}.
\end{example}

The following is immediate from the theory of derived projectivizations (\cite[\S 4.2]{J22a}):

\begin{lemma}
\label{lem:PExt}
The derived stack $\pi \colon \mathbf{PExt}^i_f(\mathscr{E}, \mathscr{F}) \to \mathscr{M}$ defined in Definition~\ref{defn:PExt^i} satisfies the following properties:
\begin{enumerate}
    \item \textbf{(Base change)} Formation of $\mathbf{PExt}^i_f(\mathscr{E}, \mathscr{F})$ commutes with arbitrary base change in $\mathscr{M}$.
    
    \item \textbf{(Representability)} The projection $\pi$ is proper,  locally almost of finite presentation, and a relative derived affine scheme over $\mathscr{M}$.
    
    \item \textbf{(Points over fields)} For any morphism $\Spec \kappa \to \mathscr{M}$, where $\kappa$ is a field, let $X_\kappa \to \operatorname{Spec}(\kappa)$ denote the base change of $\mathscr{X} \to \mathscr{M}$. The space of $\kappa$-points is homotopy equivalent to
    \[
    \mathbf{PExt}^i_f(\mathscr{E}, \mathscr{F})(\kappa) \simeq \left( \Ext^i_{X_\kappa}\big(\mathscr{E}|_{X_\kappa}, \mathscr{F}|_{X_\kappa}\big) \setminus \{0\} \right) / \kappa^\times,
    \]
    that is, the set of nonzero $i$-th extension classes of $\mathscr{E}|_{X_\kappa}$ by $\mathscr{F}|_{X_\kappa}$, modulo scaling.
    
    \item \textbf{(Euler exact triangle)} The projection $\pi$ admits a connective relative cotangent complex $\mathbb{L}_{\mathbf{PExt}^i_f(\mathscr{E}, \mathscr{F})/\mathscr{M}}$, which fits into the Euler exact triangle in $\Dqc(\mathbf{PExt}^i_f(\mathscr{E}, \mathscr{F}))$:
    \[
    \mathbb{L}_{\mathbf{PExt}^i_f(\mathscr{E}, \mathscr{F})/\mathscr{M}} \otimes \pi^* \mathscr{O}(1) \to \pi^* \mathcal{Q}_{f}(\mathscr{E}, \mathscr{F}[i]) \to \mathscr{O}(1).
    \]
    
    \item \textbf{(Finiteness and smoothness)} If $\shQ_{f}(\sE,\sF)$ is perfect of Tor-amplitude in $[i, i+k]$ for some $k \geq 0$, then $\mathbb{L}_{\mathbf{PExt}^i_f(\mathscr{E}, \mathscr{F})/\mathscr{M}}$ is perfect of Tor-amplitude in $[0, k]$. In particular, $\pi$ is smooth if $k = 0$, and quasi-smooth if $k \leq 1$.
\end{enumerate}
\end{lemma}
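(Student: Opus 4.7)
All five parts follow by specializing the general theory of derived projectivizations from \cite[\S 4.2]{J22a} to the complex $\mathcal{G} := \mathcal{Q}_f(\mathscr{E},\mathscr{F}[i])$ on $\mathscr{M}$, so that $\mathbf{PExt}^i_f(\mathscr{E},\mathscr{F}) = \mathbf{P}(\mathcal{G})$. The two hypotheses needed for that theory are supplied by Section~\ref{sec:Q-complex}: Proposition~\ref{thm:Q}\eqref{thm:Q-3} gives that $\mathcal{G}$ is almost perfect (since $\mathscr{E}$ is), and the shift identity of Remark~\ref{rmk:Q:basic}(1) gives $\mathcal{G} \simeq \mathcal{Q}_f(\mathscr{E},\mathscr{F})[-i]$, so the standing $i$-connectivity hypothesis on $\mathcal{Q}_f(\mathscr{E},\mathscr{F})$ translates to $0$-connectivity of $\mathcal{G}$, which is exactly the input required by the derived projectivization formalism.

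With this translation, assertion~(1) combines Proposition~\ref{thm:Q}\eqref{thm:Q-2} with the base-change compatibility of $\mathbf{P}(-)$ from \cite[\S 4.2]{J22a}. Assertion~(2) is a direct application of the representability, properness, and finite-presentation results for $\mathbf{P}(\mathcal{G})$ with $\mathcal{G}$ almost perfect and connective. For~(3), I would first apply~(1) to reduce to the case $\mathscr{M} = \Spec\kappa$; the universal property of $\mathbf{P}(-)$ then identifies $\kappa$-points of $\mathbf{P}(\mathcal{Q}_{f_\kappa}(\mathscr{E}|_{X_\kappa},\mathscr{F}|_{X_\kappa}[i]))$ with nonzero cosections to $\kappa$ modulo $\kappa^\times$, and the universal property Proposition~\ref{thm:Q}\eqref{thm:Q-1} further identifies the space of such cosections with $\mathrm{Ext}^{i}_{X_\kappa}(\mathscr{E}|_{X_\kappa},\mathscr{F}|_{X_\kappa})$.

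For~(4), the Euler triangle is a standard feature of $\mathbf{P}(\mathcal{G})$ in this convention and can simply be pulled from \cite[\S 4.2]{J22a} with $\mathcal{G} = \mathcal{Q}_f(\mathscr{E},\mathscr{F}[i])$. The Tor-amplitude statement in~(5) is then a short computation on that triangle: if $\mathcal{Q}_f(\mathscr{E},\mathscr{F})$ has Tor-amplitude in $[i,i+k]$ then $\mathcal{G}$ has Tor-amplitude in $[0,k]$, hence so does $\pi^*\mathcal{G}$; the tautological map $\pi^*\mathcal{G} \to \mathscr{O}(1)$ is surjective on $\pi_0$ by construction of $\mathbf{P}(-)$, so its fiber, which is $\mathbb{L}_\pi \otimes \mathscr{O}(1)$ up to a shift coming from the Euler triangle, is connective with Tor-amplitude in $[0,k]$. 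Twisting by the line bundle $\mathscr{O}(-1)$ preserves this bound, giving the claim for $\mathbb{L}_\pi$. The smoothness and quasi-smoothness assertions are then the cases $k=0$ and $k\le 1$, respectively.

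No step poses a genuine obstacle: the substantive work has already been done in Section~\ref{sec:Q-complex} (universal property, base change, connectivity, and almost perfectness of $\mathcal{Q}_f$) and in the derived projectivization formalism of \cite[\S 4.2]{J22a}. The only care is to apply the shift identity $\mathcal{Q}_f(\mathscr{E},\mathscr{F}[i]) \simeq \mathcal{Q}_f(\mathscr{E},\mathscr{F})[-i]$ consistently so that connectivity and Tor-amplitude hypotheses are matched to the conventions of $\mathbf{P}(-)$.
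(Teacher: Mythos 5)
Your proposal is correct and takes the same route the paper does: the paper's proof is the single sentence that the lemma ``is immediate from the theory of derived projectivizations (\cite[\S 4.2]{J22a})'', and you have simply unpacked that, correctly identifying $\mathbf{PExt}^i_f(\mathscr{E},\mathscr{F}) = \mathbf{P}(\mathcal{Q}_f(\mathscr{E},\mathscr{F}[i]))$ and supplying the needed connectivity and almost-perfectness via Remark~\ref{rmk:Q:basic}(1) and Proposition~\ref{thm:Q}\eqref{thm:Q-3}. One tiny imprecision in~(5): the Euler triangle already exhibits $\mathbb{L}_\pi \otimes \pi^*\mathscr{O}(1)$ as the fiber of $\pi^*\mathcal{Q}_f(\mathscr{E},\mathscr{F}[i]) \to \mathscr{O}(1)$ with no shift, and the surjectivity on $\pi_0$ is what ensures the fiber's Tor-amplitude is bounded below by $0$ rather than $-1$; your conclusion is unaffected.
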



\subsection{Brill--Noether situations}
\label{sec:BN}
The construction of this section is particularly usueful in Brill--Noether situations (see \cite{BCJ}). Specifically, let $X/S$ be a flat, proper morphism of schemes, and let $M_i \to S$ be moduli stacks associated to $X/S$. Set $X_i := X \times_S M_i$ for $i=1,2$, and let $\mathscr{U}_i \in \Dqc(X_i)$ denote the universal object, assumed to be almost perfect, and relatively perfect over $M_i$ (which holds automatically if $\mathscr{U}_i$ is flat over $M_i$). For any $T \to S$, we write $T$-points of $M_i$ as $[F_{T\,i}] \in M_i(T)$, where $F_{T\,i} \in \Dqc(X_{T})$ is the pullback of $\mathscr{U}_i$ along $X \times_S M_i \to M_i$.

In this setting, consider
\[
f \colon \mathscr{X} := X \times_S M_1 \times_S M_2 \to \mathscr{M} := M_1 \times_S M_2.
\]
Let $\mathrm{pr}_{ij}$ denote the projection from $\mathscr{X}$ to the product of its $i$-th and $j$-th factors. Let
\[
\mathscr{E} = \mathrm{pr}_{12}^*(\mathscr{U}_1), \qquad \mathscr{F} = \mathrm{pr}_{13}^*(\mathscr{U}_2).
\]
By Corollary~\ref{cor:Ext:fiberwise}, the complex $\mathcal{Q}_f(\mathscr{E},\mathscr{F})$ is $i$-connective if and only if
\begin{itemize}
  \item $\Ext_{X_s}^j(F_1, F_2) = 0$ for all $j < i$, for every $s \in S$, $[F_1] \in M_1(s)$, and $[F_2] \in M_2(s)$.
\end{itemize}
Here, $X_s$ denotes the fiber of the flat family $X \to S$ over $s$, and, as noted above, $F_i \in \Dqc(X_s)$ corresponds to the $s$-point $[F_i] \in M_i(s)$ for $i = 1, 2$. 

 Thus,  the moduli stack $\mathbf{PExt}_f^i(\mathscr{E},\mathscr{F})$ parametrizes, for each $T \to S$, the data $(F_{1\,T}, F_{2\,T}, \mathscr{L}_T, v)$, where $[F_{i\,T}] \in M_i(T)$ are $T$-points of $M_i$ corresponding to $F_{T\,i} \in \Dqc(X_{T})$, $i=1,2$, $\mathscr{L}_T \in \mathrm{Pic}(T)$ is a line bundle, and $v \colon F_{1\,T} \to F_{2\,T} \otimes f_T^* \mathscr{L}_T$ is a morphism in $\Dqc(X_T)$ that is fiberwise nonzero.

\begin{example}[Hecke modifications]
\label{eg:Hecke}
In the above context, we note that $M = X$ serves as the moduli space parametrizing points of $X/S$, with universal object $\mathscr{U} = \mathscr{O}_{\Delta_{X/S}} \in \Dqc(X \times_S X)$.
\begin{enumerate}
    \item Suppose $M_2 = X \to S$, with $\mathscr{U}_2 = \mathscr{O}_{\Delta_{X/S}}$. Then $\mathcal{Q}_f(\mathscr{E}, \mathscr{F})$ is connective if $\mathscr{U}_1$ is connective. In this case, the moduli stack $\mathbf{PHom}_f(\mathscr{E},\mathscr{F}) \to M_1 \times_S X$ parametrizes, for each $T \to S$, data $(F_{1\,T}, x, \mathscr{L}_T, v)$, where $[F_{1\,T}] \in M_1(T)$ is a $T$-point of $M_1$, $x \colon T \to X_T$ is a section of $f_T \colon X_T \to T$, $\mathscr{L}_T \in \mathrm{Pic}(T)$ a line bundle, and
    \[
        v \colon F_{1\,T} \to x_*(\mathscr{L}_T)
    \]
    is a morphism in $\Dqc(X_T)$ that is fiberwise nonzero over $T$.

    \item Suppose $M_1 = X \to S$, with $\mathscr{U}_1 = \mathscr{O}_{\Delta_{X/S}}$. Then $\mathcal{Q}_f(\mathscr{E}, \mathscr{F})$ is $1$-connective if
    \begin{itemize}
        \item $\Ext_{X_s}^j(\mathscr{O}_x, F_2) = 0$ for all $j \leq 1$, $s \in S$, $x \in X_s(s)$, and $[F_2] \in M_2(s)$.
    \end{itemize}
    In this case, the moduli stack $\mathbf{PExt}^1_f(\mathscr{E},\mathscr{F}) \to X \times_S M_2$ parametrizes, for each $T \to S$, data $(x, F_{2\,T}, \mathscr{L}_T, w)$, where $[F_{2\,T}] \in M_2(T)$ is a $T$-point of $M_2$, $x \colon T \to X_T$ is a section of $f_T$, $\mathscr{L}_T \in \mathrm{Pic}(T)$ is a line bundle, and
    \[
        w \colon x_*(\mathscr{L}_T^\vee) \to F_{2\,T}[1]
    \]
    is a fiberwise nonzero morphism in $\Dqc(X_T)$. The morphism $w$ canonically corresponds to an extension sequence on $X_T$ that is fiberwise non-split over $T$:
    \[
        F_{2\,T} \to E_T \to x_*(\mathscr{L}_T^\vee).
    \]
\end{enumerate}
Comparing with \cite[Propositions 8.7 \& 8.8]{J22b}, these constructions generalize the Hecke correspondence moduli considered by Negu{\c{t}} in \cite{Neg19, Neg22} and by the author in \cite[\S 8]{J22b} to the setting of non-smooth families and non-perfect complexes.
\end{example}

\subsection{Moduli of systems of extensions $\mathbf{GExt}_{f}^i$}
\label{subsec:GExt}
In the context of Definition~\ref{defn:PExt^i}, let $\ell \geq 0$ be an integer, we can also consider the derived moduli stack
\[
\mathbf{GExt}_{f}^i(\mathscr{E},\mathscr{F};\ell) := \mathbf{Grass}_{\mathscr{M}}\left(\mathcal{Q}_{f}(\mathscr{E},\mathscr{F}[i]); \ell\right) \to \mathscr{M}.
\]
Here, for a connective complex $\mathcal{G}$ on $\mathscr{M}$, $\mathbf{Grass}_{\mathscr{M}}(\mathcal{G}; \ell) \to \mathscr{M}$ denotes the \emph{derived Grassmannian} constructed in~\cite{J22b}, which assigns to each morphism $\eta \colon T \to \mathscr{M}$ from a derived affine scheme $T$ the space of pairs $(u, \mathscr{V})$, with $\mathscr{V}$ a vector bundle on $T$ of rank~$\ell$, and $u : \eta^* \mathcal{G} \to \mathscr{V}$ a morphism in $\Dqc(T)$ that is surjective on~$\pi_0$.
Note that by construction:
\[
\mathbf{GExt}_{f}^i(\mathscr{E},\mathscr{F};0) = \mathscr{M}, \qquad
\mathbf{GExt}_{f}^i(\mathscr{E},\mathscr{F};1) = \mathbf{PExt}_{f}^i(\mathscr{E},\mathscr{F}).
\]By an argument analogous to that of Lemma~\ref{lem:PExt:T-points}, we obtain:

\begin{lemma}
\label{lem:GExt:T-points}
Let $\eta \colon T \to \mathscr{M}$ be a morphism from a perfect stack. The space of  $T$-points of $\mathbf{GExt}_f^i(\mathscr{E},\mathscr{F};\ell)$ over $\eta$ consists of pairs $(v, \mathscr{V})$, where $\mathscr{V}$ is a vector bundle of rank $\ell$ on $T$, and
\[
v \colon \mathscr{E}_T \to \mathscr{F}_T[i] \otimes f_T^*\mathscr{V}
\]
is a homomorphism in $\Dqc(X_T)$ such that, for every $t \in T$, the induced map
\(
v|_{X_t} \colon \mathscr{E}|_{X_t} \to \mathscr{F}|_{X_t}^{\oplus \ell}[i]
\)
on the fiber $X_t = \mathscr{X} \times_T \{t\}$ yields an injective map
\[
\kappa(t)^{\oplus \ell} \hookrightarrow \mathrm{Ext}_{X_t}^i\big(\mathscr{E}|_{X_t}, \mathscr{F}|_{X_t}\big).
\]
\end{lemma}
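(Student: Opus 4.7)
The plan is to follow the same pattern as Lemma \ref{lem:PExt:T-points}, replacing line bundles by rank-$\ell$ vector bundles and ``fiberwise nonzero'' by the linear independence condition described in the statement. First, I would unpack the definition: by construction, a $T$-point of $\mathbf{GExt}_f^i(\mathscr{E},\mathscr{F};\ell)=\mathbf{Grass}_{\mathscr{M}}(\mathcal{Q}_f(\mathscr{E},\mathscr{F}[i]);\ell)$ over $\eta\colon T\to \mathscr{M}$ is a pair $(u,\mathscr{V})$, where $\mathscr{V}$ is a rank-$\ell$ vector bundle on $T$ and $u\colon \eta^*\mathcal{Q}_f(\mathscr{E},\mathscr{F}[i])\to \mathscr{V}$ is a morphism in $\Dqc(T)$ that is surjective on $\pi_0$. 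Applying the base-change statement of Proposition~\ref{thm:Q}\eqref{thm:Q-2}, we may identify $\eta^*\mathcal{Q}_f(\mathscr{E},\mathscr{F}[i])\simeq \mathcal{Q}_{f_T}(\mathscr{E}_T,\mathscr{F}_T[i])$, and then the universal property Proposition~\ref{thm:Q}\eqref{thm:Q-1} gives a functorial equivalence between such morphisms $u$ and morphisms $v\colon \mathscr{E}_T\to \mathscr{F}_T[i]\otimes f_T^*\mathscr{V}$ in $\Dqc(X_T)$.

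Next, I would translate the ``surjective on $\pi_0$'' condition into the desired fiberwise condition. Since $\mathcal{Q}_f(\mathscr{E},\mathscr{F}[i])$ is connective (by the hypothesis in Definition~\ref{defn:PExt^i}) and $\mathscr{V}$ is discrete, surjectivity of $u$ on $\pi_0$ is equivalent to surjectivity of the induced map $\pi_0(u)\colon \pi_0(\eta^*\mathcal{Q}_f(\mathscr{E},\mathscr{F}[i]))\to \mathscr{V}$ as sheaves on $T$. By Nakayama's Lemma applied to the cokernel of $\pi_0(u)$, this is equivalent to asserting, at every point $t\in T$, the surjectivity of the $\kappa(t)$-linear map
\[
\pi_0\bigl(i_t^*\mathcal{Q}_{f_T}(\mathscr{E}_T,\mathscr{F}_T[i])\bigr)\twoheadrightarrow \kappa(t)^{\oplus \ell}.
\]

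Finally, I would dualize this condition. Using Proposition~\ref{thm:Q}\eqref{thm:Q-1}\&\eqref{thm:Q-2} with $\mathscr{G}=\kappa(t)$, together with Corollary~\ref{cor:Ext:fiberwise}, we obtain
\[
\mathbb{R}\mathrm{Hom}_{\kappa(t)}\bigl(i_t^*\mathcal{Q}_{f_T}(\mathscr{E}_T,\mathscr{F}_T[i]),\kappa(t)\bigr)\simeq \mathbb{R}\mathrm{Hom}_{X_t}\bigl(\mathscr{E}|_{X_t},\mathscr{F}|_{X_t}[i]\bigr),
\]
so that the $\kappa(t)$-linear dual of $\pi_0(i_t^*\mathcal{Q}_{f_T}(\mathscr{E}_T,\mathscr{F}_T[i]))$ is precisely $\mathrm{Ext}^i_{X_t}(\mathscr{E}|_{X_t},\mathscr{F}|_{X_t})$. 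Since duality over the field $\kappa(t)$ is exact and contravariant, the surjectivity above is equivalent to the injectivity of the dual map $\kappa(t)^{\oplus \ell}\hookrightarrow \mathrm{Ext}^i_{X_t}(\mathscr{E}|_{X_t},\mathscr{F}|_{X_t})$, which by construction is the map classified by $v|_{X_t}$. Combining these identifications yields the claimed description of $T$-points.

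There is no real obstacle here: the proof is essentially bookkeeping, and the only delicate point is ensuring that the equivalence $u\leftrightarrow v$ from Proposition~\ref{thm:Q} identifies the fiberwise linear map on Ext with the one obtained by linearly dualizing $\pi_0(i_t^*u)$. This compatibility is immediate from the functoriality of the universal property in both $\mathscr{G}$ (with $\mathscr{G}=\mathscr{V}$ and then $\mathscr{G}=\kappa(t)$) and the base $T$ (restricting along $i_t$).
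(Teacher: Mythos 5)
Your proof is correct and follows exactly the route the paper intends: the paper derives Lemma \ref{lem:PExt:T-points} from the definition of $\mathbf{P}(\mathcal{Q}_f(\mathscr{E},\mathscr{F}[i]))$, the universal property and base-change compatibility of the $\mathcal{Q}$-complex (Proposition \ref{thm:Q}), and a Nakayama-plus-fiberwise-duality argument, and then states that Lemma \ref{lem:GExt:T-points} follows ``by an argument analogous to that of Lemma \ref{lem:PExt:T-points}.'' Your proposal is precisely that analogous argument spelled out, with rank-$\ell$ vector bundles in place of line bundles and surjectivity of the dual map $\kappa(t)^{\oplus\ell}\twoheadleftarrow \pi_0(i_t^*\mathcal{Q}_{f_T})$ dualizing to the stated injectivity into $\mathrm{Ext}^i_{X_t}(\mathscr{E}|_{X_t},\mathscr{F}|_{X_t})$.
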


The following is immediate from the theory of derived Grassmannians (\cite[\S 4.1]{J22b}):

\begin{lemma}
The derived stack $\pi \colon \mathbf{GExt}^i_f(\mathscr{E}, \mathscr{F};\ell) \to \mathscr{M}$ satisfies the following properties:
\begin{enumerate}
    \item \textbf{(Base change)} Formation of $\mathbf{GExt}^i_f(\mathscr{E}, \mathscr{F};\ell)$ commutes with any base change in $\mathscr{M}$.
    
    \item \textbf{(Representability)} The projection $\pi$ is proper, locally almost of finite presentation, and a relative derived affine scheme over $\mathscr{M}$.
    
    \item \textbf{(Points over fields)} For any morphism $\Spec \kappa \to \mathscr{M}$, where $\kappa$ is a field, let $X_\kappa \to \operatorname{Spec}(\kappa)$ denote the base change of $X \to \mathscr{M}$. The space of $\kappa$-points is homotopy equivalent to
    \[
    \mathbf{GExt}^i_f(\mathscr{E}, \mathscr{F};\ell)(\kappa) \simeq \operatorname{Hom}^{\mathrm{rk}=\ell}\left(\kappa^{\oplus \ell}, \Ext^i_{X_\kappa}\big(\mathscr{E}|_{X_\kappa}, \mathscr{F}|_{X_\kappa}\big) \right) / \mathrm{GL}_\ell(\kappa^\times),
    \]
    that is, the set of $\ell$-dimension $\kappa$-linear subspaces of $ \Ext^i_{X_\kappa}\big(\mathscr{E}|_{X_\kappa}, \mathscr{F}|_{X_\kappa}\big)$.   
    \item \textbf{(Universal exact triangle)} The projection $\pi$ admits a connective relative cotangent complex $\mathbb{L}_{\mathbf{GExt}^i_f(\mathscr{E}, \mathscr{F}; \ell)/\mathscr{M}}$, which fits an exact triangle in $\Dqc(\mathbf{GExt}^i_f(\mathscr{E}, \mathscr{F};\ell))$:
    \[
    \mathbb{L}_{\mathbf{GExt}^i_f(\mathscr{E}, \mathscr{F};\ell)/\mathscr{M}} \otimes \pi^* \mathscr{Q} \to \pi^* \mathcal{Q}_{f}(\mathscr{E}, \mathscr{F}[i]) \to \mathscr{Q},
    \]
    where $\mathscr{Q}$ denote the universal rank $\ell$ quotient bundle.
    \item \textbf{(Finiteness and smoothness)} If $\shQ_{f}(\sE,\sF)$ is perfect of Tor-amplitude in $[i, i+k]$ for some $k \geq 0$, then $\mathbb{L}_{\mathbf{GExt}^i_f(\mathscr{E}, \mathscr{F};\ell)/\mathscr{M}}$ is perfect of Tor-amplitude in $[0, k]$. In particular, $\pi$ is smooth if $k = 0$, and quasi-smooth if $k \leq 1$.
\end{enumerate}
\end{lemma}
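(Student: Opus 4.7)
The plan is to obtain all five assertions by direct application of the theory of derived Grassmannians $\mathbf{Grass}_{\mathscr{M}}(\mathcal{G};\ell)$ developed in \cite[\S 4.1]{J22b}, with input $\mathcal{G} := \mathcal{Q}_{f}(\mathscr{E},\mathscr{F}[i])$. Two preliminary checks make this possible. First, Proposition~\ref{thm:Q}\eqref{thm:Q-3}, combined with the almost perfectness of $\mathscr{E}$, shows that $\mathcal{Q}_{f}(\mathscr{E},\mathscr{F}[i])$ is almost perfect on $\mathscr{M}$. Second, the $i$-connectivity hypothesis on $\mathcal{Q}_{f}(\mathscr{E},\mathscr{F})$ inherited from Definition~\ref{defn:PExt^i} is, via Remark~\ref{rmk:Q:basic}, equivalent to connectivity of $\mathcal{Q}_{f}(\mathscr{E},\mathscr{F}[i])$. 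These are exactly the input hypotheses under which the derived Grassmannian construction of \cite[\S 4.1]{J22b} applies.

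Assertions (1) and (2) would then follow by combining two independent base-change compatibilities: Proposition~\ref{thm:Q}\eqref{thm:Q-2} shows that formation of $\mathcal{Q}_{f}(\mathscr{E},\mathscr{F}[i])$ commutes with arbitrary base change in $\mathscr{M}$, while $\mathbf{Grass}_{\mathscr{M}}(\blank;\ell)$ commutes with base change in its input data by construction. The properness, relative representability, and local almost finite presentation of $\pi$ then come directly from the corresponding properties of the derived Grassmannian of a connective almost perfect complex, as established in \cite[\S 4.1]{J22b}.

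For assertion (3), I would base-change along a morphism $\operatorname{Spec}\kappa \to \mathscr{M}$ and identify the fiber. By Proposition~\ref{thm:Q}\eqref{thm:Q-2}, the restriction of $\mathcal{Q}_{f}(\mathscr{E},\mathscr{F}[i])$ to $\operatorname{Spec}\kappa$ is $\mathcal{Q}_{f_\kappa}(\mathscr{E}|_{X_\kappa},\mathscr{F}|_{X_\kappa}[i])$, whose $\kappa$-linear dual computes $\RR\operatorname{Hom}_{X_\kappa}(\mathscr{E}|_{X_\kappa},\mathscr{F}|_{X_\kappa}[i])$ by the universal property of Proposition~\ref{thm:Q}\eqref{thm:Q-1}. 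Under the connectivity assumption, the dual is coconnective with $\pi_0$ equal to $\Ext^i_{X_\kappa}(\mathscr{E}|_{X_\kappa},\mathscr{F}|_{X_\kappa})$, so $\pi_0$ of $\mathcal{Q}_{f_\kappa}(\mathscr{E}|_{X_\kappa},\mathscr{F}|_{X_\kappa}[i])$ is naturally the $\kappa$-linear dual of this $\Ext$-group. Since $\kappa$-points of the derived Grassmannian of a connective complex classify rank-$\ell$ quotients of $\pi_0$, taking duals identifies these with $\ell$-dimensional $\kappa$-subspaces of $\Ext^i_{X_\kappa}(\mathscr{E}|_{X_\kappa},\mathscr{F}|_{X_\kappa})$, matching the statement.

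Finally, for (4) and (5) I would transcribe the tautological Euler exact triangle on the derived Grassmannian from \cite[\S 4.1]{J22b} with input $\mathcal{Q}_{f}(\mathscr{E},\mathscr{F}[i])$, which yields exactly the stated triangle. For the Tor-amplitude of the relative cotangent complex, the hypothesis that $\mathcal{Q}_{f}(\mathscr{E},\mathscr{F})$ is perfect of Tor-amplitude in $[i,i+k]$ implies, via Remark~\ref{rmk:Q:basic}, that $\mathcal{Q}_{f}(\mathscr{E},\mathscr{F}[i])$ is perfect of Tor-amplitude in $[0,k]$; the bound $[0,k]$ for $\mathbb{L}_{\mathbf{GExt}^i_f(\mathscr{E},\mathscr{F};\ell)/\mathscr{M}}$ then follows from the analogous statement for derived Grassmannians. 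Smoothness when $k=0$ and quasi-smoothness when $k\le 1$ are immediate consequences. I do not anticipate any substantive obstacle: the whole lemma is a clean instantiation of the derived Grassmannian machinery of \cite{J22b} applied to the complex $\mathcal{Q}_{f}(\mathscr{E},\mathscr{F}[i])$ constructed in Section~\ref{sec:Q-complex}, with the real technical work having already been carried out in establishing the universal property, almost perfectness, and base-change compatibility of $\mathcal{Q}$.
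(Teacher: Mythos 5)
Your proposal is correct and takes essentially the same route as the paper: the paper's proof consists of the single sentence that the lemma is immediate from the theory of derived Grassmannians of \cite[\S 4.1]{J22b} applied to $\mathcal{Q}_f(\mathscr{E},\mathscr{F}[i])$, and your writeup simply spells out the verifications (connectivity and almost perfectness of the input complex, base-change compatibility from Proposition~\ref{thm:Q}, the duality over a field identifying rank-$\ell$ quotients of $\pi_0\mathcal{Q}$ with $\ell$-dimensional subspaces of $\Ext^i$, and the shift identity $\mathcal{Q}_f(\mathscr{E},\mathscr{F}[i]) \simeq \mathcal{Q}_f(\mathscr{E},\mathscr{F})[-i]$ for the Tor-amplitude bound) that the paper leaves implicit in that citation.
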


\subsection{Semiorthogonal decompositions for moduli of extensions}
\label{subsec:SOD:Ext}
Let $\mathscr{E}, \mathscr{F} \in \Dqc(\mathscr{X})$ with $\mathscr{E}$ perfect and $\mathscr{F}$ relatively perfect over $\mathscr{M}$. 
Suppose there exists $i \in \mathbb{Z}$ such that $\mathcal{Q}_f(\mathscr{E},\mathscr{F})$ has Tor-amplitude in $[i, i+1]$. By Corollary~\ref{cor:Ext:fiberwise}, this is equivalent to requiring that
\begin{itemize}
    \item For every point $s \in \mathscr{M}$, $\Ext_{X_s}^j(\mathscr{E}|_{X_s}, \mathscr{F}|_{X_s}) = 0$ for $j \neq i, i+1$.
\end{itemize}
The rank of $\mathcal{Q}_{f}(\mathscr{E},\mathscr{F})$ then defines a locally constant function on $\mathscr{M}$; restricting to a connected component, we may assume it is constant, say $r$. Again by Corollary~\ref{cor:Ext:fiberwise}, this is equivalent to: 
\[
r = \dim_{\kappa(s)} \Ext_{X_s}^{i}(\mathscr{E}|_{X_s}, \mathscr{F}|_{X_s}) - \dim_{\kappa(s)} \Ext_{X_s}^{i+1}(\mathscr{E}|_{X_s}, \mathscr{F}|_{X_s}) \quad \text{for all $s \in \mathscr{M}$.}
\]
In this case, $\mathcal{Q}_f(\mathscr{E},\mathscr{F}[i])$ has Tor-amplitude in $[0,1]$. By Serre duality (Proposition~\ref{prop:Serre:Q}),
\[
\mathcal{Q}_f(\mathscr{E},\mathscr{F}[i])^\vee[1] \simeq \mathcal{Q}_f(\mathscr{F}[i], \mathscr{E} \otimes \omega_{f}^\bullet)[1] \simeq \mathcal{Q}_f(\mathscr{F}, \mathscr{E} \otimes \omega_{f}^\bullet[-i-1]).
\]

Applying the main result of \cite[Theorem~7.5]{J22a} (see also \cite[Theorem~3.4]{JL18}) to the perfect complex $\mathcal{Q}_f(\mathscr{E},\mathscr{F}[i])$ of Tor-amplitude $[0,1]$ over $\mathscr{M}$, we obtain:

\begin{theorem}
\label{thm:SOD:PExt}
In the above setting, the natural projections 
	\[\mathbf{PExt}_f^i(\mathscr{E}, \mathscr{F}) \to \mathscr{M} \quad \text{and} \quad \mathbf{PHom}_{f}(\mathscr{F}, \mathscr{E} \otimes \omega_f^\bullet[-i-1]) \to \mathscr{M}\] 
are quasi-smooth.
Moreover, let $\mathrm{D}$ denote $\Dqc$, $\mathrm{D}^\mathrm{-}_{\mathrm{coh}}$, $\mathrm{D}^\mathrm{b}_{\mathrm{coh}}$, or $\mathrm{Perf}$, then:
\begin{enumerate}
    \item If $r > 0$, there is an $\mathscr{M}$-linear semiorthogonal decomposition
    \[
    \mathrm{D}\left(\mathbf{PExt}_f^i(\mathscr{E}, \mathscr{F})\right) = \left\langle \mathrm{D}\left(\mathbf{PHom}_{f}(\mathscr{F}, \mathscr{E} \otimes \omega_f^\bullet[-i-1])\right),\ \text{$r$ copies of } \mathrm{D}(\mathscr{M}) \right\rangle.
    \]
    \item If $r = 0$, there is an $\mathscr{M}$-linear derived equivalence
    \[
    \mathrm{D}\left(\mathbf{PExt}_f^i(\mathscr{E}, \mathscr{F})\right) \simeq \mathrm{D}\left(\mathbf{PHom}_{f}(\mathscr{F}, \mathscr{E} \otimes \omega_f^\bullet[-i-1])\right).
    \]
    \item If $r < 0$, there is an $\mathscr{M}$-linear semiorthogonal decomposition
    \[
    \mathrm{D}\left(\mathbf{PHom}_{f}(\mathscr{F}, \mathscr{E} \otimes \omega_f^\bullet[-i-1])\right) = \left\langle \mathrm{D}\left(\mathbf{PExt}_f^i(\mathscr{E}, \mathscr{F})\right),\ \text{$(-r)$ copies of } \mathrm{D}(\mathscr{M}) \right\rangle.
    \]
\end{enumerate}
\end{theorem}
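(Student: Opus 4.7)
The plan is to reduce the entire statement to a direct application of the duality theorem for derived projectivizations of perfect complexes of Tor-amplitude $[0,1]$, namely \cite[Theorem~7.5]{J22a}, applied to the complex
\[
\mathcal{G} := \mathcal{Q}_f(\mathscr{E}, \mathscr{F}[i]) \simeq \mathcal{Q}_f(\mathscr{E}, \mathscr{F})[-i] \in \mathrm{Perf}(\mathscr{M}),
\]
where the shift identity is given by Remark~\ref{rmk:Q:basic}(1). By hypothesis, $\mathcal{G}$ has Tor-amplitude in $[0,1]$ and rank $r$ on the given connected component of $\mathscr{M}$.

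By Definition~\ref{defn:PExt^i}, the source $\mathbf{PExt}_f^i(\mathscr{E},\mathscr{F})$ is literally $\mathbf{P}(\mathcal{G})$, so Lemma~\ref{lem:PExt}(5) immediately yields quasi-smoothness of its projection to $\mathscr{M}$. The key step is then to identify the ``shifted dual'' projectivization $\mathbf{P}(\mathcal{G}^\vee[1])$ appearing in \cite[Theorem~7.5]{J22a} with the second moduli stack in the statement. Combining relative Serre duality (Proposition~\ref{prop:Serre:Q}, assertion~\eqref{prop:Serre:Q-3}) with the shift identities of Remark~\ref{rmk:Q:basic}(1), I would compute
\[
\mathcal{G}^\vee[1] \simeq \mathcal{Q}_f(\mathscr{F}[i], \mathscr{E}\otimes\omega_f^\bullet)[1] \simeq \mathcal{Q}_f(\mathscr{F}, \mathscr{E}\otimes\omega_f^\bullet[-i-1]),
\]
so that Example~\ref{eg:PHom} identifies $\mathbf{P}(\mathcal{G}^\vee[1])$ with $\mathbf{PHom}_f(\mathscr{F}, \mathscr{E}\otimes\omega_f^\bullet[-i-1])$. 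Since $\mathcal{G}^\vee[1]$ also has Tor-amplitude in $[0,1]$ (with rank $-r$), Lemma~\ref{lem:PExt}(5) applied once more yields quasi-smoothness of the second projection.

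With the two sides identified as $\mathbf{P}(\mathcal{G})$ and $\mathbf{P}(\mathcal{G}^\vee[1])$, I would invoke \cite[Theorem~7.5]{J22a} directly: it produces, according to the sign of $r$, two $\mathscr{M}$-linear semiorthogonal decompositions with $|r|$ copies of $\mathrm{D}(\mathscr{M})$ as the ``excess'' components when $r \neq 0$, and a derived equivalence when $r=0$. Transported through the above identifications, these are precisely the three cases stated in the theorem. The main obstacle is really only bookkeeping---carefully tracking the shifts in the Serre-duality computation so that $\mathcal{G}^\vee[1]$ matches the specific complex whose projectivization is $\mathbf{PHom}_f(\mathscr{F}, \mathscr{E}\otimes\omega_f^\bullet[-i-1])$. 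All other ingredients, including quasi-smoothness, the SOD structure, and $\mathscr{M}$-linearity, are immediate consequences of machinery already established earlier in the paper.
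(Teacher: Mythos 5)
Your proposal is correct and follows essentially the same route as the paper: both identify $\mathbf{PExt}_f^i(\mathscr{E},\mathscr{F})$ as $\mathbf{P}(\mathcal{Q}_f(\mathscr{E},\mathscr{F}[i]))$, use relative Serre duality (Proposition~\ref{prop:Serre:Q}) together with the shift identities of Remark~\ref{rmk:Q:basic} to compute the shifted dual $\mathcal{Q}_f(\mathscr{E},\mathscr{F}[i])^\vee[1] \simeq \mathcal{Q}_f(\mathscr{F}, \mathscr{E}\otimes\omega_f^\bullet[-i-1])$, and then apply the projectivization SOD theorem of \cite[Theorem~7.5]{J22a} to a perfect complex of Tor-amplitude $[0,1]$. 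The bookkeeping of shifts in your Serre-duality computation matches the paper's derivation, and quasi-smoothness via Lemma~\ref{lem:PExt}(5) is the same observation.
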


This theorem can be viewed as a categorification of the fiberwise Riemann--Roch formula:
\[
r =  \dim_{\kappa(s)} \Ext_{X_s}^{i}(\mathscr{E}|_{X_s}, \mathscr{F}|_{X_s}) -  \dim_{\kappa(s)} \Ext_{X_s}^{i+1}(\mathscr{E}|_{X_s}, \mathscr{F}|_{X_s}).
\]  

For the moduli stacks $\mathbf{GExt}_{f}^i$, applying the main result of \cite{J23}, we further obtain:

\begin{theorem}
\label{thm:SOD:GExt}
In the above setting, the natural projections
\[
\mathbf{GExt}_f^i(\mathscr{E}, \mathscr{F}; \ell) \to \mathscr{M}
\quad \text{and} \quad
\mathbf{GExt}_f^0(\mathscr{F}, \mathscr{E} \otimes \omega_f^\bullet[-i-1]; \ell') \to \mathscr{M}
\]
are quasi-smooth for all integers $\ell, \ell'$. 
Moreover, suppose $\mathscr{M}$ is a derived stack over a field of characteristic zero, and let $\mathrm{D}$ denote $\Dqc$, $\mathrm{D}^-_{\mathrm{coh}}$, $\mathrm{D}^\mathrm{b}_{\mathrm{coh}}$, or $\mathrm{Perf}$. Then:
\begin{enumerate}
    \item If $r > 0$, for any $\ell \geq 0$, there is an $\mathscr{M}$-linear semiorthogonal decomposition
    \[
    \mathrm{D}\left(\mathbf{GExt}_f^i(\mathscr{E}, \mathscr{F};\ell)\right) 
    = \left \langle  \text{$\binom{r}{j}$ copies of } \mathrm{D}\left(\mathbf{GExt}_{f}^0(\mathscr{F}, \mathscr{E} \otimes \omega_f^\bullet[-i-1]; \ell-j) \right) \mid 0 \leq j \leq \ell \right\rangle.
    \]
    \item If $r = 0$, for any $\ell \geq 0$, there is an $\mathscr{M}$-linear equivalence
    \[
    \mathrm{D}\left(\mathbf{GExt}_f^i(\mathscr{E}, \mathscr{F};\ell)\right) \simeq \mathrm{D}\left(\mathbf{GExt}_{f}^0(\mathscr{F}, \mathscr{E} \otimes \omega_f^\bullet[-i-1]; \ell)\right).
    \]
    \item If $r < 0$, for any $\ell \geq 0$, there is an $\mathscr{M}$-linear semiorthogonal decomposition
    \[
    \mathrm{D}\left(\mathbf{GExt}_{f}^0(\mathscr{F}, \mathscr{E} \otimes \omega_f^\bullet[-i-1]; \ell)\right) 
    = \left\langle  \text{$\binom{-r}{j}$ copies of } \mathrm{D}\left(\mathbf{GExt}_f^i(\mathscr{E}, \mathscr{F};\ell-j)\right)  \mid 0 \leq j \leq \ell  \right\rangle.
    \]
\end{enumerate}
\end{theorem}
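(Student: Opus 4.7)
The plan is to reduce the theorem to a direct application of the author's semiorthogonal decomposition theorem for derived Grassmannians in \cite{J23}. The whole content of the statement, beyond those identifications, is that result.

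First, I would unpack the two sides as derived Grassmannians of a single perfect complex and its "dual". By hypothesis $\mathcal{Q}_f(\mathscr{E},\mathscr{F})$ is perfect of Tor-amplitude $[i,i+1]$ with constant rank $r$, so
\[
\mathcal{G} := \mathcal{Q}_f(\mathscr{E},\mathscr{F}[i]) \simeq \mathcal{Q}_f(\mathscr{E},\mathscr{F})[-i]
\]
is a perfect complex on $\mathscr{M}$ of Tor-amplitude $[0,1]$ and rank $r$. By the very definition of $\mathbf{GExt}_f^i$ we have $\mathbf{GExt}_f^i(\mathscr{E},\mathscr{F};\ell) = \mathbf{Grass}_{\mathscr{M}}(\mathcal{G};\ell)$. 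For the dual side, combining the shift identities for $\mathcal{Q}$ from Remark \ref{rmk:Q:basic}(1) with relative Serre duality (Proposition \ref{prop:Serre:Q}\eqref{prop:Serre:Q-3}), we get the chain of equivalences already recorded in the paragraph preceding Theorem \ref{thm:SOD:PExt},
\[
\mathcal{G}^\vee[1] \simeq \mathcal{Q}_f(\mathscr{F}[i], \mathscr{E}\otimes \omega_f^\bullet)[1] \simeq \mathcal{Q}_f(\mathscr{F}, \mathscr{E}\otimes \omega_f^\bullet[-i-1]),
\]
so $\mathbf{Grass}_{\mathscr{M}}(\mathcal{G}^\vee[1];\ell') = \mathbf{GExt}_f^0(\mathscr{F}, \mathscr{E}\otimes \omega_f^\bullet[-i-1];\ell')$. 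Note that $\mathcal{G}^\vee[1]$ is also perfect of Tor-amplitude $[0,1]$ and has rank $-r$.

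Next, the quasi-smoothness of both projections is automatic from the last item (Finiteness and smoothness) of the lemma just preceding Theorem \ref{thm:SOD:GExt}, applied with $k=1$: since both $\mathcal{G}$ and $\mathcal{G}^\vee[1]$ have Tor-amplitude $[0,1]$ over $\mathscr{M}$, the relative cotangent complexes of the two derived Grassmannians are perfect of Tor-amplitude $[0,1]$, i.e.\ both projections are quasi-smooth.

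Finally, I would invoke the main theorem of \cite{J23}, applied to the perfect complex $\mathcal{G}$ of Tor-amplitude $[0,1]$ and rank $r$ on $\mathscr{M}$: in characteristic zero it yields, for each integer $\ell \ge 0$, an $\mathscr{M}$-linear semiorthogonal decomposition of $\mathrm{D}\bigl(\mathbf{Grass}_{\mathscr{M}}(\mathcal{G};\ell)\bigr)$ assembled from $\binom{r}{j}$ copies of $\mathrm{D}\bigl(\mathbf{Grass}_{\mathscr{M}}(\mathcal{G}^\vee[1];\ell-j)\bigr)$ for $0\le j\le \ell$ when $r>0$, the reverse decomposition with multiplicities $\binom{-r}{j}$ when $r<0$, and a plain equivalence when $r=0$. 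Substituting the two identifications above translates this verbatim into the three cases (1)--(3) of the theorem. The $\mathrm{D}$ can be taken as any of $\Dqc,\mathrm{D}^-_{\mathrm{coh}},\mathrm{D}^{\mathrm{b}}_{\mathrm{coh}},\mathrm{Perf}$, as these variants are treated uniformly in \cite{J23}.

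The real content of the proof is hidden in \cite{J23}, not in any new argument here; the only subtlety to flag is the characteristic zero hypothesis, which enters through the Schur-complex/Borel--Weil--Bott input used in \cite{J23} to build the fully faithful embeddings and verify semiorthogonality for $\ell \ge 2$. The rank-$1$ case $\ell=1$ of Theorem \ref{thm:SOD:PExt} already works in arbitrary characteristic, but for general $\ell$ the $\mathbb{Q}$-scheme assumption cannot be dropped with the current methods. I do not anticipate any other obstacle in the translation.
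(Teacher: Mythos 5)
Your proposal matches the paper's proof exactly: the paper establishes the identification $\mathcal{Q}_f(\mathscr{E},\mathscr{F}[i])^\vee[1]\simeq\mathcal{Q}_f(\mathscr{F},\mathscr{E}\otimes\omega_f^\bullet[-i-1])$ via Remark~\ref{rmk:Q:basic} and relative Serre duality (Proposition~\ref{prop:Serre:Q}) in the paragraph preceding Theorem~\ref{thm:SOD:PExt}, and then states Theorem~\ref{thm:SOD:GExt} as a direct application of the main result of~\cite{J23} to the perfect complex $\mathcal{Q}_f(\mathscr{E},\mathscr{F}[i])$ of Tor-amplitude $[0,1]$. Your remarks on quasi-smoothness (from the finiteness/smoothness clause of the preceding lemma) and on the role of the characteristic-zero hypothesis are accurate and consistent with the paper.
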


The results above have a wide range of applications.
Before focusing on families of curves in the next section, we briefly illustrate the scope of Theorem~\ref{thm:SOD:PExt} with examples involving pairs, surfaces, and Calabi--Yau families; Theorem~\ref{thm:SOD:GExt} can be applied similarly in these cases.

\begin{example}[Pairs]
\label{eg:stable.pairs}
Let $\mathscr{X} \to \mathscr{M}$ be a flat, proper family of classical varieties. Suppose that for each $s \in \mathscr{M}$,  $\mathscr{F}|_{X_s}$ is a coherent sheaf on $X_s$ with support of dimension at most one, i.e., $\mathscr{F}|_{X_s} \in \mathrm{Coh}_{\le 1}(X_s)$. Then the conditions of the theorems are satisfied for $i=0$ and $\mathscr{E} = \mathscr{O}_{\mathscr{X}}$.

In this case, the derived moduli stack $\mathbf{PHom}_f(\mathscr{O}_{\mathscr{X}}, \mathscr{F})$ parametrizes equivalence classes of pairs $(F, v)$, where $F = \mathscr{F}|_{X_s}$ and $v \colon \mathscr{O}_{X_s} \to F$ is a nonzero section. This moduli space is closely related to the stable pair moduli studied by Pandharipande--Thomas~\cite{PT09, PT}, Bridgeland~\cite{Bri11}, and Bryan--Steinberg~\cite{BS16, DPSV23}. In the case of curves, it also connects to Thaddeus pairs~\cite{Tha}, as discussed further in \S\ref{subsec:coh.sys}.
\end{example}

\begin{example}[Surface case]
Let $\mathscr{X} \to \mathscr{M}$ be a flat family of proper Cohen--Macaulay surfaces, and set $\omega := \omega_f^\bullet[-2]$ for the relative dualizing sheaf. For $i=0$, Theorem \ref{thm:SOD:PExt} implies:
\begin{align*}
&\mathrm{D}\left(\mathbf{PHom}_f(\mathscr{E}, \mathscr{F})\right) =
\left\langle 
    \mathrm{D}\left(\mathbf{PExt}^1_{f}(\mathscr{F},\, \mathscr{E} \otimes \omega)\right),\ 
    \text{$r$ copies of } \mathrm{D}(\mathscr{M}) 
\right\rangle 
&\quad \text{if $r \ge 0$}, \\
&\mathrm{D}\left(\mathbf{PExt}^1_f(\mathscr{F},\, \mathscr{E} \otimes \omega)\right) =
\left\langle 
    \mathrm{D}\left(\mathbf{PHom}_{f}(\mathscr{E}, \mathscr{F})\right),\ 
    \text{$(-r)$ copies of } \mathrm{D}(\mathscr{M}) 
\right\rangle 
&\quad \text{if $r \le 0$}.
\end{align*}
A similar statement holds for $i=1$, and these results extend to the stacks $\mathbf{GExt}_f^i$ in characteristic zero by Theorem~\ref{thm:SOD:GExt}.
These results further generalize to two-dimensional categories, including Calabi--Yau 2 (CY2) categories as in~\cite{Kuz10, AT14}; see also~\cite{Che21} and~\cite[\S 5.3]{J19}.
\end{example}

\begin{example}[Calabi--Yau families]
Let $f \colon \mathscr{X} \to \mathscr{M}$ be a flat, proper, representable morphism. We say $f$ is a \emph{Calabi--Yau family of dimension $d \ge 0$} if all its geometric fibers are (classical) Gorenstein varieties of dimension $d$, and there exists a line bundle $\mathscr{L}$ on $\mathscr{M}$ such that $\omega_f^\bullet \simeq f^*\mathscr{L}[d]$. In this case, each fiber $X_s$ is a Calabi--Yau variety of dimension $d$, with dualizing complex $\omega_{X_s}^\bullet \simeq \mathscr{O}_{X_s}[d]$. Theorem \ref{thm:SOD:PExt} then gives:
\begin{align*}
  &\mathrm{D}\left(\mathbf{PExt}_f^i(\mathscr{E}, \mathscr{F})\right) = 
    \left\langle 
      \mathrm{D}\left(\mathbf{PExt}_{f}^{d-i-1}(\mathscr{F}, \mathscr{E}) \right),\ 
      \text{$r$ copies of } \mathrm{D}(\mathscr{M}) 
    \right\rangle 
  && \text{if $r \ge 0$}, \\
  &\mathrm{D}\left(\mathbf{PExt}_f^{d-i-1}(\mathscr{F}, \mathscr{E})\right) = 
    \left\langle 
      \mathrm{D}\left(\mathbf{PExt}_{f}^{i}(\mathscr{E}, \mathscr{F}) \right),\ 
      \text{$(-r)$ copies of } \mathrm{D}(\mathscr{M}) 
    \right\rangle 
  && \text{if $r \le 0$}.
\end{align*}
\end{example}

We further remark that Brill--Noether settings, as described in \S\ref{sec:BN}, can be incorporated into the above examples, and the framework of this section is especially useful in these contexts.

 \section{Abel maps for integral curves}
In this section, we focus on families $X/S$ of integral curves and prove the main results outlined in the introduction using the theory developed in previous sections.

\subsection{Abel maps in general}
\label{subsec:Abel:general}
We begin with a slightly more general framework. This setup applies, in particular, to the case $\mathscr{M} = \overline{\mathrm{Pic}}_{X/S}^d$, the compactified Picard scheme, as well as its suitable derived enhancement, and their higher-rank variants (see \S \ref{subsec:coh.sys}). 

Let $X \to S$ be a flat, locally projective, and finitely presented morphism of schemes, whose geometric fibers are integral curves of arithmetic genus $p_a$. Suppose we have a derived $S$-stack $\mathscr{M} \to S$ and a Cartesian diagram
\[
\begin{tikzcd}
    \mathscr{X} \arrow[r] \arrow[d, "f"'] & X \arrow[d] \\
   \mathscr{M} \arrow[r] & S.
\end{tikzcd}
\]
Assume there exists a ``universal'' object $\mathscr{J} \in \Dqc(\mathscr{X})$, perfect relative to $\mathscr{M}$, such that for each fiber $X_s$, we have $H^j(X_s, \mathscr{J}|_{X_s}) = 0$ for $j \neq 0,1$. (This condition is automatic if each $\mathscr{J}|_{X_s}$ is a coherent sheaf.)

By Corollary~\ref{cor:Ext:fiberwise}, the complex $\mathcal{Q}_f(\mathscr{O}_{\mathscr{X}}, \mathscr{J})$ is perfect of Tor-amplitude $[0,1]$, with locally constant rank. By passing to connected components of $\mathscr{M}$, we may assume the rank is constant, say $r$. Again by Corollary~\ref{cor:Ext:fiberwise}, this means
\begin{equation}\label{eqn:Abel:r}
r = \dim_{\kappa(s)} H^0(X_s, \mathscr{J}|_{X_s}) - \dim_{\kappa(s)} H^1(X_s, \mathscr{J}|_{X_s}) \quad \text{for all } s \in \mathscr{M}.
\end{equation}

Let $\omega  := \omega_{f}^\bullet[-1]$ denote the relative dualizing sheaf. For any $s \in \mathscr{M}$, its restriction $\omega|_{X_s}$ is the absolute dualizing sheaf of the curve $X_s$ (\cite[Proposition 6.6.3.1]{SAG}).

\begin{definition}[Abel Maps]
\label{defn:abel:LinSys}
We define the Abel maps in this setting as
\begin{align*}
    &\mathbf{A}^{\mathscr{J}} \colon \quad \mathbf{LinSys}_{X/S}^{\mathscr{J}} := \mathbf{PHom}_f(\mathscr{O}_{\mathscr{X}}, \mathscr{J}) \longrightarrow \mathscr{M}, \\
    &\mathbf{A}^{\mathscr{J}}_{\omega} \colon \quad \mathbf{Quot}_{\omega/X/S}^{\mathscr{J}} := \mathbf{PHom}_f(\mathscr{J}, \omega) \longrightarrow \mathscr{M},
\end{align*}
where $\mathbf{PHom}_f$ is the derived moduli of $0$-th extensions as in Definition~\ref{defn:PExt^i} and Example~\ref{eg:PHom}.
\end{definition}

Concretely, by Lemma~\ref{lem:PExt:T-points}, for any map $\eta \colon T \to \mathscr{M}$ from a derived affine scheme $T$ (with $f_T \colon X_T \to T$ the base change of $f$), the $T$-points $\mathbf{LinSys}_{X/S}^{\mathscr{J}}(T)$ and $\mathbf{Quot}_{\omega/X/S}^{\mathscr{J}}(T)$ over $\eta$ parametrize pairs $(u, \mathscr{L})$ and $(v, \mathscr{L}')$, where $\mathscr{L}, \mathscr{L}' \in \mathrm{Pic}(T)$, and
\[
u \colon \mathscr{O}_T \to \mathscr{J}_T \otimes f_T^*\mathscr{L}, \qquad
v \colon \mathscr{J}_T \otimes f_T^*(\mathscr{L}'^\vee) \to \omega_T
\]
are homomorphisms in $\Dqc(X_T)$ which are fiberwise nonzero over $T$ (see Definition~\ref{defn:fiberwise.nonzero}).

In particular, for any point $s \colon \operatorname{Spec} \kappa \to \mathscr{M}$ (with $X_s$ the fiber of $f$ over $s$), the fibers of the classical truncations of the Abel maps are the projective spaces
\begin{equation}\label{eqn:fibers:abel}
\mathbb{P}\left(H^0(X_s, \mathscr{J}|_{X_s})^\vee\right)
\quad \text{and} \quad
\mathbb{P}\left(\operatorname{Hom}_{X_s}(\mathscr{J}|_{X_s}, \omega_{X_s})^\vee\right) \simeq \mathbb{P}\left(H^1(X_s, \mathscr{J}|_{X_s})\right),
\end{equation}
where we use Grothendieck's convention: for a $\kappa$-vector space $V$, the $\kappa$-points of $\mathbb{P}(V^\vee)$ are $(V \setminus \{0\})/\kappa^\times$. These are derived generalizations of the usual Abel maps \cite[(8.2)]{AK80}.

We summarize the main properties of the Abel maps in this context as follows:

\begin{theorem}
\label{thm:SOD:abel:LinSys}
In the setting above, let $\mathrm{D}$ denote any of $\Dqc$, $\mathrm{D}^-_{\mathrm{coh}}$, $\mathrm{D}^\mathrm{b}_{\mathrm{coh}}$, or $\mathrm{Perf}$. Let $r$ denote the rank of $\mathcal{Q}_f(\mathscr{O}_{X}, \mathscr{J})$, equivalently given by formula~\eqref{eqn:Abel:r}. Then:
\begin{enumerate}
    \item The Abel maps $\mathbf{A}^{\mathscr{J}}$ and $\mathbf{A}^{\mathscr{J}}_{\omega}$ are quasi-smooth of relative virtual dimension $r-1$ and $-r-1$, respectively. 
    \item
    \begin{enumerate}
        \item If $r > 0$, there is an $\mathscr{M}$-linear semiorthogonal decomposition
        \[
        \mathrm{D}\big( \mathbf{LinSys}_{X/S}^{\mathscr{J}} \big) = \left\langle \mathrm{D}\big( \mathbf{Quot}_{\omega/X/S}^{\mathscr{J}} \big),\ \text{$r$ copies of } \mathrm{D}(\mathscr{M}) \right\rangle.
        \]
        \item If $r = 0$, there is an $\mathscr{M}$-linear derived equivalence
        \[
        \mathrm{D}\big( \mathbf{LinSys}_{X/S}^{\mathscr{J}} \big) \simeq \mathrm{D}\big( \mathbf{Quot}_{\omega/X/S}^{\mathscr{J}} \big).
        \]
        \item If $r < 0$, there is an $\mathscr{M}$-linear semiorthogonal decomposition
        \[
        \mathrm{D}\big( \mathbf{Quot}_{\omega/X/S}^{\mathscr{J}} \big) = \left\langle \mathrm{D}\big( \mathbf{LinSys}_{X/S}^{\mathscr{J}} \big),\ \text{$(-r)$ copies of } \mathrm{D}(\mathscr{M}) \right\rangle.
        \]
    \end{enumerate}
    \item If $\omega$ is locally truncated (for example, if $\mathscr{M}$ is classical, or if $\mathscr{O}_{\mathscr{M}}$ is locally truncated), set $\mathbb{D}_\omega(\mathscr{J}) := \sHom_X(\mathscr{J}, \omega)$. Then there is a canonical isomorphism of derived stacks
    \[
    \mathbf{LinSys}_{X/S}^{\mathscr{J}} \simeq \mathbf{Quot}_{\omega/X/S}^{\mathbb{D}_\omega(\mathscr{J})}.
    \]
\end{enumerate}
\end{theorem}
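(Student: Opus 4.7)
The plan is to deduce this theorem directly from the general Theorem~\ref{thm:SOD:PExt} via the specialization $\mathscr{E} = \mathscr{O}_{\mathscr{X}}$, $\mathscr{F} = \mathscr{J}$, $i = 0$. Under this choice, $\mathbf{PExt}_f^0(\mathscr{O}_{\mathscr{X}}, \mathscr{J}) = \mathbf{PHom}_f(\mathscr{O}_{\mathscr{X}}, \mathscr{J}) = \mathbf{LinSys}_{X/S}^{\mathscr{J}}$ by definition, and since $\omega = \omega_f^\bullet[-1]$, the paired ``dual'' moduli stack appearing in Theorem~\ref{thm:SOD:PExt} becomes $\mathbf{PHom}_f(\mathscr{J}, \mathscr{O}_{\mathscr{X}} \otimes \omega_f^\bullet[-1]) = \mathbf{PHom}_f(\mathscr{J}, \omega) = \mathbf{Quot}_{\omega/X/S}^{\mathscr{J}}$. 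With these identifications in hand, part (2) will be a direct quotation of Theorem~\ref{thm:SOD:PExt} in each of the three sign regimes for $r$, and the resulting decompositions are automatically $\mathscr{M}$-linear by the same theorem.

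To verify the hypotheses, the fiberwise vanishing $H^j(X_s, \mathscr{J}|_{X_s}) = 0$ for $j \neq 0, 1$ combined with Corollary~\ref{cor:Ext:fiberwise}(2) forces $\mathcal{Q}_f(\mathscr{O}_{\mathscr{X}}, \mathscr{J})$ to be perfect of Tor-amplitude $[0, 1]$, with locally constant rank given by formula~\eqref{eqn:Abel:r}, equal to $r$ on a fixed component. Applying relative Serre duality (Proposition~\ref{prop:Serre:Q}(3)) together with the shift identities in Remark~\ref{rmk:Q:basic}(1), I obtain $\mathcal{Q}_f(\mathscr{J}, \omega) \simeq \mathcal{Q}_f(\mathscr{O}_{\mathscr{X}}, \mathscr{J})^\vee[-1]$, which is again perfect of Tor-amplitude $[0,1]$ but of rank $-r$ (the dual preserves rank and the single shift reverses its sign). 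The quasi-smoothness of both Abel maps and the relative virtual dimensions $r-1$ and $-r-1$ then read off from Lemma~\ref{lem:PExt}(4)--(5): the Euler exact triangle for the cotangent complex of the derived projectivization yields a cotangent complex of Tor-amplitude $[0,1]$ and rank $\mathrm{rank}(\mathcal{Q}) - 1$, giving $r-1$ for $\mathbf{LinSys}_{X/S}^{\mathscr{J}}$ and $-r-1$ for $\mathbf{Quot}_{\omega/X/S}^{\mathscr{J}}$. This settles parts (1) and (2).

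For part (3), the key input is the relative coherent duality of Proposition~\ref{prop:coh:duality}. Under the local truncatedness assumption on $\omega$, the complex $\mathbb{D}_\omega(\mathscr{J}) := \sHom_X(\mathscr{J}, \omega)$ is a well-defined almost perfect complex that is relatively perfect over $\mathscr{M}$ by Proposition~\ref{prop:coh:duality}(1). Writing $\mathbb{D}$ for duality against $\omega_f^\bullet = \omega[1]$, I compute $\mathbb{D}(\mathscr{O}_{\mathscr{X}}) = \omega[1]$ and $\mathbb{D}(\mathscr{J}) = \mathbb{D}_\omega(\mathscr{J})[1]$, so Proposition~\ref{prop:coh:duality}(2) combined with Remark~\ref{rmk:Q:basic}(1) produces natural equivalences
\[
\mathcal{Q}_f(\mathscr{O}_{\mathscr{X}}, \mathscr{J}) \simeq \mathcal{Q}_f(\mathbb{D}(\mathscr{J}), \mathbb{D}(\mathscr{O}_{\mathscr{X}})) = \mathcal{Q}_f(\mathbb{D}_\omega(\mathscr{J})[1], \omega[1]) \simeq \mathcal{Q}_f(\mathbb{D}_\omega(\mathscr{J}), \omega),
\]
where the two shifts cancel in the final step. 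Passing to derived projectivizations yields the claimed isomorphism $\mathbf{LinSys}_{X/S}^{\mathscr{J}} \simeq \mathbf{Quot}_{\omega/X/S}^{\mathbb{D}_\omega(\mathscr{J})}$. Essentially the entire theorem is a formal consequence of the $\mathcal{Q}$-complex machinery of Section~\ref{sec:Q-complex} and the abstract SOD of Section~\ref{sec:extension}; the only point requiring genuine care is bookkeeping the shift between $\omega$ and $\omega_f^\bullet$, which is precisely what makes $\mathbb{D}_\omega$ (rather than $\mathbb{D}$) the natural normalization in the statement.
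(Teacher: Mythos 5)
Your proof follows exactly the paper's route: part (1) via Lemma~\ref{lem:PExt}(5), part (2) by specializing Theorem~\ref{thm:SOD:PExt} with $\mathscr{E}=\mathscr{O}_{\mathscr{X}}$, $\mathscr{F}=\mathscr{J}$, $i=0$, and part (3) from the coherent-duality equivalence $\mathcal{Q}_f(\mathscr{O}_{\mathscr{X}},\mathscr{J})\simeq\mathcal{Q}_f(\mathbb{D}_\omega(\mathscr{J}),\omega)$ supplied by Proposition~\ref{prop:coh:duality}, and your identification of $\mathbf{PHom}_f(\mathscr{J},\mathscr{O}_{\mathscr{X}}\otimes\omega_f^\bullet[-1])$ with $\mathbf{Quot}^{\mathscr{J}}_{\omega/X/S}$ is precisely how the paper matches up the two sides. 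One small slip: the Serre-duality computation should give $\mathcal{Q}_f(\mathscr{J},\omega)\simeq\mathcal{Q}_f(\mathscr{O}_{\mathscr{X}},\mathscr{J})^\vee[1]$ (not $[-1]$), which indeed has Tor-amplitude $[0,1]$ and rank $-r$ as you conclude; $\mathcal{Q}^\vee[-1]$ would sit in Tor-amplitude $[-2,-1]$, so the formula as written is inconsistent with the conclusion you correctly draw from it.
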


\begin{proof}
Part (1) follows from Lemma~\ref{lem:PExt}.(5).  
Part (2) is a consequence of Theorem~\ref{thm:SOD:PExt}.  
Part (3) follows from the coherent duality $\mathcal{Q}_f(\mathscr{O}_{X}, \mathscr{J}) \simeq \mathcal{Q}_f(\mathbb{D}_\omega(\mathscr{J}), \omega)$ of Proposition~\ref{prop:coh:duality}.
\end{proof}
 
In characteristic zero, applying Theorem~\ref{thm:SOD:GExt}, we also obtain:

\begin{theorem}
\label{thm:SOD:abel:GLinSys}
In the above setting, assume further that $X \to S$ are schemes over a field of characteristic zero. For any $\ell \geq 0$, the generalized Abel maps
\begin{align*}
&\mathbf{A}^{\mathscr{J}, \ell} \colon \quad \mathbf{G}_{\ell}\mathbf{LinSys}_{X/S}^{\mathscr{J}} := \mathbf{GExt}_f^0(\mathscr{O}_{X}, \mathscr{J}; \ell) \to \mathscr{M}, \\
&\mathbf{A}^{\mathscr{J}, \ell}_{\omega} \colon \quad \mathbf{G}_{\ell}\mathbf{Quot}_{\omega/X/S}^{\mathscr{J}} := \mathbf{GExt}_f^0(\mathscr{J}, \omega; \ell) \to \mathscr{M}
\end{align*}
are quasi-smooth. Moreover, there are $\mathscr{M}$-linear semiorthogonal decompositions:
\begin{align*}
    \mathrm{D}\big(\mathbf{G}_{\ell}\mathbf{LinSys}_{X/S}^{\mathscr{J}}\big) &= \left\langle \text{$\binom{r}{j}$ copies of } \mathrm{D}\big(\mathbf{G}_{\ell-j}\mathbf{Quot}_{\omega/X/S}^{\mathscr{J}}\big) \mid 0 \leq j \leq \ell \right\rangle \quad \text{if $r \geq 0$}, \\
    \mathrm{D}\big(\mathbf{G}_{\ell}\mathbf{Quot}_{\omega/X/S}^{\mathscr{J}}\big) &= \left\langle \text{$\binom{-r}{j}$ copies of } \mathrm{D}\big(\mathbf{G}_{\ell-j}\mathbf{LinSys}_{X/S}^{\mathscr{J}}\big) \mid 0 \leq j \leq \ell \right\rangle \quad \text{if $r \leq 0$}.
\end{align*}
Furthermore, if $\omega$ is locally truncated, then for all $\ell \geq 0$ there is a canonical isomorphism:
\[
    \mathbf{G}_{\ell}\mathbf{LinSys}_{X/S}^{\mathscr{J}} \simeq \mathbf{G}_{\ell}\mathbf{Quot}_{\omega/X/S}^{\mathbb{D}_{\omega}(\mathscr{J})}.
\]
\end{theorem}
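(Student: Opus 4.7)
The plan is to apply Theorem~\ref{thm:SOD:GExt} directly to the pair $(\mathscr{E}, \mathscr{F}) = (\mathscr{O}_\mathscr{X}, \mathscr{J})$ with $i = 0$. Under the standing hypotheses of \S\ref{subsec:Abel:general}, the complex $\mathcal{Q}_f(\mathscr{O}_\mathscr{X}, \mathscr{J})$ is perfect of Tor-amplitude $[0,1]$ with locally constant rank $r$ (given by \eqref{eqn:Abel:r}), and the shift $\omega_f^\bullet[-0-1]$ is precisely $\omega$. Unwinding Definitions \ref{defn:abel:LinSys} and \ref{defn:PExt^i} and the definition of $\mathbf{GExt}$ in \S\ref{subsec:GExt}, one gets the identifications $\mathbf{GExt}_f^0(\mathscr{O}_\mathscr{X}, \mathscr{J}; \ell) = \mathbf{G}_\ell\mathbf{LinSys}_{X/S}^{\mathscr{J}}$ and $\mathbf{GExt}_f^0(\mathscr{J}, \omega; \ell) = \mathbf{G}_\ell\mathbf{Quot}_{\omega/X/S}^{\mathscr{J}}$.

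For the quasi-smoothness statement I will invoke the finiteness/smoothness clause of the Lemma immediately preceding Theorem~\ref{thm:SOD:GExt}: since $\mathcal{Q}_f(\mathscr{O}_\mathscr{X}, \mathscr{J})$ has Tor-amplitude $\le 1$, the projection $\mathbf{G}_\ell\mathbf{LinSys}_{X/S}^{\mathscr{J}} \to \mathscr{M}$ is quasi-smooth. For the second projection it is natural to pass to the dual via Serre duality (Proposition~\ref{prop:Serre:Q}), giving a canonical equivalence $\mathcal{Q}_f(\mathscr{J}, \omega) \simeq \mathcal{Q}_f(\mathscr{O}_\mathscr{X}, \mathscr{J})^\vee[-1]$, which is again perfect of Tor-amplitude $[0,1]$ (with rank $-r$); hence $\mathbf{G}_\ell\mathbf{Quot}_{\omega/X/S}^{\mathscr{J}} \to \mathscr{M}$ is quasi-smooth as well. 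Note that this part does not require characteristic zero, paralleling Theorem~\ref{thm:SOD:abel:LinSys}(1).

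For the semiorthogonal decompositions, I will simply cite Theorem~\ref{thm:SOD:GExt} in its three cases $r > 0$, $r = 0$, $r < 0$: no extra argument is needed beyond the identifications above. This is precisely the step that requires the $\mathbb{Q}$-scheme hypothesis, since Theorem~\ref{thm:SOD:GExt} rests on the main result of \cite{J23}, whose proof proceeds in characteristic zero.

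Finally, for the isomorphism $\mathbf{G}_\ell\mathbf{LinSys}_{X/S}^{\mathscr{J}} \simeq \mathbf{G}_\ell\mathbf{Quot}_{\omega/X/S}^{\mathbb{D}_\omega(\mathscr{J})}$ when $\omega$ is locally truncated, I will apply the relative coherent duality of Proposition~\ref{prop:coh:duality} to obtain a canonical, base-change compatible equivalence $\mathcal{Q}_f(\mathscr{O}_\mathscr{X}, \mathscr{J}) \simeq \mathcal{Q}_f(\mathbb{D}_\omega(\mathscr{J}), \omega)$ in $\Dqc(\mathscr{M})$. The universal property of the derived Grassmannian from \cite{J22b} then transports this identification verbatim to the two derived moduli stacks. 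The only technical point worth checking here is that $\omega = \omega_f^\bullet[-1]$ is indeed locally truncated under the stated hypotheses (it is, whenever $\mathscr{O}_\mathscr{M}$ is, and in particular whenever $\mathscr{M}$ is classical), which is exactly the hypothesis of Proposition~\ref{prop:coh:duality}; I do not foresee any further obstacles.
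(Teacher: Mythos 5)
Your proposal is correct and follows exactly the route the paper intends: the paper introduces the theorem with the one-line phrase ``In characteristic zero, applying Theorem~\ref{thm:SOD:GExt}, we also obtain:'' and leaves the unwinding to the reader, which is precisely what you do. All the identifications (the $\mathbf{GExt}_f^0$ descriptions, the quasi-smoothness from Tor-amplitude $[0,1]$, the binomial decomposition from the three cases of Theorem~\ref{thm:SOD:GExt}, and the coherent-duality isomorphism with $\mathbb{D}_\omega(\mathscr{J})$ from Proposition~\ref{prop:coh:duality}) are the right ones and match the corresponding steps in the proof of Theorem~\ref{thm:SOD:abel:LinSys}. One small slip: the Serre-duality identification should read $\mathcal{Q}_f(\mathscr{J}, \omega) \simeq \mathcal{Q}_f(\mathscr{O}_{\mathscr{X}}, \mathscr{J})^\vee[1]$, not $[-1]$; with $[-1]$ the Tor-amplitude would be $[-2,-1]$, contradicting the conclusion you correctly state. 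The sign error is immaterial because (a) your stated conclusion about Tor-amplitude $[0,1]$ and rank $-r$ is right, and (b) the quasi-smoothness of both projections is already asserted in Theorem~\ref{thm:SOD:GExt}, so the Serre-duality detour is not strictly needed. Your observation that quasi-smoothness holds without the characteristic-zero hypothesis is also consistent with the way Theorem~\ref{thm:SOD:GExt} separates that clause from the semiorthogonal decomposition clause.
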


    \subsection{Abel maps for compactified Picard schemes}
  \label{subsec:Abel:cPic}

We now focus on the case $\mathscr{M} = \overline{\mathrm{Pic}}_{X/S}^d$, the compactified Picard scheme from the introduction. While our arguments extend to suitable derived enhancements of $\overline{\mathrm{Pic}}_{X/S}^d$, we leave such generalizations for future work.

\subsubsection{General cases}
\label{subsec:Abel:integral.curves}
In the setting of the Introduction, by \cite[Thm.~3.4(3)]{AK79}, \cite[(8.5)]{AK80}, the compactified Picard scheme $\mathscr{M} = \overline{\mathrm{Pic}}_{X/S}^d$ is a projective $S$-scheme equipped with a universal coherent sheaf $\mathscr{J} \in \mathrm{Coh}(\mathscr{X})$, which is flat over $\mathscr{M}$ and whose restriction $\mathscr{J}|_{X_s}$ to each fiber $X_s$, where $s \in \mathscr{M}$, is a rank $1$, torsion-free sheaf of degree $d$. Here, the degree of $\mathscr{J}|_{X_s}$ is defined by
\[
\deg(\mathscr{J}|_{X_s}) := \chi(X_s, \mathscr{J}|_{X_s}) - \chi(X_s, \mathscr{O}_{X_s}),
\]
where $\chi(X_s, \mathscr{F}) := \dim_{\kappa(s)} H^0(X_s, \mathscr{F}) - \dim_{\kappa(s)} H^1(X_s, \mathscr{F})$. Thus, the degree $d$ and the integer $r$ in formula~\eqref{eqn:Abel:r} are related by the Riemann--Roch formula: 
\[
r  = 1 - p_a + d
\]

To distinguish the case for each degree $d \in \mathbb{Z}$, we write $\mathscr{J}_d$ for the universal sheaf $\mathscr{J}$ in the case $\mathscr{M}=\overline{\mathrm{Pic}}_{X/S}^d$, and denote $f$ by $f_d\colon \mathscr{X} \to \overline{\mathrm{Pic}}_{X/S}^d$.

For each $d$, we consider the Abel map $\mathbf{A}_{\omega}^{\mathscr{J}_d} \colon \mathbf{Quot}_{\omega/X/S}^{\mathscr{J}_d} \to \overline{\mathrm{Pic}}_{X/S}^d$ of Definition~\ref{defn:abel:LinSys}, which in this context we denote by
\[
\mathbf{A}_\omega^d \colon \mathbf{Quot}_{\omega/X/S}^{2p_a - 2 - d} \to \overline{\mathrm{Pic}}_{X/S}^d.
\]

By \cite[\href{https://stacks.math.columbia.edu/tag/0C0Z}{Tag~0C0Z}]{stacks-project}, for the Cohen--Macaulay morphism $f_d\colon \mathscr{X} \to \mathscr{M} = \overline{\mathrm{Pic}}_{X/S}^d$, the relative dualizing sheaf $\omega = f_d^!(\mathscr{O}_{\mathscr{M}})[-1]$ is a coherent $\mathscr{O}_{\mathscr{X}}$-module, and restricts to the absolute dualizing sheaf $\omega_{X_s}$ on each fiber $X_s$ (\cite[\href{https://stacks.math.columbia.edu/tag/0BZZ}{Tag~0BZZ}, \href{https://stacks.math.columbia.edu/tag/0AWT}{Tag~0AWT}]{stacks-project}). Since $\mathscr{J}|_{X_s}$ is a maximal Cohen--Macaulay sheaf of rank $1$, \cite[Theorem~3.3.10]{BH} yields $\sExt_{X_s}^j(\mathscr{J}|_{X_s}, \omega_{X_s}) = 0$ for $j \neq 0$, and $\sExt_{X_s}^0(\mathscr{J}|_{X_s}, \omega_{X_s})$ is a rank $1$, torsion-free sheaf of degree $2p_a-2-d$. Moreover, the natural map $\mathscr{J}|_{X_s} \to \sExt_{X_s}^0(\sExt_{X_s}^0(\mathscr{J}|_{X_s}, \omega_{X_s}), \omega_{X_s})$ is an isomorphism. 
By \cite[Thm.~1.10]{AK80}, $\mathbb{D}_\omega(\mathscr{J}_d) \simeq \sExt_{\mathscr{X}}^0(\mathscr{J}_d, \omega)$ is a $\mathscr{M}$-flat, coherent $\mathscr{O}_\mathscr{X}$-module, compatible with base change. Applying the same argument with $\mathscr{J}_d$ replaced by $\mathbb{D}_\omega(\mathscr{J}_d)$, the natural map $\mathscr{J}_d \to \mathbb{D}_\omega(\mathbb{D}_\omega(\mathscr{J}_d))$ is a homomorphism of $\mathscr{M}$-flat coherent sheaves, compatible with base change and restricting to an isomorphism on each fiber. Thus, this map is an isomorphism.

Thus, for each $d$, there is a unique morphism
\[
\iota\colon \overline{\mathrm{Pic}}_{X/S}^d \to \overline{\mathrm{Pic}}_{X/S}^{2p_a-2-d}
\]
such that $\iota^*\mathscr{J}_{2p_a-2-d} \simeq \mathbb{D}_\omega(\mathscr{J}_d)$. The isomorphism $\mathscr{J}_d \xrightarrow{\simeq} \mathbb{D}_\omega(\mathbb{D}_\omega(\mathscr{J}_d))$ shows that $\iota$ defines an involutive automorphism of the whole compactified Picard scheme $\overline{\mathrm{Pic}}_{X/S} = \coprod_{d \in \mathbb{Z}} \overline{\mathrm{Pic}}_{X/S}^d$ over $S$, interchanging components of degree $d$ and $2p_a-2-d$.

By Proposition~\ref{prop:coh:duality}, together with the base-change properties of Proposition~\ref{thm:Q}~\eqref{thm:Q-2} and the compatibility of $\omega$ with base change, we have canonical isomorphisms
\[
\mathcal{Q}_{f_d}(\mathscr{O}_{\mathscr{X}}, \mathscr{J}_d) \simeq \mathcal{Q}_{f_d}(\mathbb{D}(\mathscr{J}_d), \omega) \simeq \iota^* \left( \mathcal{Q}_{f_{2p_a-2-d}}(\mathscr{J}_{2p_a-2-d}, \omega) \right).
\]
Since the formation of $\mathbf{PHom}_f$ commutes with base change (Lemma~\ref{lem:PExt}), the map $\iota$ induces an isomorphism
\[
\iota^*\colon \mathbf{Quot}_{\omega/X/S}^d \xrightarrow{\sim} \mathbf{LinSys}_{X/S}^{\mathscr{J}_d}
\]
which fits into the following commutative diagram:
\begin{equation*}
\begin{tikzcd}[row sep=large, column sep=large]
    \mathbf{Quot}_{\omega/X/S}^d \arrow[r, "\iota^*", "\sim"'] \arrow[d, swap, "\mathbf{A}_\omega^{2p_a - 2 - d}"] & \mathbf{LinSys}_{X/S}^{\mathscr{J}_d} \arrow[d, "\mathbf{A}^{\mathscr{J}_d}"] \\
    \overline{\mathrm{Pic}}_{X/S}^{2p_a - 2 - d} \arrow[r, "\iota", "\sim"'] & \overline{\mathrm{Pic}}_{X/S}^d
\end{tikzcd}
\end{equation*}
We define the (modified) Abel map in this context by the composition
\[
\mathbf{A}^d := \iota \circ \mathbf{A}_\omega^{2p_a - 2 - d} \simeq \mathbf{A}^{\mathscr{J}_d} \circ \iota^* \colon \mathbf{Quot}_{\omega/X/S}^d \to \overline{\mathrm{Pic}}_{X/S}^d.
\]
The classical truncations of the fibers of $\mathbf{A}^d$ and $\mathbf{A}_\omega^d$ are again the projective spaces \eqref {eqn:fibers:abel}.

The main properties of the Abel maps in this context are summarized as follows:

\begin{theorem}
\label{thm:SOD:curve}
In the above setting:
\begin{enumerate}
    \item For any $d \in \mathbb{Z}$, the Abel maps $\mathbf{A}^d$ and $\mathbf{A}_\omega^d$ are quasi-smooth of relative virtual dimensions $d-p_a$ and $p_a - 2 - d$, respectively. 
    \item For each $d$, the classical truncation of $\mathbf{Quot}_{\omega/X/S}^d$ is the classical Quot scheme $\mathrm{Quot}_{\omega/X/S}^d$ of \cite[(8.2)]{AK80}, and the classical truncations of $\mathbf{A}^d$ and $\mathbf{A}_\omega^d$ recover the classical Abel maps $A^d$ and $A_\omega^d$ described in the introduction\footnote{Note that our $A_\omega^d$ corresponds to $\mathscr{A}_\omega^{2p_a-2-d}$ in \cite[(8.2)]{AK80}, while $A^d$ corresponds to $\iota \circ \mathscr{A}_\omega^d$.}.
    \item The derived scheme $\mathbf{Quot}_{\omega/X/S}^d$ is empty for $d < 0$.
    \item The Abel map $\mathbf{A}^d$ is surjective if and only if $d \ge p_a$. In this case, for any type of derived category $\mathrm{D} \in \{\Dqc, \mathrm{D}^-_{\mathrm{coh}}, \mathrm{D}^\mathrm{b}_{\mathrm{coh}}, \mathrm{Perf}\}$, there is a semiorthogonal decomposition:
    \[
    \mathrm{D}(\mathbf{Quot}_{\omega/X/S}^d) = \left\langle \mathrm{D}(\mathbf{Quot}_{\omega/X/S}^{2p_a-2-d}),\ \text{$(1-p_a+d)$ copies of } \mathrm{D}(\overline{\mathrm{Pic}}_{X/S}^d) \right\rangle.
    \]
    \item If $d = p_a - 1$, then there is a ``flopping" derived equivalence
    \[
    \mathrm{D}(\mathbf{Quot}_{\omega/X/S}^d) \simeq \mathrm{D}(\mathbf{Quot}_{\omega/X/S}^{2p_a-2-d}).
    \]
    \item The Abel map $\mathbf{A}^d$ is smooth if and only if $d \ge 2p_a - 1$. In this case, $\mathbf{Quot}_{\omega/X/S}^d = \mathrm{Quot}_{\omega/X/S}^d$ is classical, $\mathbf{A}^d$ is a projective bundle with fiber $\mathbb{P}^{d-p_a}$, and $\mathbf{Quot}_{\omega/X/S}^{2p_a-2-d} = \emptyset$, so the decomposition in (4) reduces to Orlov's projective bundle formula \cite{Orlov92}:
    \[
    \mathrm{D}(\mathrm{Quot}_{\omega/X/S}^d) = \left\langle \text{$(1-p_a+d)$ copies of } \mathrm{D}(\overline{\mathrm{Pic}}_{X/S}^d) \right\rangle.
    \]
\end{enumerate}
\end{theorem}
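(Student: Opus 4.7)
I would apply the general framework of Theorem~\ref{thm:SOD:abel:LinSys} to $\mathscr{M} = \overline{\mathrm{Pic}}_{X/S}^d$ with universal sheaf $\mathscr{J} = \mathscr{J}_d$, and verify the remaining classical-truncation assertions directly. The first task is to show that $\mathcal{Q}_{f_d}(\mathscr{O}_{\mathscr{X}}, \mathscr{J}_d)$ is perfect of Tor-amplitude $[0,1]$ with constant rank $r = 1-p_a+d$: this follows from Corollary~\ref{cor:Ext:fiberwise}\eqref{cor:Ext:fiberwise-2}, since on each fiber $\mathscr{J}_d|_{X_s}$ is a coherent sheaf on the integral curve $X_s$ with cohomology concentrated in degrees $[0,1]$ and Euler characteristic $1-p_a+d$ by Riemann--Roch. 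Combined with the identifications $\mathbf{Quot}_{\omega/X/S}^d \simeq \mathbf{LinSys}_{X/S}^{\mathscr{J}_d}$ via $\iota^*$ and $\mathbf{Quot}_{\omega/X/S}^{2p_a-2-d} = \mathbf{Quot}_{\omega/X/S}^{\mathscr{J}_d}$ from \S\ref{subsec:Abel:cPic}, Theorem~\ref{thm:SOD:abel:LinSys} immediately yields the quasi-smoothness and virtual dimensions of Part~(1), the semiorthogonal decomposition in Part~(4), and the flopping derived equivalence of Part~(5) at $d = p_a - 1$.

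Next I would verify Parts~(2) and~(3) by a $T$-point comparison. For classical $T \to \overline{\mathrm{Pic}}_{X/S}^d$, Lemma~\ref{lem:PExt:T-points} describes a $T$-point of $\mathbf{Quot}_{\omega/X/S}^{2p_a-2-d} = \mathbf{PHom}_{f_d}(\mathscr{J}_d, \omega)$ as a pair $(v, \mathscr{L})$ with $\mathscr{L} \in \mathrm{Pic}(T)$ and $v \colon \mathscr{J}_{d,T} \otimes f_T^*\mathscr{L}^\vee \to \omega_T$ fiberwise nonzero. Since any fiberwise nonzero map between rank-one torsion-free sheaves on an integral curve is injective, the cokernel is $T$-flat of fiberwise length $2p_a-2-d$, yielding a classical Quot point; conversely, a classical quotient has kernel of the form $\mathscr{J}_{d,T} \otimes f_T^*\mathscr{L}^\vee$ for a unique $\mathscr{L}$ by the universal property of $\overline{\mathrm{Pic}}_{X/S}^d$. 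This identifies the classical truncations and matches the Abel maps; Part~(3) is then immediate since $\mathrm{Quot}_{\omega/X/S}^d$ is empty for $d < 0$.

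Finally, for the surjectivity half of Part~(4), $d \ge p_a$ gives $\chi(\mathscr{J}_d|_{X_s}) \ge 1$, so every classical fiber is nonempty, whereas for $d < p_a$ the density of $\mathrm{Pic}_{X/S}^d$ in $\overline{\mathrm{Pic}}_{X/S}^d$ together with standard Brill--Noether theory produces line bundles with $h^0 = 0$. For Part~(6), Serre duality on the Cohen--Macaulay curve gives $H^1(X_s, I)^\vee \simeq H^0(X_s, \mathbb{D}_\omega(I))$, with $\mathbb{D}_\omega(I)$ a rank-one torsion-free sheaf of degree $2p_a-2-d$; such a sheaf has no nonzero global sections iff its degree is negative, so $H^1(X_s, I) = 0$ for every $I$ iff $d \ge 2p_a - 1$. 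By Corollary~\ref{cor:Ext:fiberwise}\eqref{cor:Ext:fiberwise-2} and Lemma~\ref{lem:PExt}~(5), this is equivalent to $\mathcal{Q}_{f_d}(\mathscr{O}_{\mathscr{X}}, \mathscr{J}_d)$ being a vector bundle and hence to smoothness of $\mathbf{A}^d$; then $\mathbf{A}^d$ becomes the projective bundle $\mathbf{P}(\mathcal{Q}_{f_d}(\mathscr{O}_{\mathscr{X}}, \mathscr{J}_d))$, $\mathbf{Quot}_{\omega/X/S}^{2p_a-2-d} = \emptyset$ by Part~(3), and the decomposition of Part~(4) collapses to Orlov's formula. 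The main obstacle is the classical-truncation analysis of Part~(2): the universal property of $\mathbf{PHom}$ classifies morphisms only up to twist by line bundles on $T$, so one must carefully use the rigidification by $x \colon S \to X$ and the biduality of rank-one torsion-free sheaves to reconcile the derived and classical Abel maps across the duality $\iota$.
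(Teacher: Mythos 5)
Your proposal is correct in outline and structurally matches the paper's proof for Parts (1), (4), and (5): the paper likewise verifies that $\mathcal{Q}_{f_d}(\mathscr{O}_{\mathscr{X}}, \mathscr{J}_d)$ has Tor-amplitude $[0,1]$ of rank $1-p_a+d$ and then applies Theorem~\ref{thm:SOD:PExt} (via Lemma~\ref{lem:PExt}(5)). The genuine divergence is in Part~(2). The paper does not argue by $T$-point comparison; instead it identifies $\pi_0\mathcal{Q}_{f}(\mathscr{J},\omega)$ with the Altman--Kleiman $H$-module via Remark~\ref{rmk:H(I,F)}, invokes \cite[Proposition 4.21]{J23} to conclude that the classical truncation of the derived projectivization $\mathbf{P}(\mathcal{Q}_f(\mathscr{J},\omega))$ is the classical $\mathbb{P}(H(\mathscr{J},\omega))$, and then matches this against \cite[Lemma (5.17)]{AK80}, which already identifies $\mathbb{P}(H(\mathscr{J},\omega))$ with the classical Quot scheme. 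This route outsources precisely the flatness/injectivity/biduality and rigidification bookkeeping you flag as the ``main obstacle,'' and it keeps the comparison with the classical Abel map at the level of functors of points already established by Altman--Kleiman. Your direct $T$-point argument can be made to work, but note that (i) the line bundle $\mathscr{L}$ on $T$ is recovered from the rigidification $u \colon \mathscr{O}_T \simeq x_T^*\mathscr{I}$, not merely ``by the universal property,'' and (ii) you must compose with the duality involution $\iota$ consistently for $\mathbf{A}^d$ versus $\mathbf{A}_\omega^d$ — exactly the two points you yourself identify at the end.

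Two smaller points. For the surjectivity and smoothness ``only if'' directions in (4) and (6), the paper cites \cite[Theorem 8.4(ii), (v)]{AK80} rather than reproving them; your appeal to ``standard Brill--Noether theory'' and the density of $\mathrm{Pic}^d$ is more delicate than it looks on singular fibers (one needs irreducibility/dimension input for $\overline{\mathrm{Pic}}^d(X_s)$ and its Abel image), so a citation is safer. Finally, the sentence ``such a sheaf has no nonzero global sections iff its degree is negative'' is not a biconditional for a single sheaf (nontrivial degree-$0$ line bundles have $h^0 = 0$); what you need, and what your subsequent argument actually uses, is the universally quantified version: $h^0(J)=0$ for \emph{all} rank-one torsion-free $J$ of degree $m$ iff $m < 0$ (one implication from degree considerations, the other by exhibiting $\mathscr{O}(m \cdot x_s)$).
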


\begin{proof}
Part (1) follows from Lemma~\ref{lem:PExt}.(5).

For (2), it suffices to consider the Abel map $\mathbf{A}_\omega^d$, as $\mathbf{A}^d$ is obtained by composing with the involutive isomorphism $\iota$. By Remark~\ref{rmk:H(I,F)}, the truncation $\pi_0\mathcal{Q}_f(\mathscr{J},\omega)$ coincides with Altman--Kleiman's module $H(\mathscr{J}, \omega)$ \cite[(1.1)]{AK80}. Thus, by \cite[Proposition 4.21]{J23}, the classical truncation of the derived projectivization $\mathbb{P}(\mathcal{Q}_f(\mathscr{J}, \omega))$ is isomorphic to the classical projectivization of $H(\mathscr{J}, \omega)$ over $\mathscr{M}$, and the result follows by comparing Definition~\ref{defn:abel:LinSys} with \cite[Lemma (5.17)]{AK80}.

Part (3) follows from the vanishing $H^0(X_s, \mathscr{J}|_{X_s}) = 0$ for $\deg(\mathscr{J}|_{X_s}) < 0$ on each fiber $X_s$ (see \cite[(3.5)(iii.f)]{AK80}), part (2), and the description of Abel map fibers \eqref{eqn:fibers:abel}.

For (4), surjectivity follows from part (2) and \cite[Theorem 8.4(ii)]{AK80}. The remainder of (4), as well as (5), follow from Theorem~\ref{thm:SOD:PExt}.

For (6), the “only if” direction follows from \cite[Theorem 8.4(v)]{AK80}. If $d \ge 2p_a - 2$, then $H^1(X_s, \mathscr{J}|_{X_s}) = 0$ on all fibers (see \cite[(3.5)(iii.g)]{AK80}), so by Corollary~\ref{cor:Ext:fiberwise}, $\mathcal{Q}_f(\mathscr{O}_{\mathscr{X}}, \mathscr{J})$ is a vector bundle of rank $1-p_a+d$ over $\mathscr{M} = \overline{\mathrm{Pic}}_{X/S}^d$. The result follows.
\end{proof}

\begin{remark}
The derived equivalence in (5) is of ``flopping'' type. For example, if $X/S$ is a smooth projective curve $C$ over a field, \cite[Theorem~4.1]{JL18} shows that this equivalence arises from the $\Theta$-flop, and that composing the flopping equivalences yields a highly nontrivial autoequivalence -- namely, a spherical twist along the $\Theta$-divisor in the sense of \cite{ST, AL}.

More generally, the diagram
\[
\mathbf{Quot}_{\omega/X/S}^{p_a - 1}
\xrightarrow{\mathbf{A}^{p_a-1}}
\overline{\mathrm{Pic}}_{X/S}^{p_a-1}
\xleftarrow{\mathbf{A}^{p_a-1}_{\omega}}
\mathbf{Quot}_{\omega/X/S}^{p_a - 1}
\]
should be viewed as a derived analogue of the $\Theta$-flop, with the derived equivalence in (5) providing a confirmation of the DK conjecture in this context. We expect that a similar ``flop--flop = twist'' phenomenon, as in \cite[Corollary~4.4]{JL18}, holds in this more general setting.
\end{remark}

\subsubsection{Gorenstein case}
\label{subsec:Abel:Gorenstein}
Suppose all fibers (or equivalently, all geometric fibers) $X_s$ of $X/S$ are Gorenstein. Then the relative dualizing sheaf $\omega$ is invertible (\cite[\href{https://stacks.math.columbia.edu/tag/0C08}{Tag~0C08}]{stacks-project}). For any $d \in \mathbb{Z}$, define the derived Hilbert scheme and its unmodified Abel map by (see Definition \ref{defn:abel:LinSys}):
\[
\widetilde{\mathbf{A}}_{-d} \colon \mathbf{Hilb}_{X/S}^d := \mathbf{PHom}_{f_{-d}}(\mathscr{J}_{-d}, \mathscr{O}_{\mathscr{X}}) \to \overline{\mathrm{Pic}}_{X/S}^{~-d}.
\]
Tensoring with $\omega$ yields a Cartesian diagram in which the horizontal arrows are isomorphisms:
\begin{equation*}
\begin{tikzcd}[row sep=large, column sep=large]
    \mathbf{Hilb}_{X/S}^d \arrow[r, "\otimes \omega", "\sim"'] \arrow[d, swap, "\widetilde{\mathbf{A}}_{-d}"] &  \mathbf{Quot}_{\omega/X/S}^d  \arrow[d, "\mathbf{A}_{\omega}^{2p_a -2 -d}"] \\
    \overline{\mathrm{Pic}}_{X/S}^{~-d} \arrow[r, "\otimes \omega", "\sim"'] & \overline{\mathrm{Pic}}_{X/S}^{2p_a-2-d}
\end{tikzcd}
\end{equation*}

Composing with the isomorphism $\otimes \omega$, we obtain Abel maps in this Gorenstein setting:
\[
\mathbf{A}_d := \mathbf{A}^d \circ (\otimes \omega)\colon \mathbf{Hilb}_{X/S}^d \to \overline{\mathrm{Pic}}_{X/S}^d, \qquad
\mathbf{A}^{\omega}_d := \mathbf{A}_{\omega}^d \circ (\otimes \omega) \colon \mathbf{Hilb}_{X/S}^{2p_a-2-d} \to \overline{\mathrm{Pic}}_{X/S}^{d}.
\]
Here, the positions of the upper and lower indices of Abel maps are switched to distinguish from the Quot scheme case. By construction, these Abel maps are identified with the derived projectivizations of the complexes $\mathcal{Q}_{f_d}(\mathscr{O}_{\mathscr{X}}, \mathscr{J}_d)$ and $\mathcal{Q}_{f_d}(\mathscr{J}_d, \omega)$ over $\overline{\mathrm{Pic}}_{X/S}^{d}$, respectively.

For a point $[J] \in \overline{\mathrm{Pic}}^d(X_s)$, the classical truncations of the fibers of $\mathbf{A}_d$ and $\mathbf{A}_d^\omega$ over $[J]$ are the projective spaces $\mathbb{P}(H^0(X_s, J)^\vee)$ and $\mathbb{P}(H^0(X_s, \sExt^0_{X_s}(J, \omega_{X_s}))^\vee)$, corresponding to the complete linear systems $|J|$ and $|\sExt^0(J, \omega_{X_s})|$ of \emph{generalized divisors} for the rank $1$, torsion-free sheaves $J$ and $\sExt^0_{X_s}(J, \omega_{X_s})$ in the sense of Hartshorne \cite{Har86, Har94}.

\begin{corollary}
\label{cor:Abel:Gorenstein}
In the above situation, for any $d$, the classical truncation of $\mathbf{Hilb}_{X/S}^d$ is the classical Hilbert scheme $\mathrm{Hilb}_{X/S}^d$ parametrizing flat families of zero-dimensional subschemes of length $d$ on the fibers of $X/S$. Moreover, all statements of Theorem~\ref{thm:SOD:curve} remain true with all $\mathbf{Quot}_{\omega/X/S}^d$ replaced by $\mathbf{Hilb}_{X/S}^d$, $\mathrm{Quot}_{\omega/X/S}^d$ by $\mathrm{Hilb}_{X/S}^d$, and $\mathbf{A}^d$, $\mathbf{A}_\omega^d$ by $\mathbf{A}_d$, $\mathbf{A}_d^\omega$, respectively.
\end{corollary}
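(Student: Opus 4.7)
The plan is to deduce Corollary~\ref{cor:Abel:Gorenstein} from Theorem~\ref{thm:SOD:curve} by transporting every assertion along the isomorphism $\otimes\,\omega$, which is available because $\omega$ is invertible in the Gorenstein case.

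First, I would upgrade the Cartesian diagram already displayed in~\S\ref{subsec:Abel:Gorenstein} to an honest isomorphism of derived stacks. Since $\omega$ is a line bundle, the functor $-\otimes\omega\colon \Dqc(\mathscr{X})\to \Dqc(\mathscr{X})$ is an autoequivalence preserving relative perfectness, $\mathscr{M}$-flatness, and fiberwise rank-one torsion-freeness, and shifts fiberwise degree by $2p_a-2$. Applying it to $\mathscr{J}_{-d}$ and invoking the universal property of the compactified Picard scheme (as in~\S\ref{subsec:Abel:integral.curves}) produces an isomorphism $\phi\colon \overline{\mathrm{Pic}}^{\,-d}_{X/S}\xrightarrow{\sim}\overline{\mathrm{Pic}}^{\,2p_a-2-d}_{X/S}$ together with $\mathscr{J}_{-d}\otimes\omega\simeq \phi^*\mathscr{J}_{2p_a-2-d}$. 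Combined with the tautological equivalence $\mathbf{PHom}_f(\mathscr{E},\mathscr{F})\simeq \mathbf{PHom}_f(\mathscr{E}\otimes\omega,\mathscr{F}\otimes\omega)$ (immediate from Remark~\ref{rmk:Q:basic}(1)) and the base-change property of Lemma~\ref{lem:PExt}(1), this identifies the top horizontal arrow $\mathbf{Hilb}^d_{X/S}\to\mathbf{Quot}^d_{\omega/X/S}$ as an isomorphism lying over $\phi$. By construction $\mathbf{A}_d$ and $\mathbf{A}^\omega_d$ are then identified with $\mathbf{A}^d$ and $\mathbf{A}^d_\omega$ through this isomorphism.

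Second, for the classical-truncation claim, I would use Theorem~\ref{thm:SOD:curve}(2): the classical truncation of $\mathbf{Quot}^d_{\omega/X/S}$ is the Altman--Kleiman Quot scheme $\mathrm{Quot}^d_{\omega/X/S}$. At the classical level, $-\otimes\omega^{-1}$ sends a surjection $\omega\twoheadrightarrow\mathscr{G}$ with $\mathscr{G}$ flat, rank~$0$, degree~$d$ on fibers to a surjection $\mathscr{O}_{\mathscr{X}}\twoheadrightarrow \mathscr{G}\otimes\omega^{-1}$ of the same numerical type, whose kernel is a flat family of ideal sheaves of zero-dimensional subschemes of length~$d$. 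This yields a canonical isomorphism $\mathrm{Quot}^d_{\omega/X/S}\simeq \mathrm{Hilb}^d_{X/S}$ recovering the classical truncation of $-\otimes\omega$.

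Finally, assertions (1) and (3)--(6) of Theorem~\ref{thm:SOD:curve} concern only intrinsic properties of the morphisms (quasi-smoothness and relative virtual dimension, emptiness, surjectivity, smoothness, and the projective-bundle reduction at $d\ge 2p_a-1$) and of their derived categories (the semiorthogonal decomposition for $d\ge p_a$ and the flopping equivalence at $d=p_a-1$), all preserved under equivalences of derived stacks. They therefore transfer verbatim under $\otimes\omega$, replacing each $\mathbf{Quot}$ (respectively $\mathrm{Quot}$, $\mathbf{A}^{(\cdot)}$) by $\mathbf{Hilb}$ (respectively $\mathrm{Hilb}$, $\mathbf{A}_{(\cdot)}$). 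The only bookkeeping is tracking the degree shift $-d\leftrightarrow 2p_a-2-d$ induced by $\phi$, which is built into the definitions $\mathbf{A}_d=\mathbf{A}^d\circ(\otimes\omega)$ and $\mathbf{A}^\omega_d=\mathbf{A}^d_\omega\circ(\otimes\omega)$. I expect no substantive obstacle beyond this routine check, since all nontrivial content has been absorbed into Theorem~\ref{thm:SOD:curve}; the only genuinely new input is the universal-property argument constructing $\phi$ and the compatibility $\mathscr{J}_{-d}\otimes\omega\simeq \phi^*\mathscr{J}_{2p_a-2-d}$.
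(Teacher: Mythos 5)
Your proposal is correct and follows essentially the same route the paper implicitly takes: the paper proves the corollary by the Cartesian diagram in \S\ref{subsec:Abel:Gorenstein} whose horizontal arrows are the isomorphisms $\otimes\,\omega$, and your argument just makes that transport explicit (via Remark~\ref{rmk:Q:basic}(1), Lemma~\ref{lem:PExt}(1), and the universal property of $\overline{\mathrm{Pic}}$), then invokes Theorem~\ref{thm:SOD:curve}(2) plus the classical identification $\mathrm{Quot}^d_{\omega/X/S}\simeq\mathrm{Hilb}^d_{X/S}$ for the truncation claim. One small remark: the rigidification of $\overline{\mathrm{Pic}}$ means $\mathscr{J}_{-d}\otimes\omega$ agrees with $\phi^*\mathscr{J}_{2p_a-2-d}$ only after twisting by a line bundle pulled back from the base, but this is harmless since $\mathbf{PHom}_f$ (being a derived projectivization of a $\mathcal{Q}$-complex) is insensitive to such twists, so your identification of the top horizontal arrow still holds.
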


The derived Hilbert schemes $\mathbf{Hilb}_{X/S}^d$ are usually not classical. 
However, in a notable special case,  the derived and classical Hilbert schemes coincide: 

\subsubsection{Locally planar case}
\label{subsec:Abel:planar}

We say that $X/S$ is a family of \emph{locally planar curves} if, locally, $X/S$ admits a closed embedding into a smooth quasi-projective family of surfaces $Y$ over $S$. In particular, every fiber of $X/S$ is Gorenstein.

\begin{lemma}[Locally planar curves] 
\label{lem:planar:classical}
If $X/S$ is a family of locally planar curves, then the derived Hilbert schemes are classical: $\mathbf{Hilb}_{X/S}^d = \mathrm{Hilb}_{X/S}^d$ for all $d \ge 0$.
\end{lemma}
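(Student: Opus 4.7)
The plan is to prove $\mathbf{Hilb}_{X/S}^d = \mathrm{Hilb}_{X/S}^d$ by presenting $\mathbf{Hilb}_{X/S}^d$ locally as a derived zero locus on a smooth projective bundle over $\overline{\mathrm{Pic}}_{X/S}^{-d}$ and invoking the regular-section (Koszul) criterion via a dimension count of the classical truncation.

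First I would unpack the local structure. By construction, $\mathbf{Hilb}_{X/S}^d$ is the derived projectivization over $\overline{\mathrm{Pic}}_{X/S}^{-d}$ of the complex $\mathcal{Q} := \mathcal{Q}_{f_{-d}}(\mathscr{J}_{-d}, \mathscr{O}_{\mathscr{X}})$, which has Tor-amplitude $[0,1]$ and rank $r = 1 - p_a + d$ by Riemann--Roch (on each geometric fiber $X_s$, the dual $\mathcal{Q}^\vee$ restricts to $(\mathscr{J}_{-d}|_{X_s})^\vee$, a rank-one torsion-free sheaf of degree $d$, of Euler characteristic $d + 1 - p_a$). Locally on $\overline{\mathrm{Pic}}_{X/S}^{-d}$, I would present $\mathcal{Q} \simeq [V \xrightarrow{\alpha} W]$ by vector bundles $V, W$ of ranks $v$ and $w = v + r$. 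Under this presentation, $\mathbf{P}([V \to W])$ is identified with the derived zero locus $\mathbf{Z}(\sigma)$ of the section $\sigma$ of the rank-$v$ bundle $V^\vee \otimes \mathscr{O}_{\mathbb{P}(W)}(1)$ on the smooth projective bundle $\mathbb{P}(W) \to \overline{\mathrm{Pic}}_{X/S}^{-d}$, where $\sigma$ is obtained by composing $V \xrightarrow{\alpha} W$ with the universal quotient $W \twoheadrightarrow \mathscr{O}_{\mathbb{P}(W)}(1)$.

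By the Koszul regularity criterion, $\mathbf{Z}(\sigma)$ coincides with its classical truncation if and only if $\sigma$ is a regular section, equivalently if and only if the classical zero locus $Z(\sigma) \simeq \mathrm{Hilb}_{X/S}^d$ (using Theorem~\ref{thm:SOD:curve}(2)) has codimension exactly $v$ in $\mathbb{P}(W)$. A direct computation gives $\dim \mathbb{P}(W) = \dim S + p_a + w - 1$, using that $\overline{\mathrm{Pic}}_{X/S}^{-d}$ is of pure relative dimension $p_a$ over $S$ for integral curves; the expected codimension then amounts to $\mathrm{Hilb}_{X/S}^d$ being of pure relative dimension $d$ over $S$. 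I would then invoke the classical fact that, for an integral curve $X_s$ with locally planar singularities, $\mathrm{Hilb}^d(X_s)$ is reduced and equidimensional of dimension $d$ for every $d \ge 0$, following from the combined work of Altman--Iarrobino--Kleiman on compactified Jacobians and Brian\c{c}on's analysis of punctual Hilbert schemes at planar singularities. With this bound, $\sigma$ is regular, every local derived zero locus collapses to the classical one, and the identity $\mathbf{Hilb}_{X/S}^d = \mathrm{Hilb}_{X/S}^d$ follows.

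The principal obstacle is not the derived-geometric formalism but the classical singularity-theoretic input: ensuring $\dim \mathrm{Hilb}^d(X_s) = d$ uniformly over all locally planar integral fibers. Once this dimension bound is in hand, the derived machinery (the regular-section criterion applied to local $[0,1]$-presentations of the $\mathcal{Q}$-complex) reduces the lemma to a mechanical codimension count.
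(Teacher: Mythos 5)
Your local Koszul presentation of $\mathbf{Hilb}_{X/S}^d$ as a derived zero locus $\mathbf{Z}(\sigma)\subseteq\mathbb{P}(W)$, and the equivalence ``$\mathbf{Z}(\sigma)$ is classical iff $\sigma$ is a regular section,'' are both correct, and your dimension arithmetic is set up correctly. However, the central step ``$\sigma$ is regular iff $Z(\sigma)$ has codimension exactly $v$ in $\mathbb{P}(W)$'' is false without knowing the ambient space is Cohen--Macaulay: in a non-CM ring one can have $\operatorname{depth}(I)<\operatorname{ht}(I)$, so a codimension-$v$ zero locus of a rank-$v$ bundle need not come from a regular section. Since $\mathbb{P}(W)$ is smooth over $\overline{\mathrm{Pic}}_{X/S}^{-d}$, this is equivalent to knowing that $\overline{\mathrm{Pic}}_{X/S}^{-d}$ is Cohen--Macaulay -- a nontrivial geometric fact that genuinely requires the locally planar hypothesis and is not addressed in your argument. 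Relatedly, you write that $\overline{\mathrm{Pic}}_{X/S}^{-d}$ has pure relative dimension $p_a$ ``for integral curves,'' as if this were automatic; it is not. For integral curves with non-planar singularities the compactified Jacobian can be reducible with components of dimension exceeding $p_a$ (by results of Rego, Kleiman--Kleppe, et al.). Both the dimension claim and the CM-ness you implicitly rely on are themselves part of the locally planar package (e.g.\ via [AIK77]) and need to be stated and cited, not assumed.

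The paper takes a different route that sidesteps all Cohen--Macaulay issues. It first shows $\overline{\mathrm{Pic}}_{X/S}^{d}\to S$ is a flat lci morphism of relative dimension $p_a$ -- proved by an elegant trick: tensoring with a high-degree line bundle makes the Abel map a smooth projective bundle, and $\mathrm{Hilb}_{X/S}^{d}\to S$ is already flat lci of relative dimension $d$ for locally planar families by [AIK77]. Combined with quasi-smoothness of the Abel map (Theorem~\ref{thm:SOD:curve}(1)), this yields that $\mathbf{Hilb}_{X/S}^d\to S$ is quasi-smooth of virtual relative dimension $d$. The paper then invokes a self-contained lemma: a quasi-smooth morphism $Z\to S$ of virtual relative dimension $d$, with $S$ classical and $Z_{\mathrm{cl}}$ flat over $S$ of relative dimension $d$, has $Z = Z_{\mathrm{cl}}$. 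Its proof localizes $Z$ in a \emph{smooth} relative affine space $\mathbb{A}^n_S$, passes to fibers, and checks regularity in the smooth (hence automatically Cohen--Macaulay) ring $A_{\mathfrak{p}}/\mathfrak{q}A_{\mathfrak{p}}$. This is precisely the move your argument is missing: localizing in a smooth ambient over $S$ rather than in $\mathbb{P}(W)$ (which is only smooth over the compactified Picard scheme) makes the CM hypothesis come for free, turning the codimension count into a complete proof.

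To repair your approach, you would either need to separately establish Cohen--Macaulayness of $\overline{\mathrm{Pic}}_{X/S}^{-d}$ (at which point you are re-deriving what the paper proves, and might as well use the paper's cleaner lemma), or re-embed your zero locus in a smooth relative $S$-scheme before running the codimension argument fiberwise -- which is structurally what the paper's auxiliary lemma does.
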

\begin{proof}
By \cite[Theorem (6)]{AIK77}, $\mathrm{Hilb}_{X/S}^d \to S$ is a flat, locally complete intersection morphism of relative dimension $d$ (see also \cite{Reg80, BGS81}). 
Using the isomorphism $\otimes \mathscr{O}_{X/S}(m \cdot x): \overline{\mathrm{Pic}}_{X/S}^d \xrightarrow{\sim} \overline{\mathrm{Pic}}_{X/S}^{d+m}$ for any $m \in \mathbb{Z}$, and the fact that $A^{d+m}$ is smooth for $m \gg 0$ (Theorem~\ref{thm:SOD:curve}(6)), it follows that $\overline{\mathrm{Pic}}_{X/S}^d \to S$ is a flat, locally complete intersection morphism of relative dimension $p_a$ for all $d$. Together with Theorem~\ref{thm:SOD:curve}(1), this shows that $\mathbf{Hilb}_{X/S}^d \to S$ is quasi-smooth of virtual relative dimension $d$. The claim now follows from the following standard lemma.
\end{proof}

\begin{lemma}
Let $Z \to S$ be a quasi-smooth map of derived schemes of virtual relative dimension $d$. If $S$ is classical and $Z_{\mathrm{cl}}$ is flat over $S$ of relative dimension $d$, then $Z = Z_{\mathrm{cl}}$.
\end{lemma}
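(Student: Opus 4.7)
The argument is local on $Z$ and $S$, so the plan is first to reduce to the affine case: assume $S = \Spec(R)$ with $R$ discrete and $Z = \Spec(A)$ for a simplicial commutative $R$-algebra $A$ quasi-smooth of virtual relative dimension $d$. The goal becomes to prove $\pi_i A = 0$ for all $i > 0$, from which $Z = Z_{\mathrm{cl}}$ follows.

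The next step would be to use the standard local structure of quasi-smooth morphisms: Zariski-locally on $Z$ one can present $A$ as a derived Koszul algebra, i.e., there exist integers $n, k$ with $n - k = d$ and elements $f_1, \ldots, f_k \in R[x_1, \ldots, x_n]$ such that
\[
A \simeq R[x_1, \ldots, x_n] \otimes_{R[y_1, \ldots, y_k]} R,
\]
where the two structure maps send $y_i \mapsto f_i$ and $y_i \mapsto 0$ respectively. Then $\pi_0 A = R[x_1, \ldots, x_n]/(f_1, \ldots, f_k)$, which by hypothesis is $R$-flat of relative dimension $d = n-k$.

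Fiberwise discreteness would then follow from a Cohen--Macaulay dimension count: for each $s \in \Spec(R)$, the classical fiber $\kappa(s)[x]/(\bar f_1, \ldots, \bar f_k)$ has dimension $d = n - k$, so $(\bar f_1, \ldots, \bar f_k)$ cuts out codimension exactly $k$ in the Cohen--Macaulay polynomial ring $\kappa(s)[x_1, \ldots, x_n]$ and is therefore a regular sequence. Consequently, the Koszul complex on $(\bar f)$ over $\kappa(s)[x]$, which computes $A \otimes_R \kappa(s)$, is acyclic in positive degrees, so $A \otimes_R \kappa(s)$ is discrete.

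To promote this to discreteness of $A$ itself, I would argue by contradiction: suppose $\pi_i A \neq 0$ for some minimal $i > 0$ and consider the Tor spectral sequence
\[
E^2_{p,q} = \Tor_p^R(\pi_q A, \kappa(s)) \Rightarrow \pi_{p+q}(A \otimes_R \kappa(s)).
\]
Flatness of $\pi_0 A$ kills $E^2_{p,0}$ for $p > 0$, and minimality of $i$ kills $E^2_{p,q}$ for $0 < q < i$. No differentials can enter or leave $E^2_{0,i}$, so the edge term $\pi_i A \otimes_R \kappa(s) = E^\infty_{0,i}$ contributes isomorphically to $\pi_i(A \otimes_R \kappa(s))$, which vanishes by the previous step. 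Since quasi-smoothness implies $A$ is almost of finite presentation over $R$, each $\pi_i A$ is a finitely generated $\pi_0 A$-module; applying Nakayama at any prime $\mathfrak{p}$ of $\pi_0 A$ in the support of $\pi_i A$ (lying over some $\mathfrak{q} \in \Spec(R)$ with $\mathfrak{q} \subseteq \mathfrak{p}$) forces $(\pi_i A)_\mathfrak{p} = 0$, a contradiction. I expect the main technical wrinkle to be installing the local Koszul model with the correct virtual dimension (ensuring $k = n - d$ exactly); once that is in place, the Cohen--Macaulay computation on fibers and the Nakayama promotion are clean and essentially standard.
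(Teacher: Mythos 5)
Your proof is correct, and it starts from the same place as the paper's: reduce to the affine case, present $Z$ locally as a derived Koszul zero locus of $f_1,\ldots,f_k$ in $R[x_1,\ldots,x_n]$ with $n-k=d$, and use the Cohen--Macaulay dimension count on each closed fiber $\kappa(s)[x]/(\bar f)$ to show that $(\bar f_1,\ldots,\bar f_k)$ is a regular sequence there. Where the two arguments diverge is in how they upgrade this fiberwise information to a global conclusion. The paper stops at the regular-sequence statement on fibers and then invokes the classical flatness-and-regular-sequence lifting lemma, \cite[\href{https://stacks.math.columbia.edu/tag/0470}{Tag 0470}]{stacks-project}, to conclude directly that $f_1,\ldots,f_k$ is a regular sequence in $A_{\mathfrak p}$, so the derived Koszul complex is already discrete. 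You instead first record that each derived fiber $A \otimes_R \kappa(s)$ is discrete (the Koszul complex on a regular sequence is a resolution), and then deploy the Tor spectral sequence $E^2_{p,q}=\Tor_p^R(\pi_q A,\kappa(s))\Rightarrow \pi_{p+q}(A\otimes_R\kappa(s))$ together with flatness of $\pi_0 A$, minimality of $i$, finite generation of $\pi_i A$ over $\pi_0 A$ (from almost finite presentation), and Nakayama to kill $\pi_i A$. Both arguments are sound; the paper's is shorter by outsourcing the promotion step to a single commutative-algebra lemma, while yours is more self-contained and more ``derived'' in spirit (fiberwise discreteness plus flatness of $\pi_0$ implies discreteness), at the cost of the spectral-sequence bookkeeping and the localization bookkeeping relating primes of $\pi_0 A$ to primes of $R$ (your phrase ``$\mathfrak q\subseteq\mathfrak p$'' should really read ``$\mathfrak q$ is the contraction of $\mathfrak p$ to $R$'', but the intent is clear). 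Either would be acceptable in the paper.
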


\begin{proof}
The statement is local on $S$, so we may assume $S = \operatorname{Spec} R$ and $Z$ is the derived zero locus of a sequence $f_1, \ldots, f_m \in A = R[x_1, \ldots, x_n]$, with $d = n - m$ (see \cite[pp.~45]{DAG}). It suffices to show that $f_1, \ldots, f_m$ is a regular sequence at each point $[\mathfrak{p}]\in Z_{\mathrm{cl}} = \operatorname{Spec}(A/(f_1,\ldots, f_m))$.

Let $[\mathfrak{q}] \in \operatorname{Spec} R$ be the image of $[\mathfrak{p}]$. By flatness, the fiber of $Z_{\mathrm{cl}}$ over $[\mathfrak{q}]$ at $[\mathfrak{p}]$ is defined by the images $\overline{f}_1, \ldots, \overline{f}_m$ in $A_{\mathfrak{p}} / \mathfrak{q} A_{\mathfrak{p}}$. Since $A_{\mathfrak{p}} / \mathfrak{q} A_{\mathfrak{p}}$ is a smooth $\kappa(\mathfrak{q})$-algebra of dimension $n$  and the fiber of $Z_{\mathrm{cl}}$ has dimension $n-m$, the sequence $\overline{f}_1, \ldots, \overline{f}_m$ is regular in $A_{\mathfrak{p}} / \mathfrak{q} A_{\mathfrak{p}}$. By \cite[\href{https://stacks.math.columbia.edu/tag/0470}{Tag~0470}]{stacks-project}, it follows that $f_1, \ldots, f_m$ is a regular sequence in $A_{\mathfrak{p}}$. Thus, $Z$ is classical.
\end{proof}

Therefore, in the locally planar case, all statements of Theorem~\ref{thm:SOD:curve} hold for the classical schemes, with $\mathbf{Quot}_{\omega/X/S}^d$ replaced by $\mathrm{Hilb}_{X/S}^d$, and $\mathbf{A}^d$, $\mathbf{A}_\omega^d$ by the classical Abel maps $A_d$ and $A_d^\omega$, respectively. In particular, the semiorthogonal decomposition of Theorem~\ref{thm:SOD:curve}(4)\&(5) specializes as follows: for each $d \ge p_a - 1$, there is a semiorthogonal decomposition:
\[
\mathrm{D}(\Hilb_{X/S}^d) = \left\langle \mathrm{D}(\Hilb_{X/S}^{2p_a-2-d}),\ \text{$(1-p_a+d)$ copies of } \mathrm{D}(\overline{\mathrm{Pic}}_{X/S}^d) \right\rangle.
\]
This greatly extends the known semiorthogonal decompositions for symmetric powers of smooth projective curves over a field $\kappa$ due to Toda~\cite[Corollary~5.11]{Tod2} (see also \cite[Corollary~3.10]{JL18}, \cite{BK19}, and \cite[Example~3.33]{J22a}). 

Moreover, in the case of a smooth projective curve $C$ over $\mathbb{C}$, this formula is optimal: by a result of Lin~\cite{Lin}, the derived category of $\Hilb_{C}^d = \operatorname{Sym}^d(C)$ admits no nontrivial semiorthogonal decompositions for $d \le g(C) - 1$. Motivated by this, we conjecture that in the locally planar case, the derived category $\mathrm{D}(\Hilb_{X/S}^d)$ does not admit any nontrivial $S$-linear semiorthogonal decomposition when $d \le p_a - 1$.

\subsection{Moduli of linear series of generalized divisors}
\label{subsec:Abel:Gdl}
We now return to the general setting of a family $X/S$ of integral curves as in \S\ref{subsec:Abel:integral.curves}, and consider Grassmannian-type generalizations of the Abel maps, as in Theorem~\ref{thm:SOD:GExt}. 

Given an integer $\ell \geq -1$, we define the moduli stack of linear series of generalized divisors of degree $d$ and dimension $\ell$, together with the associated generalized Abel map, by
\[
\mathbf{A}^{d,\ell} := \mathbf{A}^{\mathscr{J}_d, \ell} \colon  \qquad \overline{\mathbf{G}}_{d, X/S}^\ell := \mathbf{G}_{\ell+1}\mathbf{LinSys}_{X/S}^{\mathscr{J}_d} \longrightarrow \overline{\mathrm{Pic}}_{X/S}^d.
\]
Let $\iota$ denote the involution of $\overline{\mathrm{Pic}}_{X/S}$ as in \S\ref{subsec:Abel:integral.curves}. The generalized $\omega$-Abel map is defined as
\[
\mathbf{A}_{\omega}^{d, \ell} \colon \quad
\overline{\mathbf{G}}_{2p_a - 2 - d, X/S}^\ell
\xrightarrow[\sim]{\quad\iota^*\quad}
\mathbf{G}_{\ell+1}\mathbf{Quot}_{\omega/X/S}^{\mathscr{J}_d}
\xrightarrow{~\mathbf{A}_{\omega}^{\mathscr{J}, \ell+1}~}
\overline{\mathrm{Pic}}_{X/S}^d.
\]
The classical truncations of the fibers of $\mathbf{A}_{d,\ell}$ and $\mathbf{A}_{\omega}^{d, \ell}$ are Grassmannian varieties:
\[
\mathrm{Grass}_{\ell+1}\left(H^0(X_s, \mathscr{J}|_{X_s})^\vee\right)
\quad \text{and} \quad
\mathrm{Grass}_{\ell+1}\left(\operatorname{Ext}^0_{X_s}(\mathscr{J}|_{X_s}, \omega_{X_s})^\vee\right),
\]
respectively. Equivalently, these are the Grassmannians parametrizing $\ell$-dimensional projective linear subspaces of \eqref{eqn:fibers:abel}, i.e., 
$\mathbb{P}\left(H^0(X_s, \mathscr{J}|_{X_s})^\vee\right)$ and $\mathbb{P}\left(\operatorname{Ext}^0_{X_s}(\mathscr{J}|_{X_s}, \omega_{X_s})^\vee\right)$, respectively.

If the curve $X_s$ is smooth, then the classical truncation of the fiber of $\overline{\mathbf{G}}_{d, X/S}^\ell$ over a point $s \in S$ recovers the variety $G_d^\ell(X_s)$ of $\ell$-dimensional linear series $g_d^\ell$ of degree $d$ on $X_s$, as studied in~\cite[Chapter~IV]{ACGH}. In particular, the morphism $\overline{\mathbf{G}}_{d, X/S}^\ell \to S$, restricted over the locus where the fibers $X_s$ are smooth, is a derived enhancement of the family of classical varieties of linear series $G_d^\ell$ of \cite[Chapter~IV]{ACGH}, studied in \cite[\S 4.3]{J23}.

More generally, over points $s \in S$ where $X_s$ is Gorenstein, the classical truncation of $\overline{\mathbf{G}}_{d, X/S}^\ell$ over $s$ is the variety $\mathrm{G}_{d}^{\ell}$ of $\ell$-dimensional projective linear subspaces of the complete linear system $|\mathscr{J}|_{X_s}|$ of generalized divisors. This provides a natural generalization of the classical variety of linear series to the setting of generalized divisors in the sense of~\cite{Har86, Har94}.

An immediate consequence of Theorem~\ref{thm:SOD:GExt} is the following:

\begin{corollary}
\label{cor:SOD:Gdl}
\begin{enumerate}
	\item The Abel maps $\mathbf{A}^{d,\ell}$ and  $\mathbf{A}_{\omega}^{d, \ell}$ are quasi-smooth for all $d$ and $\ell$.
	\item Suppose that $S$ is a $\mathbb{Q}$-scheme (i.e., of characteristic zero), and let $\D$ be one of $\Dqc$, $\mathrm{D}^-_{\mathrm{coh}}$, $\mathrm{D}^\mathrm{b}_{\mathrm{coh}}$, or $\mathrm{Perf}$. Then for all $d \geq p_a-1$ and $\ell \geq -1$, there is a semiorthogonal decomposition:
\[
\D\left(\overline{\mathbf{G}}_{d, X/S}^\ell\right)
=
\left\langle
\text{$\binom{1-p_a+d}{j}$ copies of } \D\left(\overline{\mathbf{G}}_{2p_a-2-d, X/S}^{\ell-j}\right)
\,\Big|\, 0 \leq j \leq \ell+1
\right\rangle.
\]
\end{enumerate}
\end{corollary}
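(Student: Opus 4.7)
The plan is a direct application of Theorem~\ref{thm:SOD:abel:GLinSys} with $\mathscr{M} = \overline{\mathrm{Pic}}_{X/S}^d$ and universal object $\mathscr{J} = \mathscr{J}_d$. As reviewed in \S\ref{subsec:Abel:integral.curves}, $\mathscr{J}_d$ is $\mathscr{M}$-flat with fibers rank-$1$ torsion-free (hence maximal Cohen--Macaulay), so $H^j(X_s, \mathscr{J}_d|_{X_s}) = 0$ for $j \neq 0, 1$, and the Riemann--Roch formula gives
\[
r := \operatorname{rank}\mathcal{Q}_{f_d}(\mathscr{O}_{\mathscr{X}}, \mathscr{J}_d) = 1 - p_a + d,
\]
exactly as in the proof of Theorem~\ref{thm:SOD:curve}. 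Assertion~(1) then follows immediately from the quasi-smoothness clause of Theorem~\ref{thm:SOD:abel:GLinSys}, which is unconditional on the sign of $r$ and on characteristic.

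For assertion~(2), the hypothesis $d \geq p_a - 1$ gives $r \geq 0$, so the first case of Theorem~\ref{thm:SOD:abel:GLinSys} (whose applicability requires characteristic zero, hence the $\mathbb{Q}$-scheme assumption on $S$) yields an $\mathscr{M}$-linear semiorthogonal decomposition
\[
\D\big(\overline{\mathbf{G}}_{d, X/S}^\ell\big) = \left\langle \text{$\binom{r}{j}$ copies of } \D\big(\mathbf{G}_{\ell+1-j}\mathbf{Quot}_{\omega/X/S}^{\mathscr{J}_d}\big) \,\Big|\, 0 \leq j \leq \ell+1 \right\rangle.
\]
The remaining task is to identify each $\mathbf{G}_{\ell+1-j}\mathbf{Quot}_{\omega/X/S}^{\mathscr{J}_d}$ with $\overline{\mathbf{G}}_{2p_a-2-d, X/S}^{\ell-j}$. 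The coherent-duality isomorphism in the last clause of Theorem~\ref{thm:SOD:abel:GLinSys} (applicable because $\omega$ is locally truncated for a classical $\mathscr{M}$) gives $\mathbf{G}_{\ell+1-j}\mathbf{Quot}_{\omega/X/S}^{\mathscr{J}_d} \simeq \mathbf{G}_{\ell+1-j}\mathbf{LinSys}_{X/S}^{\mathbb{D}_\omega(\mathscr{J}_d)}$. Since $\iota^*\mathscr{J}_{2p_a-2-d} \simeq \mathbb{D}_\omega(\mathscr{J}_d)$ as established in \S\ref{subsec:Abel:integral.curves}, and the formation of $\mathbf{G}_{\ell}\mathbf{LinSys}$ commutes with base change, pulling back along the involutive isomorphism $\iota$ yields $\mathbf{G}_{\ell+1-j}\mathbf{LinSys}_{X/S}^{\mathbb{D}_\omega(\mathscr{J}_d)} \simeq \iota^*\,\overline{\mathbf{G}}_{2p_a-2-d, X/S}^{\ell-j}$. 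Because $\iota$ is an isomorphism of $S$-schemes, this identification preserves derived categories, and substituting into the decomposition above gives the stated formula.

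No substantial new obstacle is expected to arise: all heavy lifting was done in Theorems~\ref{thm:SOD:abel:LinSys} and~\ref{thm:SOD:abel:GLinSys}, and the verification of hypotheses (relative perfectness of $\mathscr{J}_d$, vanishing of $H^j$ for $j \ne 0,1$, involutive nature of $\mathbb{D}_\omega$ on rank-$1$ torsion-free sheaves) was carried out in \S\ref{subsec:Abel:integral.curves}. The mildly delicate point is the bookkeeping that transports the $\mathbf{Quot}$-factor over $\overline{\mathrm{Pic}}_{X/S}^d$ into a $\mathbf{LinSys}$-factor over $\overline{\mathrm{Pic}}_{X/S}^{2p_a-2-d}$, matching the index shift $\ell \mapsto \ell - j$ and the rank $\binom{1-p_a+d}{j}$ in the stated decomposition; this is precisely the content of the coherent-duality-plus-$\iota$ argument above.
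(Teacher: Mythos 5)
Your proof is correct and follows essentially the same route the paper intends: the paper states the corollary as an "immediate consequence" of Theorem~\ref{thm:SOD:GExt}, and you invoke its curve-specific reformulation Theorem~\ref{thm:SOD:abel:GLinSys} (with $\ell$ there set to $\ell+1$ here), which is the same thing with one layer of packaging. The translation of the $\mathbf{Quot}$-factor over $\overline{\mathrm{Pic}}^d_{X/S}$ into the $\mathbf{LinSys}$-factor over $\overline{\mathrm{Pic}}^{2p_a-2-d}_{X/S}$ via coherent duality and the involution $\iota$ is precisely the identification the paper already builds into its definition of $\mathbf{A}_\omega^{d,\ell}$ in \S\ref{subsec:Abel:Gdl}, so your bookkeeping is just spelling out what the paper leaves implicit.
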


This result generalizes the semiorthogonal decompositions of the author \cite[Corollary 4.5]{J23} from families of smooth curves to families of integral curves. See also \cite{Tod23} for the case of a general smooth curve $C/\mathbb{C}$. Furthermore, when $C$ is a general smooth curve, the moduli stack $\overline{\bG}_{d, C/\CC}^{\ell}$ recovers the classical variety $G_d^\ell(C)$. In this context, Lin and Yu~\cite{LY21} proved that the derived categories $\D(G_{2g-2-d}^{r-i}(C))$ are indecomposable for all $0 \leq i \leq r+1$.

\subsection{Moduli of coherent systems on curves}
\label{subsec:coh.sys}
We return to the general setting of~\ref{subsec:Abel:general}, where $X \to S$ is a flat, locally projective, and finitely presented morphism of schemes, whose geometric fibers are integral curves of arithmetic genus $p_a$.  Assume that $S$ is locally Noetherian.

 For any integers $d \in \mathbb{Z}$ and $\rho > 0$, consider the moduli stack of torsion-free sheaves of rank $\rho$ and degree $d$ on the fibers of $X/S$:
\[
\mathscr{M} := \underline{\mathrm{Coh}}_{X/S}^{\mathrm{tf}}(\rho, d) \to S.
\]
For any $S$-scheme $T$, the $T$-points of $\mathscr{M}$ consist of $T$-flat coherent sheaves $\mathscr{E}_T \in \mathrm{Coh}(X_T)$ such that, for each $t \in T$, the restriction $\mathscr{E}|_{X_t}$ is a torsion-free sheaf of rank $\rho$ and degree $d$ on $X_t$. Here, the degree is defined as $\deg(\mathscr{E}|_{X_t}) := \chi(X_t, \mathscr{E}|_{X_t}) - \chi(X_t, \mathscr{O}_{X_t})$. Thus, the degree $d$ and the integer $r$ in~\eqref{eqn:Abel:r} are related via the Riemann--Roch formula:
\[
r = (1 - p_a)\rho + d.
\]

Let $\mathscr{E} \in \mathrm{Coh}(\mathscr{X})$ denote the universal sheaf.
 For $\ell \ge 0$, we consider the following derived stacks and their ``Abel'' maps:
\begin{align*}
&\mathbf{A}^{\rho, d, \ell} \colon \quad \mathbf{G}_{X/S}(\rho, d, \ell) := \mathbf{GExt}_f^0(\mathscr{O}_{X}, \mathscr{E}; \ell) \to \mathscr{M}, \\
&\mathbf{A}^{\rho, d, \ell}_\omega \colon \quad \mathbf{G}^\omega_{X/S}(\rho, d, \ell) := \mathbf{GExt}_f^0(\mathscr{E}, \omega; \ell) \to \mathscr{M}.
\end{align*}

By Lemma~\ref{lem:GExt:T-points}, for any $S$-scheme $T$, the $T$-points of $\mathbf{G}_{X/S}(\rho, d, \ell)$ and $\mathbf{G}^\omega_{X/S}(\rho, d, \ell)$ parametrize triples $(v, \mathscr{V}, \mathscr{E}_T)$ and $(w, \mathscr{W}, \mathscr{F}_T)$, where $\mathscr{V}$ and $\mathscr{W}$ are vector bundles over $T$ of rank $\ell$, $\mathscr{E}_T$ and $\mathscr{F}_T$ are $T$-flat, torsion-free sheaves of rank $\rho$ and degree $d$ on fibers of $X_T/T$, and 
\[
v \colon f_T^*(\mathscr{V}_T^\vee) \to \mathscr{E}_T, \qquad
w \colon \mathscr{F}_T \to \omega_{X_T/T} \otimes f_T^*(\mathscr{W}_T)
\]
are homomorphisms in $\Dqc(X_T)$ such that, for each $t \in T$, the induced maps on the fiber $X_t$ yield injective maps
\[
\kappa(t)^{\oplus \ell} \hookrightarrow H^0(X_t, \mathscr{E}|_{X_t}) \quad \text{and} \quad \kappa(t)^{\oplus \ell} \hookrightarrow \operatorname{Hom}_{X_t}(\mathscr{F}|_{X_t}, \omega_{X_t}) \simeq H^1(X_t, \mathscr{E}|_{X_t})^\vee.
\]
In particular, the functor of points over a field $s\colon \Spec\kappa(s) \to S$ assigns to each such $s$ pairs $(E, V)$ and $(F, W)$, where $E$ and $F$ are torsion-free sheaves of rank $\rho$ and degree $d$ on $X_s = X \times_S \{s\}$, and $V \subseteq H^0(X_s, E)$ and $W \subseteq H^1(X_s, F)^\vee$ are linear subspaces of dimension~$\ell$.

Thus, the classical truncations of these moduli spaces recover the moduli of coherent systems (or Brill--Noether pairs), extensively studied by many authors, see, for example, \cite{KN95}. 

In the case $\ell = 1$, we use the notation:
\[
\mathbf{Pair}_{X/S}(\rho, d) := \mathbf{PHom}_f(\mathscr{O}_X, \mathscr{E}) \to \mathscr{M}, \qquad
\mathbf{Pair}_{X/S}^{\omega}(\rho, d) := \mathbf{PHom}_f(\mathscr{E}, \omega) \to \mathscr{M}.
\]
In this case, their functors of points over a field classify Thaddeus pairs \cite{Tha, KT} $(E, v)$ and $(F, w)$ respectively, where $E$ and $F$ are torsion-free sheaves on $X_s$, and $v \in H^0(X_s, E)$, $w \in \operatorname{Hom}_{X_s}(F, \omega)$ are nonzero sections, considered up to scalar multiplication. 

Theorems~\ref{thm:SOD:abel:LinSys} and~\ref{thm:SOD:GExt} yield the following:

\begin{corollary}
\label{cor:SOD:coh.sys}
Let $\D$ be one of $\Dqc$, $\mathrm{D}^-_{\mathrm{coh}}$, $\mathrm{D}^\mathrm{b}_{\mathrm{coh}}$, or $\mathrm{Perf}$.
\begin{enumerate}
    \item The maps $\mathbf{A}^{\rho, d, \ell}$ and $\mathbf{A}^{\rho, d, \ell}_\omega$ are quasi-smooth for all $\rho>0$, $d \in \mathbb{Z}$ and $\ell \ge 0$.
    \item For all $d \geq (p_a-1)\rho$, there is an $\mathscr{M}$-linear semiorthogonal decomposition:
    \[
    \mathrm{D}(\mathbf{Pair}_{X/S}(\rho, d)) =
    \left\langle
        \mathrm{D}(\mathbf{Pair}_{X/S}^{\omega}(\rho, d)),\ 
        \text{$((1-p_a)\rho+d)$ copies of } \mathrm{D}(\mathscr{M})
    \right\rangle.
    \]
    \item If $S$ is a $\mathbb{Q}$-scheme, then for all $d \geq (p_a-1)\rho$ and $\ell \geq 0$, there is an $\mathscr{M}$-linear semiorthogonal decomposition:
    \[
    \D\left(\mathbf{G}_{X/S}(\rho, d, \ell)\right) =
    \left\langle
        \text{$\binom{(1-p_a)\rho+d}{j}$ copies of }
        \D\left(\mathbf{G}^\omega_{X/S}(\rho, d, \ell-j)\right)
        \,\Big|\, 0 \leq j \leq \ell
    \right\rangle.
    \]
\end{enumerate}
\end{corollary}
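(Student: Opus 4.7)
The plan is to recognize Corollary~\ref{cor:SOD:coh.sys} as a direct application of the general Abel map framework of Theorems~\ref{thm:SOD:abel:LinSys} and~\ref{thm:SOD:abel:GLinSys}, specialized to $\mathscr{M} = \underline{\mathrm{Coh}}_{X/S}^{\mathrm{tf}}(\rho, d)$ with the universal object taken to be the universal torsion-free coherent sheaf $\mathscr{E}$ on $\mathscr{X} = \mathscr{M} \times_S X$ (playing the role of $\mathscr{J}$ in \S\ref{subsec:Abel:general}). The substantive work has already been done in earlier sections, so the remaining task is to verify that this choice satisfies the hypotheses of \S\ref{subsec:Abel:general} and to translate notation.

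My first step is to check the input hypotheses. By construction of $\mathscr{M}$, the universal sheaf $\mathscr{E}$ is $\mathscr{M}$-flat and coherent, hence almost perfect with relative Tor-amplitude $[0,0]$ over $\mathscr{M}$; in particular it is relatively perfect in the sense of Definition~\ref{def:relative.Tor}. The fiberwise vanishing $H^j(X_s, \mathscr{E}|_{X_s}) = 0$ for $j \notin \{0,1\}$ is automatic since every $X_s$ is one-dimensional. Applying Riemann--Roch to a rank-$\rho$ torsion-free sheaf of degree $d$ on the integral curve $X_s$ of arithmetic genus $p_a$ yields
\[
r = \chi(X_s, \mathscr{E}|_{X_s}) = (1 - p_a)\rho + d,
\]
a globally constant function on $\mathscr{M}$. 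By Corollary~\ref{cor:Ext:fiberwise}, $\mathcal{Q}_f(\mathscr{O}_{\mathscr{X}}, \mathscr{E})$ is then perfect of Tor-amplitude $[0,1]$ and of rank $r$, and the range hypothesis $d \geq (p_a-1)\rho$ in the corollary is precisely $r \geq 0$.

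Next, I would unwind notation. Directly from Definition~\ref{defn:abel:LinSys} and the definitions in \S\ref{subsec:coh.sys}, one has the identifications
\[
\mathbf{G}_{X/S}(\rho, d, \ell) = \mathbf{G}_{\ell}\mathbf{LinSys}_{X/S}^{\mathscr{E}}, \qquad
\mathbf{G}^{\omega}_{X/S}(\rho, d, \ell) = \mathbf{G}_{\ell}\mathbf{Quot}_{\omega/X/S}^{\mathscr{E}},
\]
together with $\mathbf{Pair}_{X/S}(\rho,d) = \mathbf{LinSys}_{X/S}^{\mathscr{E}}$ and $\mathbf{Pair}_{X/S}^{\omega}(\rho,d) = \mathbf{Quot}_{\omega/X/S}^{\mathscr{E}}$. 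Given these dictionaries, part~(1) is immediate from the quasi-smoothness assertions of Theorem~\ref{thm:SOD:abel:LinSys}(1) (for $\ell = 1$) and Theorem~\ref{thm:SOD:abel:GLinSys} (for arbitrary $\ell$), neither of which imposes restrictions on $d$ or on the characteristic.

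Parts~(2) and~(3) are then the $r \geq 0$ cases of Theorems~\ref{thm:SOD:abel:LinSys}(2) and~\ref{thm:SOD:abel:GLinSys}, respectively. The $\mathbb{Q}$-scheme hypothesis in~(3) appears because the derived Grassmannian semiorthogonal decomposition of~\cite{J23} underlying Theorem~\ref{thm:SOD:GExt} requires characteristic zero, whereas the projectivization input of Theorem~\ref{thm:SOD:PExt} used in Theorem~\ref{thm:SOD:abel:LinSys} does not. There is no genuine obstacle here; the only routine bookkeeping is to note that~(3) at $\ell = 1$ recovers~(2), since $\binom{r}{0} = 1$ copy of $\mathrm{D}(\mathbf{G}^{\omega}_{X/S}(\rho, d, 1)) = \mathrm{D}(\mathbf{Pair}_{X/S}^{\omega}(\rho, d))$ together with $\binom{r}{1} = r$ copies of $\mathrm{D}(\mathbf{G}^{\omega}_{X/S}(\rho, d, 0)) = \mathrm{D}(\mathscr{M})$ reproduces the $\ell=1$ decomposition, and that the $r=0$ cases degenerate to the derived equivalences of Theorems~\ref{thm:SOD:abel:LinSys}(2)(b) and~\ref{thm:SOD:GExt}(2).
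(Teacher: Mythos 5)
Your proposal takes the same route as the paper: the paper's own ``proof'' is simply the sentence ``Theorems~\ref{thm:SOD:abel:LinSys} and~\ref{thm:SOD:GExt} yield the following,'' and you are correctly unpacking why those theorems apply. Your verification of the hypotheses of \S\ref{subsec:Abel:general} for $\mathscr{J} = \mathscr{E}$, the Riemann--Roch identification $r = (1-p_a)\rho + d$, and the notational dictionary between $\mathbf{G}_{X/S}(\rho,d,\ell)$ and $\mathbf{G}_\ell\mathbf{LinSys}_{X/S}^{\mathscr{E}}$ are all correct, as is the observation that $\ell = 1$ of~(3) recovers~(2).

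One citational imprecision worth noting: for part~(1), you cite Theorem~\ref{thm:SOD:abel:GLinSys} for the general-$\ell$ quasi-smoothness and assert that it ``imposes no restriction on the characteristic.'' This is not how that theorem is stated --- Theorem~\ref{thm:SOD:abel:GLinSys} carries a standing hypothesis that $X \to S$ are schemes over a field of characteristic zero, which governs \emph{every} assertion in that theorem, including quasi-smoothness. Your underlying claim is nonetheless true, because the quasi-smoothness of $\mathbf{GExt}_f^0 \to \mathscr{M}$ is a formal consequence of $\mathcal{Q}_f(\mathscr{O}_{\mathscr{X}}, \mathscr{E})$ being perfect of Tor-amplitude $[0,1]$ and needs no characteristic assumption; this is exactly what Theorem~\ref{thm:SOD:GExt}(1) (equivalently, the unnumbered lemma in \S\ref{subsec:GExt}) says, and that is the reference the paper itself uses. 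So the fix is simply to cite Theorem~\ref{thm:SOD:GExt} rather than Theorem~\ref{thm:SOD:abel:GLinSys} when establishing part~(1); with that substitution, your proof matches the paper's.
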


Specializing to the case where $X/S$ is a smooth projective curve $C/\mathbb{C}$, $\mathrm{gcd}(\rho, d) = 1$ and $\ell = 1$, the semiorthogonal decomposition in~(1), when restricted to the open substack $\mathcal{B}un_{C}^{\mathrm{st}}(\rho, d) \subseteq \mathscr{M}$ of stable vector bundles, recovers a result previously obtained by Koseki and Toda~\cite{KT}.

Thus, our result extends the semiorthogonal decompositions for Thaddeus pair moduli spaces for stable vector bundles on smooth curves to the broader context of higher-dimensional coherent systems and torsion-free sheaves on families of integral curves.


\end{document}